\documentclass[11pt]{article}
\usepackage{amssymb,amsbsy,amsmath,amsfonts,amssymb,amscd,amsthm}
\usepackage{fullpage}
\usepackage{abstract}
\usepackage{mathabx}
\usepackage{mathrsfs}
\usepackage{bm}
\usepackage{enumitem}
\numberwithin{equation}{section}
\usepackage[T1]{fontenc}
\usepackage[toc,page]{appendix}
\usepackage{url}
\usepackage{courier}
\usepackage{easyReview}
\usepackage{mathtools}
\usepackage{xcolor}
\graphicspath{{./}{figures/}} 
\usepackage{tikz}
\usepackage{diagbox}
\usepackage{bm}
\usepackage{soul}
\usepackage{float}
\usetikzlibrary{matrix}
\usepackage{algorithm}
\usepackage{algpseudocode}
\numberwithin{algorithm}{section}
\usepackage{mathtools}
\usepackage{xcolor}
\graphicspath{{./}{figures/}} 
\usepackage{tikz}

\usepackage{aliascnt}
\newtheorem{theorem}{Theorem}[section]
\newaliascnt{lemma}{theorem}
\newtheorem{lemma}[lemma]{Lemma}
\aliascntresetthe{lemma}
\newaliascnt{proposition}{theorem}
\newtheorem{proposition}[proposition]{Proposition}
\aliascntresetthe{proposition}
\newaliascnt{corollary}{theorem}
\newtheorem{corollary}[corollary]{Corollary}
\aliascntresetthe{corollary}
\newaliascnt{remark}{theorem}
\newtheorem{remark}[remark]{Remark}
\aliascntresetthe{remark}
\newaliascnt{definition}{theorem}
\newtheorem{definition}[definition]{Definition}
\aliascntresetthe{definition}
\usepackage{hyperref}
\usepackage[capitalise]{cleveref}
\crefname{equation}{}{}
\Crefname{lemma}{Lemma}{Lemmas}
\Crefname{proposition}{Proposition}{Propositions}

\newcommand{\lc}{\left(}
\newcommand{\rc}{\right)}

\def\su{{\mathsf{u}}}
\def\sv{{\mathsf{v}}}

\newcommand{\eps}{\varepsilon}

\newcommand{\ux}{{\underline x}}

\newcommand{\beq}{\begin{equation}}
\newcommand{\eeq}{\end{equation}}

\newcommand{\g}{\gamma}

\newcommand{\p}{\psi}

\theoremstyle{plain}

\def\sideremark#1{\ifvmode\leavevmode\fi\vadjust{
		\vbox to0pt{\hbox to 0pt{\hskip\hsize\hskip1em
				\vbox{\hsize3cm\tiny\raggedright\pretolerance10000
					\noindent #1\hfill}\hss}\vbox to8pt{\vfil}\vss}}}

\newcommand{\D}{\Delta}
\newcommand{\Dx}{{\Delta x}}

\titlepage
\title{Finite difference methods for a continuous-time heterogeneous agent model with recursive utility}
\author{Yves Achdou\thanks{Universit\'{e} de Paris Cit\'{e} and Sorbonne Universit\'{e}, CNRS, Laboratoire Jacques-Louis Lions,  achdou@ljll.univ-paris-diderot.fr}, \quad Qing Tang\thanks{School of Mathematics and Physics, China University of Geosciences (Wuhan),  tangqingthomas@gmail.com}}
\date{\today}

\begin{document}
\maketitle

\begin{abstract} 
We propose, analyze and test computational methods for solving a continuous-time heterogenous agent model with Epstein-Zin utility. Such recursive utilities allow the model to disentangle between risk aversion and intertemporal substitution. Having discretized the Hamilton-Jacobi-Bellman (HJB) equation arising in the model, we propose a Howard-Newton algorithm for the late resolution preference case, and a Howard-Tarski-Kantorovich algorithm for the early resolution preference case. We prove the convergence of the iterative algorithms. We obtain as a consequence the existence of solutions to the discretized HJB equations. In the late resolution case, we supply a priori estimates between the unique solutions of the continuous and discretized HJB equations. \\
\textbf{Keywords:} Hamilton-Jacobi-Bellman equation, Mean Field Games, recursive utility, finite difference, iterative algorithms
\end{abstract}

	
\section{Introduction}
In this work, we propose, analyze and test computational methods for solving the continuous-time heterogeneous agent (HA) models with recursive utility, recently analyzed by the authors in \cite{Achdou:2026aa}.  The continuous-time formulation of the Aiyagari-Bewley-Huggett models \cite{ljungqvist2018recursive}, classical in recursive macroeconomics, and the related system of partial differential equations can be studied in the light of the mathematical theory of Mean Field Games (cf. \cite{MR2679575,achdou2021mean,MR2295621}). Such models involve a large number of \textit{ex ante} identical but \textit{ex post} heterogeneous agents in an incomplete market setting: each agent continuously optimizes consumption subject to a borrowing constraint and idiosyncratic income risk, resulting in a non-degenerate stationary wealth distribution (cf. \cite[(1.1)]{Achdou:2026aa}). The Mean Field Game system consists of Hamilton--Jacobi--Bellman (HJB) equations and Fokker--Planck--Kolmogorov (FPK) equations coupled through equilibrium conditions on prices. HA models have now become indispensable for quantifying the distributional effects of fiscal and monetary policy. A finite difference method for the constant relative risk aversion (CRRA) utility case was developed in \cite{achdou2022income} and its numerical appendix. This method from \cite{achdou2022income} has then become popular in the HA macro literature, as the first step for computing stationary equilibria \cite{ahn2018inequality,bilal2023solving,fernandez2023financial,gu2024global,kaplan2018monetary}. The present paper aims at a rigorous analysis of a related finite difference method for models with recursive utility (Epstein-Zin). In particular, we propose convergent iterative algorithms for solving the discretized Hamilton-Jacobi-Bellman equations. \par
The Epstein-Zin recursive utility is defined as follows (cf. \cite{wang2016optimal}):
       \begin{equation}
         \label{EZ}
       f(c,v)=\frac{\rho}{1-\p^{-1}}\frac{c^{1-\p^{-1}}-((1-\g)v)^\theta}{((1-\g)v)^{\theta-1}},\qquad \theta=\frac{1-\p^{-1}}{1-\g},
       \end{equation}
       where $c$ is the consumption, $v$ is the value function of an optimal strategy and $\rho$ is the subjective discount rate. It is assumed that $\p$, the elasticity of intertemporal substitution (EIS) and $\g$, the risk aversion parameter, are both positive and do not take the value $1$. A key feature of the Epstein-Zin utility \eqref{EZ} is the separation between risk aversion and EIS. The time-additive separable CRRA utility is a special case of recursive utility in which $\g=\psi^{-1}$.  The attitude towards the timing of the resolution of uncertainty is pinned down by the constant $\g \p$: early (resp. late) resolution is preferred if $\g \p> (resp. <) 1$, cf. \cite{EZ1989}. With $\g=\psi^{-1}$, the agent is indifferent to the timing of uncertainty resolution.\par
The Mean Field Game system of partial differential equations for an Aiyagari model with the recursive utility \cref{EZ} is (cf. \cite[(1.7)]{Achdou:2026aa}):
\begin{equation}\label{MFG}
\left\{\begin{aligned}
\quad &(i)\qquad &&0 = \max_{c\ge 0} \left\{ f(c,v_j) +(r^*x+y_j - c)Dv_j(x) \right\}+\lambda_j(v_{\bar \jmath}(x)-v_j(x)),\\
&&&c_{j}^*(x)=\mathop{\arg \max}\limits_{c\ge 0} \left\{ f(c,v_j) +(rx+y_j - c)Dv_j(x) \right\},\\
&(ii)&&- \frac{\partial}{\partial x} \left[ (r^*x+y_j - c_{j}^*(x)) g_j(x) \right]+\lambda_{\bar \jmath}g_{\bar \jmath}(x) -\lambda_jg_j(x)=0,\\
&&&\int_{x\geq \ux} g_1(x)dx+\int_{x\geq \ux} g_2(x)dx+\sum_{j\in \{1,2\}}\mu_j=1,\\
&(iii_A)\qquad &&r^*=A\alpha \left(\frac{\mathcal{K}[r^*]}{N}\right)^{\alpha-1}-\delta=A\alpha \left(\frac{K[m^{(r^*)}]}{N}\right)^{\alpha-1}-\delta. 
\end{aligned}\right.
\end{equation}
The HJB equations \cref{MFG} $(i)$, one for each $j$ ($j\in \{1,2\}$ and $\bar\jmath=3-j$) are coupled through the  switching terms $\lambda_j(v_{\bar\jmath} - v_j)$, encoding the transitions between income states with Poisson rates $\lambda_j$. The state constraint $x \geq \ux$ (a borrowing limit) is imposed to prevent negative savings at $\ux$: $r^*\ux+y_j - c^*_j(\ux)\geq 0$. Each of the stationary FPK equations \cref{MFG} ($ii$) describes the density $g_j$ of the invariant measure $m_j$ associated with the agents with income $y_j$. The measure $m_j$ possibly exhibits a Dirac mass at $\ux$, weighted by $\mu_j$. The equilibrium interest rate $r^*$ is determined by the fixed-point condition \cref{MFG} $(iii_A)$, where the aggregate capital stock $\mathcal{K}[r^*]=K[m^{(r^*)}]=\sum_j\int_{x\geq \ux}xdm^{(r^*)}_j$ is itself a functional of the invariant measure (which depends on $r^*$), and thus closes the model. Although the {\textit{Mean Field}} coupling \cref{MFG} $(iii_A)$ is particular, the analysis developed below applies to a wide range of HA models.
 \par
 To solve the state constrained HJB equation ($i$) with a given $r>0$, we use monotone (upwind) finite difference schemes, as in \cite{MR2679575,achdou2022income,camilli2025learning,lauriere2023policy,tang2023learning}. The main difficulties in this model are: firstly, the Hamiltonian takes infinite values for negative momentum; secondly (this is specific to the recursive utility) the dependence of $f$ on $v$. The second difficulty implies that one should use iterative algorithms that differs from the standard Howard algorithms used for example in \cite{achdou2022income,camilli2025learning}. 
A main discovery of the present paper is that different strategies are suitable depending on the timing preference in the models. In the late resolution case $\theta\geq 1$, the system satisfies a comparison principle, and we use a Howard-Newton algorithm to solve for the unique solution. In this method, the policy evaluation step consists of an inner loop of Newton iterations. In the early resolution case $0<\theta< 1$, we consider a solution to the HJB equation as a fixed point of a suitably defined map $\Gamma$, and take advantage of the monotonicity and invariance properties of $\Gamma$ to design a Howard-Tarski-Kantorovich algorithm. We have decided to name the method so because the design of the outer loop is reminiscent of Tarski-Kantorovich fixed point theorem and its proof in \cite[Theorem I, p.68]{kantorovitch1939method}.  We prove the convergence of these iterative algorithms, and this also implies the existence of solutions to the discretized HJB equations. The Fokker-Planck-Kolmogorov equations will then be discretized with a finite difference scheme, exploiting the duality structure of \cref{MFG} (cf. \cite{achdou2022income}). \par
The methodology for studying the case $0<\theta< 1$ in the present paper is related to the recent literature about dynamic programming on ordered spaces and Koopman operators, e.g. \cite{bloise2024not,Sargent_Stachurski_2025,Sargent2025}. In these works, the Tarski-Kantorovich fixed point theorem have been used to study discrete time Bellman equations with recursive utility. In the present paper, we show that the barriers and invariance properties, essential for using this method, can be constructed for the upwind schemes that we propose. \par
We analyze the convergence of the numerical solution to the constrained viscosity solution of the HJB equation, in the case of $\theta\geq 1$. Recently, a Semi-Lagrangian method was proposed in \cite{camilli2025semi} to study continuous-time HA models with CRRA utility, with convergence analysis based on the Barles-Souganidis half-relaxed limits method. In the present paper, we obtain error estimates with a technique that  does not require doubling the variables, thanks to the strong regularity of the solutions to \cref{MFG} $(i)$. 
\section{Preliminaries}
The assumptions that follow will be made in the whole paper:
\beq\label{asp1}
\g>1,\,\,0<\p<1,\quad \rho> r>0,\quad y_2>y_1>0,\quad \rho \ux+y_1>0.
\eeq
It has been proved in \cite{Achdou:2026aa} that the equilibrium interest rate $r^*$ is smaller than $\rho$, if there exists a solution to \cref{MFG}. This justifies the bounds on $r$ in \cref{asp1}. With the constant $b$ defined in \cref{tab1} below, \cref{asp1} yields $r<b<\rho$, $rx+y_2>rx+y_1>0$ and $b(x+y_1/r)>0$. Therefore, the functions in \cref{Eq: sub super} below are well defined. \par  
With Epstein-Zin utility \eqref{EZ}, we can rewrite the HJB equation \cref{MFG} (i) 
	\beq\label{HJB}
	\begin{aligned}
	\frac{\rho}{\theta} v_j(x)=	H(x,y_j,v_j,Dv_j)+\lambda_j(v_{\bar \jmath}(x)-v_j(x)),
	\end{aligned}
	\eeq
	where we use the notation
	\beq\label{EZ flow}
	f(c,v_j)=\mathcal{F}(c,v_j)-\frac{\rho}{\theta} v_j,\qquad \mathcal{F}(c,v_j)=\frac{\rho}{1-\p^{-1}}\frac{c^{1-\p^{-1}}}{((1-\g)v_j)^{\theta-1}},
	\eeq
\beq\label{Eq: H from F}
 H(x,y_j,v_j,p):=\max_{c\ge 0} \left\{ \mathcal{F}(c,v_j) +(rx +y_j- c)p\right\}.
\eeq
From the first order necessary optimality condition, the Hamiltonian in \Cref{Eq: H from F} is
	\begin{equation}\label{def H}
  H(x,y,v,p)=\left\{
      \begin{aligned}
     \quad  (rx +y)p+\frac{\rho^{\p}}{\p-1} p^{1-\p}((1-\g)v)^{\frac{1-\g \p}{1-\g}},\quad \quad & \text{if }\quad p\geq 0,\\
       +\infty, \quad \quad  & \text{if }\quad p< 0.
     \end{aligned}
   \right.
\end{equation}
We observe that $H(x,y,v,p)$ defined in \Cref{def H} is strictly convex in $p$ for fixed $(x,y,v)$ with $p>0$, and that
\beq\label{H min}
\min_{p>0}H(x,y,v,p)=\rho\frac{(rx+y)^{1-\p^{-1}}}{1-\p^{-1}}\big((1-\g)v\big)^{\frac{\p^{-1}-\g}{1-\g}}.
\eeq
Moreover, since $rx+y>0$ and $p^{1-\p}$ is sublinear, we infer from \Cref{def H} the coercivity property
\beq\label{H coerc}
H(x,y,v,p)\to +\infty \quad \text{when}\quad  p\to +\infty.
\eeq
The optimal consumption (away from the borrowing limit) is given by the first order necessary optimality condition, whenever $Dv_j >0$, 
	\beq\label{c optimal}
	c_j=\mathop{\arg\max}\limits_{c\geq 0} \left\{ \mathcal{F}(c,v_j) -cDv_j \right\}=\rho^{\p}(Dv_j)^{-\p}((1-\g)v_j)^{\frac{1-\g \p}{1-\g}}.
	\eeq

We summarize the notation in \Cref{tab1}. 
\begin{table}[!h]
    \centering
    \caption{Symbols}
    \label{tab1}
    \begin{tabular}{l c c}
        \hline
        Subjective discount factor & $\rho>0$ \\
        Risk aversion  & $\gamma>1$  \\
        Elasticity of intertemporal substitution (EIS) &$0<\psi<1$\\
        A parameter arising in Epstein-Zin utility &$\theta:=\frac{1-\p^{-1}}{1-\g}>0$\\
        A convenient parameter &$b:= \rho\left[\frac{r+\p(\rho-r)}{\rho}\right]^{\frac{1}{1-\p}}$\\
        Aggregator & $f(c,v)$\\
        Modified aggregator & $ \mathcal{F}(c,v)=f(c,v)+\frac{\rho}{\theta}v$\\
        Borrowing limit & $\ux$ \\
       \hline
    \end{tabular}
\end{table}\par
For any $c>0$, if $\theta > 1$ (resp. $0<\theta<1$) then $\mathcal{F}(c,v)$ is decreasing (resp. increasing) in $v$, i.e. 
\beq\label{F monotone}
\mathcal{F}_v(c,v)< 0\quad (resp. \,\,\,\mathcal{F}_v(c,v)>0).
\eeq
If $\theta > 1$, then $f(c,v)$ and $\mathcal{F}(c,v)$ are jointly concave in $(c,v)\in (0,+\infty)\times (-\infty,0)$.\par
We denote by $H_v(x,y,v,p)$ and $H_{vv}(x,y,v,p)$ the first and second order derivatives of $H(x,y,v,p)$ with respect to $v$. We use $H_{vp}(x,y,v,p)$ to denote the cross second order derivative of $H(x,y,v,p)$ with respect to $v$ and $p$.  Straightforward computations lead to:
\beq\label{Eq:Hv>0}
\begin{aligned}
\quad &H_v(x,y,v,p)<0,\,\,  H_{vp}(x,y,v,p)< 0,\,\,  H_{vv}(x,y,v,p)< 0\,\, {\text{if}}\,\,\theta > 1,\\
&H_v(x,y,v,p)> 0,\,\,  H_{vp}(x,y,v,p)>0,\,\,  H_{vv}(x,y,v,p)>0\,\, {\text{if}}\,\,0<\theta<1.
\end{aligned}
\eeq
Therefore, $H(x,y,v,p)$ is strictly concave (resp. convex) in the $v$ variable if $\theta > 1$ (resp. $0<\theta<1$). \par
\begin{definition}\hfill\label{def vis sol}
	\begin{itemize}
		\item[1.] A continuous function $v=(v_1,v_2)$ is said to be a viscosity subsolution of \eqref{HJB} at $x$, if whenever $\varphi$ is a smooth function and $v_j-\varphi$ has a local maximum at $x$, then
	\[\frac{\rho}{\theta} v_j(x)\le H(x,y_j,v_j(x),D\varphi(x) )+\lambda_j (v_{\bar \jmath}(x)-v_j(x)).\]	
		\item[2.] A continuous function $v=(v_1,v_2)$ is said to be a viscosity supersolution of \eqref{HJB} at $x$, if whenever $\varphi$ is a smooth function and $v_j-\varphi$ has a local  minimum at $x$, then
	\[\frac{\rho}{\theta} v_j(x)\ge H(x,y_j,v_j(x),D\varphi(x) )+\lambda_j (v_{\bar \jmath}(x)-v_j(x)).\]	
	\item[3.] 	A continuous function $v$ is said to be a constrained viscosity solution to  system \eqref{HJB} if $v$ is a viscosity supersolution in $(\ux,\infty)$ and a viscosity subsolution in $[\ux,\infty)$.
	\end{itemize}
\end{definition}
The sub- and supersolutions proposed below will play an important role in what follows. The following result has been proved in \cite[Proposition 3.5]{Achdou:2026aa} for the case $\theta \geq 1$. We observe that the same proof holds for $0<\theta<1$.  
\begin{proposition}\label{Prop:sub-super}
With the constant $b$ defined in \cref{tab1}, we consider the functions 
\beq\label{Eq: sub super}
\begin{aligned}
\check{\su}_1(x)=\check{\su}_2(x)=\frac{(rx+y_1)^{1-\gamma}}{1-\gamma},\quad
 \check{\sv}_1(x)=\check{\sv}_2(x)= \frac{(b(x+y_2/r))^{1-\gamma}}{1-\gamma}.
\end{aligned}
\eeq
 The pairs $(\check{\su}_1,\check{\su}_2)$ and $ (\check{\sv}_1,\check{\sv}_2)$ are respectively a subsolution of \eqref{HJB} in $[\ux,+\infty)$ and a supersolution of \eqref{HJB} in $(\ux,+\infty)$.
\end{proposition}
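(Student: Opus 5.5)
Both pairs in \eqref{Eq: sub super} have equal components, so the switching terms $\lambda_j(v_{\bar\jmath}-v_j)$ vanish. Both functions are also smooth, increasing and negative on $[\ux,+\infty)$. The plan is therefore to check the scalar inequalities pointwise, for $j=1,2$, using the explicit form \eqref{def H} of $H$ and $\frac{\rho}{\theta}=\frac{\rho\p(1-\g)}{\p-1}$. At interior points a smooth function is a viscosity sub/supersolution as soon as it is a classical one, since $D\varphi(x)=Dv(x)$ there. The sign of $\theta-1$ will play no role, since only the algebraic identities below are used. This is why the argument of \cite[Proposition 3.5]{Achdou:2026aa} carries over to $0<\theta<1$.

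\emph{Subsolution.} I would not evaluate $H$ at the optimal $c$. Instead I would use the definition \eqref{Eq: H from F} as a maximum and plug in the feasible choice $c=rx+y_1$. For any $p\ge 0$, with $\check{\su}=\check{\su}_j$, this gives
\[
H(x,y_j,\check{\su}(x),p)\ \ge\ \mathcal{F}(rx+y_1,\check{\su}(x))+(y_j-y_1)p\ \ge\ \mathcal{F}(rx+y_1,\check{\su}(x)).
\]
Since $(1-\g)\check{\su}=(rx+y_1)^{1-\g}$, the exponents combine as $1-\p^{-1}+(1-\g)(1-\theta)=1-\g$. Hence $\mathcal{F}(rx+y_1,\check{\su})=\frac{\rho\p}{\p-1}(rx+y_1)^{1-\g}=\frac{\rho}{\theta}\check{\su}$, which is the subsolution inequality (with equality). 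This works for every admissible test slope $p=D\varphi(x)\ge 0$ simultaneously. It thus handles the boundary point $x=\ux$ as well, where $D\varphi(\ux)\ge D\check{\su}(\ux)>0$ is only known one-sidedly, and negative $p$ give $H=+\infty$ anyway. This is the step I expected to be delicate, because $H$ need not be monotone in $p$ near $D\check{\su}$. Avoiding the optimal $c$ removes the issue.

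\emph{Supersolution.} This is only needed in the open set $(\ux,+\infty)$, so the classical check suffices. Set $z=b(x+y_2/r)>0$, so that $D\check{\sv}=b z^{-\g}>0$ and $(1-\g)\check{\sv}=z^{1-\g}$. From \eqref{def H}, $p^{1-\p}((1-\g)\check{\sv})^{\frac{1-\g\p}{1-\g}}=b^{1-\p}z^{1-\g}$. Moreover $rx+y_j\le rx+y_2=rz/b$, and this first term enters with the positive factor $p$. Therefore
\[
H(x,y_j,\check{\sv},D\check{\sv})\le \Big(r+\frac{\rho^{\p}b^{1-\p}}{\p-1}\Big)z^{1-\g}.
\]
It remains to compare the constant with $\frac{\rho\p}{\p-1}$. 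Multiplying by $\p-1<0$, this amounts to $\rho^\p b^{1-\p}\ge \rho\p+r(1-\p)$. By the definition of $b$ in \cref{tab1}, $\rho^\p b^{1-\p}=\rho\p+r(1-\p)$ exactly, so the inequality holds with equality in the constant. The supersolution property then follows, with the only slack coming from $y_j\le y_2$.
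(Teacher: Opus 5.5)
Your proof is correct and takes essentially the same route as the paper: its discrete counterpart (\cref{Prop:sub-super disc}) also obtains the subsolution inequality from the feasible null-saving consumption $c=rx+y_1$, and obtains the supersolution inequality from the exact identity $\frac{\rho}{\theta}\check{\sv}_2=H(x,y_2,\check{\sv}_2,D\check{\sv}_2)$ together with $y_1<y_2$. The one difference is minor: for $j=2$ the paper uses $c=rx+y_2$ and the monotonicity of $\mathcal{F}$ in $c$, whereas you use nonnegative savings times $p\ge 0$; both work, and your observation that the subsolution bound holds for every slope $p$ is what makes the boundary point $\ux$ unproblematic.
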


We next state a comparison principle for viscosity sub- and supersolutions of \eqref{HJB}, in the late resolution case. It follows directly from \cite[Proposition 3.3]{Achdou:2026aa}.
\begin{proposition}\label{comparison}
Assume $\theta \geq 1$, $\mathsf{u}=(\mathsf{u}_1,\mathsf{u}_2)$ and $\mathsf{v}=(\mathsf{v}_1,\mathsf{v}_2)$ are bounded viscosity sub- and supersolution of system \eqref{HJB}.We extend $\mathbf{v}_j$ at $\ux$ by setting 
$
\mathsf{v}_j(\ux)=\lim_{\substack{z\rightarrow \ux,\, z>\ux}} \mathsf{v}_j(z) .
$
Then $\mathsf{u}\leq \mathsf{v}$ in $[\ux,+\infty)$, i.e. $\mathsf{u}_j\le \mathsf{v}_j$ in $[\ux,+\infty)$ for $j=1,2$.
\end{proposition}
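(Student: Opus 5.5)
The plan is to prove the comparison principle for viscosity sub- and supersolutions of \eqref{HJB} when $\theta\ge 1$ by the classical doubling-of-variables argument, adapted to the state constraint at $\ux$ and to the monotone dependence of $H$ on $v$ recorded in \eqref{Eq:Hv>0}. Since the paper says the statement follows from \cite[Proposition 3.3]{Achdou:2026aa}, the plan below reproduces that route.

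\emph{Step 1: reduction to strict separation.} Arguing by contradiction, suppose $M:=\max_j\sup_{[\ux,\infty)}(\su_j-\sv_j)>0$. Because $\theta\ge1$, $v\mapsto -\frac{\rho}{\theta}v+H(x,y,v,p)-\lambda_j v$ is strictly decreasing, and $\lambda_j(v_{\bar\jmath}-v_j)$ is a cooperative coupling. This yields properness of the system and the usual index selection: the argument is run at the index $j$ and the point where $\su_j-\sv_j$ is (nearly) maximal. There, $\su_{\bar\jmath}-\sv_{\bar\jmath}\le \su_j-\sv_j$, so the coupling terms have the good sign.

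\emph{Step 2: localization.} Since the domain is unbounded and the functions are only bounded, I would subtract a penalization $\eps\langle x\rangle$ with a small slope. Because $H$ is increasing in $p$ for large $p$ and $(rx+y)p$ is linear, a small perturbation of the gradient produces an error of order $\eps(rx+y)$. To control this, I would rather use $\eps\log(1+x)$, whose gradient times $(rx+y)$ stays bounded by $C\eps$.

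\emph{Step 3: handling the state constraint.} I would double variables with the Soner-type shifted test function
\[
\Phi(x,z)=\su_j(x)-\sv_j(z)-\frac{|z-x-\eta\delta|^2}{2\delta^{2}}\cdot\delta-\eps\log(1+x),
\]
more precisely $\frac{1}{\delta}\bigl|\frac{z-x}{\delta}-\eta\bigr|^2$-type terms, chosen so that the maximizing $z$ lies in the open set $(\ux,\infty)$ where $\sv$ is a supersolution. Here $\sv_j(\ux)$ is defined as the limit from the right, as in the statement. The existence of a direction $\eta>0$ entering the domain, together with the fact that the dynamics $rx+y_j-c$ can point inward (from $rx+y_j>0$ in \eqref{asp1}), makes this possible. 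A typical difficulty here is that $\su$ needs some continuity at the boundary. Since $\su$ is a subsolution up to $\ux$ with coercive $H$ \eqref{H coerc}, it is Lipschitz, and the standard argument closes.

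\emph{Step 4: comparing the inequalities.} Write the subsolution inequality at $x$ and the supersolution inequality at $z$ with the same momentum $p$ up to $O(\eps)$ errors, and subtract. The $v$-monotonicity gives a positive term of at least $c\,(\su_j(x)-\sv_j(z))\ge cM/2$, using the strict concavity and decreasing property of $H$ in $v$ on compact $v$-ranges bounded away from $0$ (boundedness keeps $v$ in a compact subset of $(-\infty,0)$). The $x$-dependence $r(x-z)p$ is controlled by $|x-z|^2/\delta\to0$. Letting $\delta\to0$ and then $\eps\to0$ gives the contradiction.

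The main obstacle is the combination of the $+\infty$ Hamiltonian for $p<0$ and the factor $((1-\g)v)^{\cdot}$. The fix is to show that the test-function gradients are nonnegative at the relevant points, which the sub/supersolution inequalities force, since otherwise $H=+\infty$. I would also need uniform Lipschitz control of $H$ in $v$ on the relevant $p$-range; this is obtained by bounding $p$ above via coercivity.
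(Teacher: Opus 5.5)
Your route is the standard one: index selection through the cooperative coupling, an increasing logarithmic localization $\eps\phi(x)$, and a Soner-type inward shift, so that the supersolution is only tested at $z_\delta\in(\ux,\infty)$. It is presumably the argument behind \cite[Proposition 3.3]{Achdou:2026aa}; the paper itself gives no proof and only cites it. Take $\phi(x)=\log(1+x-\ux)$, so that $\phi$ is defined and nonnegative on $[\ux,\infty)$.

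The argument closes if you keep the scaling $\frac1\delta\bigl|\frac{z-x}{\delta}-\eta\bigr|^2$. This gives $\bigl|\frac{z_\delta-x_\delta}{\delta}-\eta\bigr|=O(\sqrt\delta)$, hence $z_\delta>x_\delta\ge\ux$. Your first version, $\frac{|z-x-\eta\delta|^2}{2\delta}$, only gives $|z_\delta-x_\delta-\eta\delta|=o(\sqrt\delta)$, which does not keep $z_\delta$ off the boundary. Even with the right scaling, three of your justifications are wrong as stated.

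(i) The subsolution inequality does not force $p_x\ge0$. For $p_x<0$ it reads $\frac{\rho}{\theta}\su_j\le+\infty$ and carries no information, so this is exactly where the argument could break. Only the supersolution inequality forces $p_z\ge0$. You get $p_x\ge0$ from the structure of $\Phi$: the penalization depends on $z-x$ only, so $p_x=p_z+\eps\phi'(x_\delta)>p_z\ge0$.

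(ii) The Lipschitz claim is backwards. At local minima of $\sv_j-\varphi$ the supersolution inequality gives $H\le C$, and coercivity then bounds $D\varphi$. So it is $\sv$ that has bounded subdifferentials and is Lipschitz. For $\su$, every negative or large test slope satisfies the subsolution inequality, so no regularity follows. None is needed: $\su$ is continuous by Definition~\ref{def vis sol}. The only boundary regularity used is the right-continuity of $\sv_j$ at $\ux$, which is built into the statement. It enters through $\Phi(x_\delta,z_\delta)\ge\Phi(x_0,x_0+\eta\delta)$, with $x_0$ a maximizer of $\su_j-\sv_j-\eps\phi$.

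(iii) Step 4 needs neither strict concavity nor Lipschitz bounds in $v$. Boundedness does not keep $v$ away from $0$, and for $\theta=1$ the Hamiltonian $H$ does not depend on $v$ at all. Use only
\[
H(x_\delta,y_j,\su_j(x_\delta),p_x)\le H(x_\delta,y_j,\sv_j(z_\delta),p_x);
\]
the strict gap comes from the term $\frac{\rho}{\theta}(\su_j-\sv_j)$. Then use three facts: $p_x\ge p_z\ge0$, the coefficient of $p^{1-\psi}$ in $H$ is negative, and $z_\delta>x_\delta$. They give
\[
H(x_\delta,y_j,\sv_j(z_\delta),p_x)-H(z_\delta,y_j,\sv_j(z_\delta),p_z)\le (rx_\delta+y_j)\,\eps\phi'(x_\delta)+r(x_\delta-z_\delta)p_z\le C\eps .
\]
Note that $p_z=\frac{2}{\delta^2}\bigl(\eta-\frac{z_\delta-x_\delta}{\delta}\bigr)$ is not a priori bounded. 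So the $x$-dependence is not controlled by ``$|x-z|^2/\delta\to0$''. It is controlled by the sign of $r(x_\delta-z_\delta)p_z$ above. Alternatively, use a coercivity bound on $p_z$; it depends on $\eps$ through the localized set, which is harmless since $\delta\to0$ first.
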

\cref{comparison} yields the uniqueness of the viscosity solution when $\theta \geq 1$. Reference \cite{Achdou:2026aa} contains a complete study of continuous-time HA models in the case $\theta\geq 1$. It deals in particular with the existence, uniqueness and regularity of solution to the HJB equation. The following proposition summarizes results from \cite[Proposition 3.7, Proposition 3.9, Proposition 3.10, Proposition 3.15, Proposition 3.16]{Achdou:2026aa} .

\begin{proposition}\label{Prop: v sol}
Assume $\theta \geq 1$. There exists a unique viscosity solution $(v_1,v_2)$ which is $C^1$ and strictly concave. Moreover, $v_j\in W^{2,\infty}_{loc}(\ux,+\infty)$ and $v_2\geq v_1$. 
\end{proposition}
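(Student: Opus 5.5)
The statement collects results already established in \cite{Achdou:2026aa}, so the plan is to assemble them rather than reprove them from scratch. I would proceed in four steps: existence by Perron's method, uniqueness by comparison, regularity, and finally strict concavity together with the ordering $v_2\geq v_1$.

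\emph{Existence and uniqueness.} By \cref{Prop:sub-super}, $\check{\su}$ is a subsolution on $[\ux,+\infty)$ and $\check{\sv}$ is a supersolution on $(\ux,+\infty)$.
\begin{itemize}
\item Since $b<\rho$ and $r(x+y_2/r)=rx+y_2>rx+y_1$, a direct check gives $\check{\su}\le \check{\sv}$, using that $z\mapsto z^{1-\g}/(1-\g)$ is increasing and the constants compare appropriately.
\item Both barriers are bounded on $[\ux,+\infty)$, since they are negative and tend to $0$.
\item Perron's method between these barriers then produces a constrained viscosity solution. It is adapted to state constraints: the supremum of subsolutions is a subsolution up to $\ux$, and the bump construction yields the supersolution property in the interior.
\item The comparison principle of \cref{comparison}, valid for $\theta\ge1$, shows that the upper and lower envelopes coincide. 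This gives continuity and uniqueness at once.
\end{itemize}

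\emph{Regularity.} I would use the structure of $H$ in \cref{def H}.
\begin{itemize}
\item Since $H=+\infty$ for $p<0$ and $H$ is coercive by \cref{H coerc}, the solution is locally Lipschitz and nondecreasing.
\item $C^1$ regularity: because $H$ is strictly convex in $p$ and its minimum \cref{H min} is attained at a unique point, a concave solution cannot have kinks in the supersolution sense. Concave kinks would violate the supersolution inequality, because the max over the superdifferential combined with strict convexity gives a strict inequality.
\item $W^{2,\infty}_{loc}$: the equation can be solved for $Dv_j$ as a locally Lipschitz function of $(x,v_j,v_{\bar\jmath})$ on the branch where $H_p>0$. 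The implicit function theorem then upgrades $Dv_j$ to locally Lipschitz.
\end{itemize}

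\emph{Concavity and ordering.}
\begin{itemize}
\item \emph{Concavity.} Use the joint concavity of $\mathcal F$ in $(c,v)$ for $\theta>1$. For two initial points and their optimal controls, the convex combination of controls is admissible, thanks to the linear dynamics and the convex constraint. The recursive-utility representation then gives concavity of $v$. Strictness follows from strict concavity of $c^{1-\p^{-1}}$.
\item \emph{Ordering $v_2\ge v_1$.} Since $y_2>y_1$, the pair $(\min(v_1,v_2),v_2)$ is a subsolution, or equivalently one compares via \cref{comparison}.
\end{itemize}

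The main obstacle is the comparison step near $\ux$ together with the dependence of $H$ on $v$. I would rely on the monotonicity $H_v<0$ from \cref{Eq:Hv>0}, which gives properness for $\theta\ge1$. This is exactly why the statement is restricted to the late resolution case.
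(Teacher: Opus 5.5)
The paper does not prove this statement. It only collects Propositions 3.7, 3.9, 3.10, 3.15 and 3.16 of \cite{Achdou:2026aa}, so your top-level plan of assembling known results matches what the authors do. The sketches you give for the individual pieces, however, have real holes.

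\textbf{The $W^{2,\infty}_{loc}$ step.} Since $H_p(x,y_j,v_j,Dv_j)=rx+y_j-c_j=s_j$, the implicit function theorem inverts $p\mapsto H(x,y_j,v_j,p)$ only where $s_j\neq 0$. For $v_1$ the relevant branch is $H_p<0$, not $H_p>0$, because $s_1<0$ on $(\ux,+\infty)$; this is easily repaired and then works on compacts of $(\ux,+\infty)$. The real problem is $v_2$. Under \cref{rho-r2}, \cref{Prop: s2 barx} gives an interior point $\widehat x$ with $s_2(\widehat x)=0$, i.e. $Dv_2(\widehat x)=p_{\min}$ and $H_p=0$. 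Set $K=\frac{\rho}{\theta}v_2-\lambda_2(v_1-v_2)$. Near $\widehat x$ you only get $|Dv_2-p_{\min}|\approx\sqrt{2(K-H_{\min})/H_{pp}}$. Knowing only that $K-H_{\min}\ge 0$ is $C^1$ and vanishes at $\widehat x$ gives $|Dv_2-p_{\min}|=o(|x-\widehat x|^{1/2})$, not a Lipschitz bound. Likewise, the differentiated equation $s_2D^2v_2=(\frac{\rho}{\theta}-r+\lambda_2-H_v)Dv_2-\lambda_2Dv_1$ controls only $s_2D^2v_2$, which says nothing at $\widehat x$. A separate argument at the interior zeros of the saving policy is needed, and your proposal has none.

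\textbf{The ordering step is backwards.} Comparing a \emph{sub}solution $(\min(v_1,v_2),v_2)$ with the solution $(v_1,v_2)$ yields only the trivial $\min(v_1,v_2)\le v_1$. What you need is that $(\min(v_1,v_2),v_2)$ is a \emph{super}solution:
\begin{itemize}
\item where $v_2<v_1$, the equation for $v_2$, together with $\lambda_2(v_1-v_2)>0$ and $H(x,y_2,v,p)-H(x,y_1,v,p)=(y_2-y_1)p\ge 0$, gives $\frac{\rho}{\theta}v_2\ge H(x,y_1,v_2,p)$ for $p\in D^-v_2$;
\item at contact points, a test function below the minimum lies below both functions;
\item the second equation survives because $\min(v_1,v_2)\le v_1$.
\end{itemize}
Then \cref{comparison} gives $v_1\le \min(v_1,v_2)$, i.e. $v_1\le v_2$.

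\textbf{Concavity and $C^1$.} Your concavity argument presupposes a stochastic-control representation of $v$ with Epstein--Zin recursive utility. That requires well-posedness of the utility process for this non-Lipschitz aggregator with switching income, a dynamic programming principle, and identification of the value function with the unique constrained viscosity solution. None of this is available in the paper or argued by you. Strictness also needs an argument that the two optimal consumption streams differ. Your $C^1$ step relies on concavity and so inherits this gap.

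The $C^1$ step also names the wrong inequality. At a concave kink $x_0$, $D^-v_j(x_0)=\emptyset$, so the supersolution test is vacuous. What fails is the \emph{sub}solution inequality on $D^+v_j(x_0)=[p_+,p_-]$. Passing to the limit from points of differentiability gives $H(x_0,y_j,v_j(x_0),p_\pm)=\frac{\rho}{\theta}v_j(x_0)-\lambda_j(v_{\bar\jmath}(x_0)-v_j(x_0))$. Strict convexity then makes $H$ strictly smaller in between, which contradicts the subsolution inequality.

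\textbf{Minor.} $\check{\su}\le\check{\sv}$ follows from $b>r$, not from $b<\rho$.
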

Next we give some results concerning the saving policies $s_j$. The following proposition summarizes results from \cite[Proposition 3.17, Corollary 3.18, Proposition 3.21]{Achdou:2026aa}.
\begin{proposition}\label{Prop: saving}
Assume $\theta \geq 1$. The optimal saving policy $s_1$ has the following properties: $s_1(x)< 0$ for all $x> \ux$ and $s_1(\ux)=0$. If furthermore
\beq\label{rho-r2}
\begin{aligned}
\lc \rho-r\rc(r\ux+y_2)^{-1/\psi}+\lambda_2\lc (r\ux+y_2)^{-1/\psi}-(r\ux+y_1)^{-1/\psi}\rc< 0,
\end{aligned}
\eeq
then $s_2(\ux)>0$. Moreover, $Dv_1(\ux)>Dv_2(\ux)$. 
\end{proposition}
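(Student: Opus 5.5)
The statement collects results from \cite{Achdou:2026aa}, so the plan is to reconstruct the argument from the regularity in \cref{Prop: v sol} and the HJB \eqref{HJB}. The saving policy is $s_j(x)=rx+y_j-c_j(x)$. By \eqref{c optimal}, $c_j=\rho^{\p}(Dv_j)^{-\p}((1-\g)v_j)^{\frac{1-\g\p}{1-\g}}$ wherever $Dv_j>0$. Since $v_j$ is $C^1$ and strictly concave (\cref{Prop: v sol}), $s_j$ is continuous. For $x>\ux$, I would differentiate \eqref{HJB}, which is legitimate a.e.\ because $v_j\in W^{2,\infty}_{loc}$. The envelope theorem gives
\[
\frac{\rho}{\theta}Dv_j = rDv_j + s_j D^2v_j + \mathcal F_v(c_j,v_j)Dv_j + \lambda_j(Dv_{\bar\jmath}-Dv_j).
\]
Rearranging, $s_jD^2v_j$ is expressed through $Dv_j$, $Dv_{\bar\jmath}$ and $v_j$.

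\textbf{Step 1: $s_1<0$ on $(\ux,\infty)$.} I would first show $Dv_1> Dv_2$ everywhere. To get this, I would argue by contradiction at a point where $Dv_1-Dv_2$ attains a nonpositive minimum, or use the first-order conditions together with $v_2\ge v_1$. Then, at any $x$ where $s_1(x)\ge0$, I would plug into the differentiated equation for $j=1$. Using $\rho>r$ and $\lambda_1(Dv_2-Dv_1)<0$, the right-hand terms force $s_1D^2v_1$ to have a sign incompatible with strict concavity, giving a contradiction. An alternative, if the derivative identity is delicate, is a comparison argument: if $s_1\ge0$ on some interval, then state $1$ never leaves that region except by switching, and the value is dominated by the constant-wealth strategy, contradicting $\rho>r$.

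\textbf{Step 2: $s_1(\ux)=0$.} The state constraint gives $s_1(\ux)\ge0$ from the subsolution property at $\ux$, since $Dv_1(\ux)$ must keep the drift inward. Continuity of $s_1$ together with $s_1<0$ on $(\ux,\infty)$ gives $s_1(\ux)\le 0$, so $s_1(\ux)=0$.

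\textbf{Step 3: $s_2(\ux)>0$ under \eqref{rho-r2}.} Suppose instead $s_2(\ux)=0$. Then $c_j(\ux)=r\ux+y_j$ for both $j$. The first-order condition then pins down each $Dv_j(\ux)$ in terms of $(r\ux+y_j)^{-1/\psi}$ and $v_j(\ux)$. I would evaluate the differentiated equation for $j=2$ at $\ux$, taking a limit from the right and using $s_2D^2v_2\to 0$. After inserting the closed forms, this produces an inequality reversing \eqref{rho-r2}, which is a contradiction.

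\textbf{Step 4: $Dv_1(\ux)>Dv_2(\ux)$.} Finally, $Dv_1(\ux)>Dv_2(\ux)$ follows by passing to the limit in Step 1. Strictness comes from the $\lambda$ coupling terms, or from comparing the first-order conditions: at $\ux$ we have $c_1=r\ux+y_1<c_2$ and $v_1\le v_2$.

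I expect the main obstacle to be the rigorous use of the differentiated HJB with only $W^{2,\infty}$ regularity. The delicate points are the sign of the $v$-dependent term $\mathcal F_v$ when $\theta\ge1$ and the boundary limits at $\ux$. There I would work with difference quotients or one-sided viscosity arguments rather than pointwise second derivatives.
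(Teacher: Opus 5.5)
The paper gives no proof of this statement; it is quoted from \cite{Achdou:2026aa}. Your plan therefore has to stand on its own, and it does not. It is essentially the CRRA argument, and it breaks where Epstein--Zin differs from CRRA: the marginal utility and the zeroth-order term $\frac{\rho}{\theta}-r-H_v$ in the differentiated equation both depend on $v_j$.

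\textbf{Steps 1 and 4.} Both rest on $Dv_1>Dv_2$ on all of $(\underline{x},\infty)$, which you never prove. The statement itself only asserts this inequality at $\underline{x}$. Neither of your suggested routes goes through as it would for CRRA.
\begin{itemize}
\item At a minimum point of $Dv_1-Dv_2$, the coefficients $\frac{\rho}{\theta}-r-H_v(x,y_j,v_j,Dv_j)$ multiplying $Dv_1$ and $Dv_2$ are different, so they do not combine into a multiple of $Dv_1-Dv_2$.
\item Since $c_j=\rho^{\psi}(Dv_j)^{-\psi}((1-\gamma)v_j)^{\frac{1-\gamma\psi}{1-\gamma}}$ with $\frac{1-\gamma\psi}{1-\gamma}\le 0$, knowing $Dv_1\le Dv_2$ and $v_1\le v_2$ does not fix the sign of $c_1-c_2$, hence of $s_1-s_2$.
\end{itemize}
The same issue breaks the strictness argument in Step 4. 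We have $Dv_j(\underline{x})=\rho\, c_j(\underline{x})^{-1/\psi}((1-\gamma)v_j(\underline{x}))^{1-\theta}$, and $v_1\le v_2$ makes the second factor smaller for $j=1$. So even $c_1(\underline{x})<c_2(\underline{x})$, which you also do not justify, would not give $Dv_1(\underline{x})>Dv_2(\underline{x})$.

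A smaller, repairable point in Step 1: it is not $\rho>r$ that controls the sign, since $\frac{\rho}{\theta}-r$ can be negative. What works is this. Where $s_1\ge 0$, the HJB and $v_2\ge v_1$ give $\mathcal{F}(c_1,v_1)\le\frac{\rho}{\theta}v_1$. Hence $-H_v=(\theta-1)\mathcal{F}(c_1,v_1)/v_1\ge(\theta-1)\frac{\rho}{\theta}$, and so $\frac{\rho}{\theta}-H_v\ge\rho$.

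\textbf{Step 3 fails as written.} The claim $s_2D^2v_2\to 0$ is unjustified; compare $s_1D^2v_1\to\varkappa$ in \cref{Prop: Dv1 ux}. From $s_2(\underline{x})=0$ and concavity you only get a sign condition on $\varkappa_2:=\lim_{x\to\underline{x}^+}s_2D^2v_2$, for instance $\varkappa_2\ge 0$ if $s_2\le 0$ to the right of $\underline{x}$.

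More seriously, inserting the closed forms does not reverse \eqref{rho-r2}. Let $L$ be the left-hand side of \eqref{rho-r2}, $q=v_1(\underline{x})/v_2(\underline{x})\ge 1$, and $R=\big((r\underline{x}+y_2)/(r\underline{x}+y_1)\big)^{1/\psi}>1$. At the optimal control, $H_v=(1-\theta)\mathcal{F}/v$. So the HJB at $\underline{x}$ with $s_2(\underline{x})=0$ gives $\frac{\rho}{\theta}-H_v=\rho-(\theta-1)\lambda_2(q-1)$ for $j=2$. Combined with $Dv_j(\underline{x})=\rho(r\underline{x}+y_j)^{-1/\psi}((1-\gamma)v_j(\underline{x}))^{1-\theta}$, this yields
\[
\varkappa_2=Dv_2(\underline{x})\Big[(r\underline{x}+y_2)^{1/\psi}L+\lambda_2\big(R(1-q^{1-\theta})-(\theta-1)(q-1)\big)\Big].
\]
For $\theta=1$ the extra term vanishes and you recover the CRRA argument. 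For $\theta>1$ and $q$ close to $1$, the extra term behaves like $\lambda_2(\theta-1)(R-1)(q-1)>0$. So $L<0$ is not incompatible with $\varkappa_2\ge 0$, and no contradiction follows. Closing Step 3 requires an additional, specifically Epstein--Zin argument that controls this term, and the proposal does not contain one.
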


\begin{remark}
In the analysis of the numerical scheme for $\theta\geq 1$, we will focus on situations in which $s_2(\underline x)>0$,  because at the mean field  equilibrium described by (1.2),   the state constraint is not binding for agents with high income. Nevertheless, note that the methods described below apply even if \cref{rho-r2} is not satisfied. It has been shown in \cite{Achdou:2026aa} that in this case $s_2(\underline x)=0$ and $s_2<0$ in some interval $(\underline x, \underline x+ \epsilon)$. If furthermore,  $\lc \rho/\theta-r\rc(r\ux+y_2)^{-1/\psi}+\lambda_2\lc (r\ux+y_2)^{-1/\psi}-(r\ux+y_1)^{-1/\psi}\rc\geq 0$, then $s_2(x)<0$ for all $x>\ux$. We refer to \Cref{Sec: Numerical} below for an example.
\end{remark}

The next result deals with the behavior of $s_2$ as $x\to +\infty$. It justifies solving the HJB equation numerically on a bounded domain $[\ux,\bar{x}]$. 
\begin{proposition}[\cite{Achdou:2026aa}, Proposition 3.22]\label{Prop: s2 barx}
Assume $\theta \geq 1$ and $s_2(\ux)>0$. There exists $\widehat{x}>\ux$ such that $s_2(\widehat{x})=0$ and $s_2(x)<0$ for all $x\geq \widehat{x}$.
\end{proposition}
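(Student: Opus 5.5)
We need to prove: assume $\theta\ge1$ and $s_2(\ux)>0$. Then there exists $\widehat x>\ux$ with $s_2(\widehat x)=0$ and $s_2(x)<0$ for all $x\ge\widehat x$.

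The proof rests on an ODE for $Dv_2$ obtained by differentiating the HJB equation along the optimal policy, combined with the sign information already available for $v_1$, $v_2$ and $s_1$.

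\textbf{Step 1: the differentiated equation.} Write $s_j=rx+y_j-c_j$ with $c_j$ given by \cref{c optimal}. By \cref{Prop: v sol}, $v_j\in W^{2,\infty}_{loc}$, so we may differentiate \eqref{HJB} almost everywhere. The envelope theorem, i.e. $H_p=s_j$, gives
\[
\Big(\frac{\rho}{\theta}+\lambda_2-r-H_v(x,y_2,v_2,Dv_2)\Big)Dv_2=s_2\,D^2v_2+\lambda_2 Dv_1 .
\]
At a point where $s_2=0$ the term $s_2D^2v_2$ drops out. This gives an algebraic relation between $Dv_1$ and $Dv_2$ at such a point, and we will read off the sign of $Ds_2$ from it.

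\textbf{Step 2: existence of a zero of $s_2$.} We argue by contradiction. Suppose $s_2>0$ on $[\ux,\infty)$. Then $s_1<0<s_2$, so $c_2<rx+y_2$ everywhere. The strategy is to compare the growth of $Dv_2$ and $v_2$ with the sub- and supersolution bounds of \cref{Prop:sub-super}, which bracket $v_j$ between multiples of $(x+\text{const})^{1-\gamma}$.

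By concavity, $Dv_2$ is controlled by difference quotients of these barriers, so $Dv_2(x)\le C x^{-\gamma}$ for large $x$. Plugging this into \cref{c optimal} and using $(1-\gamma)v_2\sim x^{1-\gamma}$ gives
\[
c_2\ge \rho^\psi C^{-\psi}x^{\gamma\psi}\,x^{1-\gamma\psi}\cdot\text{const}=\kappa x .
\]
The constant $\kappa$ has to be identified with the homogeneous (no income risk) solution. There consumption is $c=bx$-like with $b>r$ as in \cref{tab1}, so $s_2\approx (r-b)x+O(1)<0$ for large $x$, which contradicts $s_2>0$.

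A cleaner way to obtain this is to use that both barriers have asymptotic slope ratio tending to $1$ after rescaling. A blow-up $x\mapsto v_2(tx)t^{\gamma-1}$ converges locally in $C^1$ by concavity to the homogeneous solution $\frac{(bx)^{1-\gamma}}{1-\gamma}$, because the switching term and the incomes become negligible. Hence $c_2(x)/x\to b>r$ and $s_2\to-\infty$. Making this rescaling argument rigorous, and in particular getting $C^1$ convergence of derivatives from concavity plus uniqueness of the limit problem via \cref{comparison}, is the main obstacle.

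\textbf{Step 3: the sign after the first zero.} Let $\widehat x$ be the first zero of $s_2$; it exists by Step 2 and continuity, and $\widehat x>\ux$. We must show $s_2<0$ on $(\widehat x,\infty)$. Suppose instead that $s_2$ returns to $0$ at some $x_1>\widehat x$ from below, so that $Ds_2(x_1)\ge0$.

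Use Step 1 at $\widehat x$ and at $x_1$, together with $Dv_1>Dv_2$, which extends from $\ux$ because $v_2-v_1$ is decreasing where $s_1<0$ by the comparison structure. The zero-drift relation then expresses $Ds_2$ at a zero as
\[
Ds_2=r-Dc_2 ,
\]
where $Dc_2$ is computed from \cref{c optimal}:
\[
\frac{Dc_2}{c_2}=-\psi\frac{D^2v_2}{Dv_2}+\frac{1-\gamma\psi}{1-\gamma}\,\frac{Dv_2}{v_2}.
\]
Concavity gives $-D^2v_2\ge0$, and the second term is positive since $v_2<0$ and $\theta\ge1$. This should show that $Dc_2>r$ at a zero, i.e. $s_2$ crosses zero only downward, which contradicts the existence of $x_1$.

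If the bound $Dc_2>r$ is not available directly, the fallback is a barrier argument. On $[x_1,\infty)$ the state is absorbing for high income, so $v_2$ coincides there with the value of a problem started at $x_1$. Comparing with the constant-consumption strategy $c=rx_1+y_2$ via \cref{comparison} should contradict optimality.
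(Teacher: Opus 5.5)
\textbf{Verdict: genuine gap.} The proposition needs $s_2<0$ on a half-line $[X,+\infty)$. Your Step~2 does not prove this, and your Step~3 aims at something the statement does not require.

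\textbf{Step 3 is unnecessary, and as written it does not prove single crossing.} The statement does not ask for $\widehat x$ to be the \emph{first} zero of $s_2$. Suppose you know $s_2<0$ on some $[X,+\infty)$. Then the set of zeros of $s_2$ in $[\ux,X]$ is closed and nonempty, by continuity of $s_2$ (which follows from $v_2\in C^1$ and $Dv_2>0$) together with $s_2(\ux)>0$. Its maximum $\widehat x>\ux$ does the job. Choosing the first zero instead forces you to prove single crossing, and your argument does not deliver it:
\begin{itemize}
\item Nonnegativity of the two terms in $Dc_2/c_2$ gives only $Dc_2\ge 0$, not $Dc_2>r$.
\item $D^2v_2$ need not exist at a zero of $s_2$, where the differentiated equation of Step~1 degenerates (compare the blow-up of $D^2v_1$ at $\ux$).
\item The relation of Step~1 is never actually used: $Ds_2=r-Dc_2$ is just the definition of $s_2$.
\item The fallback is not an argument.
\end{itemize}

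\textbf{The gap is in Step 2.}
\begin{itemize}
\item The concavity bound gives $c_2\ge\kappa x$, but nothing makes $\kappa$ exceed $r$.
\item The barriers of \cref{Prop:sub-super} rescale to $\frac{(rx)^{1-\g}}{1-\g}$ and $\frac{(bx)^{1-\g}}{1-\g}$. These differ because $r<b$, so they do not pin down the blow-up limit. Your claim that their slope ratio tends to $1$ is false.
\item The switching term $\lambda_2(\tilde v_1-\tilde v_2)$ is invariant under the scaling $t^{\g-1}v_j(t\,\cdot)$. It vanishes in the limit only if $\tilde v_1$ and $\tilde v_2$ have the same limit, which is what you are trying to prove.
\item \cref{comparison} concerns bounded functions on $[\ux,+\infty)$ with $y_j>0$. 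It does not apply to the limiting system with $y=0$ on $(0,+\infty)$.
\item Minor: the limiting consumption rate is $\rho^{\p}b^{1-\p}=r+\p(\rho-r)$, not $b$; $b$ is the certainty-equivalent coefficient. This is harmless, since both exceed $r$.
\end{itemize}

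\textbf{How to close the gap.} What is missing is a lower barrier with the same leading-order behavior as $\check{\sv}$.
\begin{itemize}
\item Take $\xi\ge\max\{\ux,0\}$ and $\mathsf{w}(x)=\frac{(b(x-\xi))^{1-\g}}{1-\g}$ for $x>\xi$. The computation showing that $\check{\sv}$ solves the equation gives $H(x,y_j,\mathsf{w},D\mathsf{w})=\frac{\rho}{\theta}\mathsf{w}+(r\xi+y_j)D\mathsf{w}>\frac{\rho}{\theta}\mathsf{w}$.
\item Set $\mathsf{w}:=-\infty$ on $[\ux,\xi]$ and use the same function in both components. Then $\max\{\check{\su},\mathsf{w}\}$ is a bounded subsolution on $[\ux,+\infty)$. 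It coincides with $\check{\su}$ near $\ux$, and the crossing point is a convex kink, so there is no test function there.
\item By \cref{comparison}, $t^{\g-1}v_2(tx)$ is squeezed between two functions that both converge to $\frac{(bx)^{1-\g}}{1-\g}$ on $(0,+\infty)$.
\item Concavity upgrades this to convergence of derivatives. Hence $c_2(x)/x\to\rho^{\p}b^{1-\p}=r+\p(\rho-r)$ and $s_2(x)/x\to-\p(\rho-r)<0$.
\end{itemize}
This gives $s_2<0$ on a half-line, and taking $\widehat x$ to be the largest zero finishes the proof.
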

\begin{remark}
\cref{Prop: s2 barx} implies that if $\bar{x}>\widehat{x}$, then imposing an artificial state constraint at $\bar{x}$ while solving \cref{HJB} does not change the solution.
\end{remark}
It has been shown in \cite[Corollary 3.19]{Achdou:2026aa} that, under the same assumptions as \cref{Prop: saving},  $\lim_{x\to \ux}D^2v_1(x)=-\infty$. This will require special care in proving the convergence of the finite difference scheme below. The next result, from \cite[Appendix C]{Achdou:2026aa}, is about the asymptotic analysis of $s_1$, $D^2v_1$ and $Dv_1$ near $x=\ux$. 
\begin{proposition}\label{Prop: Dv1 ux}
We make the same assumptions as in \cref{Prop: saving}. Near $x=\ux$, the functions $s_1$, $D^2v_1$ and $Dv_1$ have the following behavior: 
\beq\label{Eq: s1D2v1 ux}
s_1(x)D^2v_1(x)= \varkappa+o(1),
\eeq
\beq\label{Eq: Dv1 ux}
Dv_1(x)\sim \rho\frac{((1-\g)v_1(\ux))^{\frac{\p^{-1}-\g}{1-\g}}}{ (r\ux+y_1)^{\p^{-1}}}-\sqrt{\frac{2\varkappa(x-\ux)}{\p \lc r\ux+y_1\rc^{1+\p^{-1}}} ((1-\g)v_1(\ux))^{\frac{\p^{-1}-\g}{1-\g}}},
\eeq
where $\varkappa$ is given by
\beq\label{Eq: varkappa}
\varkappa:=\lc \frac{\rho}{\theta}-r\rc Dv_1(\ux)+\lambda_1(Dv_1(\ux)-Dv_2(\ux))-H_v(\ux,y_1,v_1(\ux),Dv_1(\ux))Dv_1(\ux).
\eeq
\end{proposition}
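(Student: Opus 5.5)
The plan is a local analysis of the HJB equation for $j=1$ near $x=\ux$. I use the regularity from \cref{Prop: v sol} ($v_1\in C^1\cap W^{2,\infty}_{loc}(\ux,+\infty)$, strictly concave) and the facts from \cref{Prop: saving}: $s_1(\ux)=0$, $s_1<0$ on $(\ux,+\infty)$, and $Dv_1(\ux)>Dv_2(\ux)$.

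\textbf{Step 1: boundary value of $Dv_1$.} On $(\ux,+\infty)$ the equation holds pointwise a.e. with $Dv_1>0$. The optimal saving is
\[
s_1=rx+y_1-c_1, \qquad c_1=\rho^{\p}(Dv_1)^{-\p}((1-\g)v_1)^{\frac{1-\g\p}{1-\g}}
\]
by \cref{c optimal}. Continuity of $Dv_1$ up to $\ux$ and $s_1(\ux)=0$ give $c_1(\ux)=r\ux+y_1$. Solving for $Dv_1(\ux)$ yields
\[
Dv_1(\ux)=\rho\,(r\ux+y_1)^{-\p^{-1}}((1-\g)v_1(\ux))^{\frac{\p^{-1}-\g}{1-\g}}.
\]
This is the leading term in \cref{Eq: Dv1 ux}. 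It also matches the fact that at $\ux$ the Hamiltonian attains its minimum \cref{H min}, since $H_p=s_1=0$ there.

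\textbf{Step 2: differentiate the equation.} Write the equation as $\frac{\rho}{\theta}v_1=H(x,y_1,v_1,Dv_1)+\lambda_1(v_2-v_1)$. Differentiating in $x$ (legitimate a.e. thanks to $W^{2,\infty}_{loc}$; one can also pass through difference quotients) and using $H_x=rp$ and $H_p=s_1$ gives
\[
s_1D^2v_1=\Big(\frac{\rho}{\theta}-r\Big)Dv_1+\lambda_1(Dv_1-Dv_2)-H_v(x,y_1,v_1,Dv_1)Dv_1 .
\]
The right-hand side is continuous up to $\ux$, because $v_1,v_2\in C^1$ and $H_v$ is smooth for $p>0$ and $v<0$. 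Its value at $\ux$ is exactly $\varkappa$ from \cref{Eq: varkappa}, which proves \cref{Eq: s1D2v1 ux}. One should also check that $\varkappa>0$, which is consistent with $D^2v_1\to-\infty$ and $s_1<0$. This should follow from $Dv_1(\ux)>Dv_2(\ux)$ together with the sign information contained in the proof of $s_1<0$. If it is not immediate, I would derive it from \cref{rho-r2} by comparing with $v_2$ at $\ux$.

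\textbf{Step 3: square-root asymptotics.} Expand $s_1$ near $\ux$ as a function of $p=Dv_1$. Since $s_1=H_p(x,y_1,v_1,p)$ and $H_{pp}=\p\rho^{\p}p^{-\p-1}((1-\g)v)^{\frac{1-\g\p}{1-\g}}$, we get
\[
s_1(x)=H_{pp}\,(Dv_1(x)-Dv_1(\ux))+O(x-\ux)+o(Dv_1(x)-Dv_1(\ux)),
\]
where the $O(x-\ux)$ term comes from the explicit $x$- and $v$-dependence. Set $w(x)=Dv_1(\ux)-Dv_1(x)\ge0$. Step 2 then gives the ODE
\[
-\kappa_0\,w\,w'\approx \varkappa, \qquad \kappa_0=H_{pp}\big|_{\ux},
\]
up to lower-order terms. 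Integrating yields $w^2\sim 2\varkappa(x-\ux)/\kappa_0$, with the sign of $\kappa_0$ matching the sign conventions. Substituting $\kappa_0$ evaluated at $p=Dv_1(\ux)$ from Step 1 simplifies to the constant $\p(r\ux+y_1)^{1+\p^{-1}}((1-\g)v_1(\ux))^{-\frac{\p^{-1}-\g}{1-\g}}$ appearing in \cref{Eq: Dv1 ux}.

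\textbf{Main obstacle.} The hard part is justifying that the $O(x-\ux)$ term is negligible compared with $w$. This requires showing a priori that $w\gg x-\ux$, which itself follows from $D^2v_1\to-\infty$, i.e. from Step 2 with $\varkappa>0$. Once that holds, one integrates $\frac{d}{dx}(w^2)=2\varkappa/\kappa_0+o(1)$ rigorously on $(\ux,x)$.
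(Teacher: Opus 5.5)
Your route is the paper's route. The paper imports this result from \cite[Appendix C]{Achdou:2026aa}, and the outline it reproduces in the proof of \cref{Prop: D2v1 ux} is exactly your Steps 2--3: take $q_1\approx Dv_1-Dv_1(\ux)$ and $Q_1=q_1^2$, show $\frac12 H_{pp}\,DQ_1+o(DQ_1)=\varkappa+o(1)$, and integrate. Steps 1--2 are fine.

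\textbf{Gap: positivity of $\varkappa$.} Step 3 rests on $w\gg x-\ux$, which you derive from $\varkappa>0$, but you leave $\varkappa>0$ open. It is not visible from \cref{Eq: varkappa}, since $\frac{\rho}{\theta}-r$ can be negative when $\theta$ is large. It follows from the equation at $\ux$. Since $Dv_1(\ux)=p_{\min}$ and $c_1(\ux)=r\ux+y_1$,
\[
H(\ux,y_1,v_1(\ux),Dv_1(\ux))=\frac{(r\ux+y_1)Dv_1(\ux)}{1-\psi^{-1}},\qquad H_v(\ux,y_1,v_1(\ux),Dv_1(\ux))=\Big(\frac1\theta-1\Big)\frac{(r\ux+y_1)Dv_1(\ux)}{(1-\gamma)v_1(\ux)}.
\]
Multiply the HJB equation at $\ux$ by $1-\gamma<0$, and use $\frac{1-\gamma}{1-\psi^{-1}}=\frac1\theta$ and $v_2\ge v_1$. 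This gives $\rho(1-\gamma)v_1(\ux)\le (r\ux+y_1)Dv_1(\ux)$, hence $-H_v\ge \rho(1-\theta^{-1})$, so $\frac{\rho}{\theta}-r-H_v\ge \rho-r$ and
\[
\varkappa\ \ge\ (\rho-r)Dv_1(\ux)+\lambda_1\big(Dv_1(\ux)-Dv_2(\ux)\big)\ >\ 0 .
\]
With this, $D^2v_1=(\varkappa+o(1))/s_1\to-\infty$ because $s_1\to 0^-$, and your argument for $w\gg x-\ux$ goes through. Alternatively, $D^2v_1\to-\infty$ can be quoted from \cite[Corollary 3.19]{Achdou:2026aa}.

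\textbf{Second issue: signs and constants.} With $s_1=-\kappa_0 w(1+o(1))$ and $D^2v_1=-w'$, the ODE is $\kappa_0\, w w'=\varkappa+o(1)$, not $-\kappa_0\, ww'$. More importantly, by Step 1,
\[
\kappa_0=H_{pp}=\frac{\psi\, c_1(\ux)}{Dv_1(\ux)}=\frac{\psi(r\ux+y_1)}{Dv_1(\ux)}=\rho^{-1}\psi(r\ux+y_1)^{1+\psi^{-1}}\big((1-\gamma)v_1(\ux)\big)^{\frac{\gamma-\psi^{-1}}{1-\gamma}} .
\]
So your argument actually yields
\[
Dv_1(x)=Dv_1(\ux)-\sqrt{\frac{2\rho\varkappa(x-\ux)}{\psi(r\ux+y_1)^{1+\psi^{-1}}}\big((1-\gamma)v_1(\ux)\big)^{\frac{\psi^{-1}-\gamma}{1-\gamma}}}+o\big(\sqrt{x-\ux}\big).
\]
This carries a factor $\rho$ under the root that is absent from \cref{Eq: Dv1 ux}. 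Your assertion that $\kappa_0$ simplifies to the constant in the statement is therefore not correct.

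The displayed formula, and the ODE quoted in the proof of \cref{Prop: D2v1 ux}, appear to have lost this $\rho$. You can check this in the CRRA case $\psi=\frac12$, $\gamma=2$. There $H=(rx+y)p-2\sqrt{\rho p}$ and $H_{pp}(p_{\min})=(rx+y)^3/(2\rho)$, whereas the statement's constant corresponds to $(rx+y)^3/2$. This affects only constants, not the $\sqrt{x-\ux}$ behaviour used in \cref{Lemma: error FD}.

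Finally, since $Dv_1(\ux)>0$, the relation $\sim$ read literally is trivial. What your integration proves, and what is used later, is the expansion with remainder $o(\sqrt{x-\ux})$.
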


\begin{lemma}\label{Prop: D2v1 ux}
We make the same assumptions as in \cref{Prop: saving}. There exists $\eta\in (\ux,\bar{x})$ such that for all $x\in (\ux,\eta)$, 
\beq\label{Eq: -D2v1 bound}
0<-Dv^2_1(x)\leq \sqrt{\frac{\varkappa}{\p \lc r\ux+y_1\rc^{1+\p^{-1}}(x-\ux)}\cdot ((1-\g)v_1(\ux))^{\frac{\p^{-1}-\g}{1-\g}}}.
\eeq
and 
\beq\label{Eq: -s1 bound}
0<-s_1(x)<2\sqrt{\varkappa \p \lc r\ux+y_1\rc^{1+\p^{-1}}((1-\g)v_1(\ux))^{\frac{\p^{-1}-\g}{\g-1}}(x-\ux)}
\eeq

\end{lemma}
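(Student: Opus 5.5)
The plan is to read both bounds off the asymptotics of \cref{Prop: Dv1 ux}, using the fact that the constants in the lemma are exactly twice (in one case) and one half (in the other) of the leading constants there. Write $K:=((1-\g)v_1(\ux))^{\frac{\p^{-1}-\g}{1-\g}}$ and $a:=\p(r\ux+y_1)^{1+\p^{-1}}$. Since $Dv_1(\ux)$ is finite by \cref{Prop: v sol}, \cref{Eq: Dv1 ux} reads
\[
Dv_1(x)=Dv_1(\ux)-\sqrt{2\varkappa K/a}\,\sqrt{x-\ux}\,(1+o(1)).
\]
This requires $\varkappa>0$. I would get it from \cref{Eq: s1D2v1 ux}: $s_1<0$ on $(\ux,\infty)$ by \cref{Prop: saving}, and $D^2v_1<0$ a.e. by strict concavity, so $s_1D^2v_1>0$ and hence $\varkappa\ge 0$. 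The value $\varkappa=0$ is excluded by the nondegenerate expansion \cref{Eq: Dv1 ux}, since otherwise it would contradict $D^2v_1\to-\infty$. If needed I would instead check the sign directly from \cref{Eq: varkappa}, using $Dv_1(\ux)>Dv_2(\ux)$, $H_v<0$ when $\theta>1$, and $\rho/\theta-r$.

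For $s_1$, I would use the $C^1$ expansion of $s_1(x)=rx+y_1-c_1(x)$, where by \cref{c optimal} $c_1=\rho^\p (Dv_1)^{-\p}((1-\g)v_1)^{\frac{1-\g\p}{1-\g}}$ and $s_1(\ux)=0$. The map $(v,p)\mapsto c$ is smooth near $(v_1(\ux),Dv_1(\ux))$, and $v_1(x)-v_1(\ux)=O(x-\ux)$. Hence
\[
s_1(x)= \partial_p c\,\bigl(Dv_1(\ux)-Dv_1(x)\bigr)+O(x-\ux).
\]
Here $\partial_p c=\p c/p$, evaluated at $c=r\ux+y_1$ and $p=Dv_1(\ux)=\rho K (r\ux+y_1)^{-\p^{-1}}$ (this identity is the first term of \cref{Eq: Dv1 ux}). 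The sign is $-\partial_pc$, so $s_1<0$, as it should be. This gives
\[
-s_1(x)\sim \frac{\p (r\ux+y_1)^{1+\p^{-1}}}{\rho K}\sqrt{2\varkappa K/a}\,\sqrt{x-\ux}.
\]
I would then simplify the constants and compare with the right-hand side of \cref{Eq: -s1 bound}. Note that there the power of $(1-\g)v_1(\ux)$ has exponent $\frac{\p^{-1}-\g}{\g-1}$, that is, $K^{-1}$, which is consistent with the $1/K$ produced above. The bound $-s_1<2\sqrt{\varkappa a K^{-1}(x-\ux)}$ requires the leading constant to be strictly below $2\sqrt{\varkappa a/K}$, which I expect it is, with a factor like $\sqrt2$ and possibly a factor $\rho$ absorbed. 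Once the leading constant is strictly smaller, the strict inequality holds on some $(\ux,\eta)$ because the remainder is $o(\sqrt{x-\ux})$.

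For $D^2v_1$, I would combine the two results: $-D^2v_1=\dfrac{\varkappa+o(1)}{-s_1}$. Using the asymptotic for $-s_1$ (not just the upper bound, since a lower bound on $-s_1$ is what is needed here) gives $-D^2v_1\sim \frac{1}{2}\sqrt{2\varkappa K/a}\,(x-\ux)^{-1/2}$. This equals $\sqrt{\varkappa K/(2a)}\,(x-\ux)^{-1/2}$, which is strictly below the claimed $\sqrt{\varkappa K/(a(x-\ux))}$ by the factor $1/\sqrt2$. Shrinking $\eta$ then gives \cref{Eq: -D2v1 bound}; positivity follows from $s_1<0$ together with \cref{Eq: s1D2v1 ux} and $\varkappa>0$. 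Alternatively, one can formally differentiate \cref{Eq: Dv1 ux}, but the quotient route avoids needing the asymptotic to hold at the derivative level.

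The main obstacle is the constant bookkeeping in the $s_1$ step. One has to verify that the sharp leading constant really is strictly less than the stated one; this is where the slack factors $2$ and $1/\sqrt2$ come from. One also has to make sure the $o(1)$ in \cref{Eq: Dv1 ux} is relative to the $\sqrt{x-\ux}$ term and not only to the constant term. If \cref{Eq: Dv1 ux} is only meant in the weaker sense, I would instead integrate the ODE $s_1D^2v_1=\varkappa+o(1)$ directly. Since $ds_1/dx\approx-\partial_pc\,D^2v_1$, the product $s_1\,ds_1/dx$ tends to $-\partial_pc\,\varkappa$. Integrating gives $s_1^2\sim 2\partial_pc\,\varkappa(x-\ux)$, which yields both asymptotics rigorously.
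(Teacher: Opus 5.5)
The step you left open (``which I expect it is'') is where the argument fails. Your expansion $s_1\approx\partial_pc\,\bigl(Dv_1(\ux)-Dv_1\bigr)$ is correct; the sign is $\partial_pc=-\p c/p$, but that slip is harmless. At $\ux$, however, $|\partial_pc|=\p(r\ux+y_1)/Dv_1(\ux)=a/(\rho K)$, which is $H_{pp}$ at $(\ux,y_1,v_1(\ux),Dv_1(\ux))$. Feeding in \cref{Eq: Dv1 ux} therefore gives $-s_1\sim\rho^{-1}\sqrt{2\varkappa a(x-\ux)/K}$. The factor $\rho^{-1}$ is not slack that can be absorbed. With it, \cref{Eq: -s1 bound} fails near $\ux$ as soon as $\sqrt2/\rho>2$, i.e. $\rho<1/\sqrt2$, which includes the paper's $\rho=0.05$. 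In your $D^2v_1$ step you then divide $\varkappa$ by $\sqrt{2\varkappa a(x-\ux)/K}$, silently dropping that factor. With your own asymptotic for $-s_1$, the quotient is $-D^2v_1\sim\rho\sqrt{\varkappa K/(2a(x-\ux))}$. So the two halves of your argument are mutually inconsistent, and neither constant comparison is actually established.

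The paper works in the opposite order and never passes through $c(v,p)$. It imports from the companion paper that $D^2v_1\sim Dq_1$, where $q_1$ is the correction term in \cref{Eq: Dv1 ux} and $Q_1=q_1^2$ satisfies $\frac{a}{2K}DQ_1\to\varkappa$. Then $Dq_1=DQ_1/(2q_1)$ gives $-D^2v_1\sim\sqrt{\varkappa K/(2a(x-\ux))}$. Dividing \cref{Eq: s1D2v1 ux} by this equivalence (not merely by the upper bound \cref{Eq: -D2v1 bound}) gives $-s_1\sim\sqrt{2\varkappa a(x-\ux)/K}$. Each is a factor $1/\sqrt2$ inside the stated bound.

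Your computation shows, however, that this imported relation does not square with \cref{c optimal}. Since $s_1\approx\frac{a}{\rho K}q_1$ and $s_1Dq_1\to\varkappa$, one needs $\frac{a}{2\rho K}DQ_1\to\varkappa$ instead. Your fallback, integrating $\frac{1}{2}D(s_1^2)=s_1Ds_1\to\frac{a}{\rho K}\varkappa$, is the self-consistent computation. It gives $-s_1\sim\sqrt{2\varkappa a(x-\ux)/(\rho K)}$ and $-D^2v_1\sim\sqrt{\rho\varkappa K/(2a(x-\ux))}$. With these, \cref{Eq: -D2v1 bound} holds near $\ux$ when $\rho<2$, but \cref{Eq: -s1 bound} with its stated constant can hold only if $\rho\ge 1/2$. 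So do not chase the stated constants. Use the fallback to prove $-s_1\le C\sqrt{x-\ux}$ and $-D^2v_1\le C(x-\ux)^{-1/2}$ near $\ux$, which is all that \cref{Lemma: error FD} uses.

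Finally, your first argument for $\varkappa>0$ is inconclusive: $\varkappa=0$ contradicts neither \cref{Eq: Dv1 ux} nor $D^2v_1\to-\infty$. The direct route works, though. By the envelope theorem and the HJB at $\ux$, $H_v=(1-\theta)\bigl(\frac{\rho}{\theta}-\lambda_1\frac{v_2(\ux)-v_1(\ux)}{v_1(\ux)}\bigr)$ there. Hence $\frac{\rho}{\theta}-H_v\ge\rho$, and $\varkappa\ge(\rho-r)Dv_1(\ux)+\lambda_1\bigl(Dv_1(\ux)-Dv_2(\ux)\bigr)>0$.
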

\begin{proof}
From \cite[Appendix C]{Achdou:2026aa}, we know $Dv^2_1(x)\sim Dq_1(x)$, where $q_1(x)$ and $Q_1(x)=q_1^2(x)$ satisfy
$$
\begin{aligned}
q_1(x)=-\sqrt{\frac{2\varkappa(x-\ux)}{\p \lc r\ux+y_1\rc^{1+\p^{-1}}}\cdot ((1-\g)v_1(\ux))^{\frac{\p^{-1}-\g}{1-\g}}}+o(\sqrt{x-\ux}),\\
\frac{\p \lc r\ux+y_1\rc^{1+\p^{-1}}((1-\g)v_1(\ux))^{\frac{\g-\p^{-1}}{1-\g}}}{2}DQ_1(x)+o(DQ_1(x))=\varkappa+o(1).
\end{aligned}
$$
\cref{Eq: -D2v1 bound} follows from $Dq_1(x)=\frac{DQ_1(x)}{2q(x)}$. From \cref{Prop: saving} we know $-s_1(x)>0$ for all $x>\ux$, then \cref{Eq: -s1 bound} follows from \cref{Eq: -D2v1 bound} and \cref{Eq: s1D2v1 ux}. 
\end{proof}
\cref{Prop: Dv1 ux}, \cref{Prop: D2v1 ux} and the fact that $v_j\in W^{2,\infty}_{loc}(\ux,\bar{x}]$ will be used in \cref{Subsec: rate} below to obtain an explicit convergence rate.\par
The analysis of \cref{MFG} $(i)$  is therefore rather complete in the case $\theta\geq 1$. On the contrary, many things remain to be done in the case $0<\theta<1$. In the present paper, we restrict ourselves to supplying some results that are useful in the study of the numerical scheme.\par
Let us introduce the following system, for a given pair of functions $\left(\widetilde{v}_1,\widetilde{v}_2\right)$: 
\beq\label{Eq: HJB fixed}
\left\{
      \begin{aligned}
\quad &\frac{\rho}{\theta} v_j(x)=H(x,y_j,\widetilde{v}_j,Dv_j)+\lambda_j(v_{\bar \jmath}(x)-v_j(x)),\\
&\frac{\rho}{\theta} v_j(x)=H(x,y_j,\widetilde{v}_j,Dv_j)+\lambda_j(v_{\bar \jmath}(x)-v_j(x)).
     \end{aligned}
   \right.
\eeq
\begin{proposition}\label{Prop: comparison fixed}
Assume $\widetilde{v}_j$ is locally Lipschitz and $\check{\su}_j\leq \widetilde{v}_j\leq \check{\sv}_j$. Let $\mathsf{u}=(\mathsf{u}_1,\mathsf{u}_2)$ and $\mathsf{v}=(\mathsf{v}_1,\mathsf{v}_2)$ be bounded viscosity sub- and supersolution of system \cref{Eq: HJB fixed}. Then $\mathsf{u}\leq \mathsf{v}$ in $[\ux,+\infty)$.
\end{proposition}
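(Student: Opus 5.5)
The plan is to use the classical doubling-of-variables argument for state-constrained problems, adapted to the two coupling terms and the unbounded domain. The key observation is that in \cref{Eq: HJB fixed} the argument $\widetilde{v}_j$ is a \emph{given} function of $x$, so the zeroth-order dependence on the unknown is only through the linear term $(\frac{\rho}{\theta}+\lambda_j)v_j-\lambda_j v_{\bar\jmath}$. This is strictly monotone, since $\frac{\rho}{\theta}>0$ and the coupling matrix is weakly diagonally dominant. Hence the sign of $\theta-1$, and the lack of monotonicity of $H$ in $v$, play no role here, unlike in \cref{comparison}. The Hamiltonian becomes
\[
\widetilde H_j(x,p)=(rx+y_j)p+\frac{\rho^{\p}}{\p-1}p^{1-\p}a_j(x), \qquad a_j(x)=\big((1-\g)\widetilde v_j(x)\big)^{\frac{1-\g\p}{1-\g}}
\]
for $p\ge 0$, and $+\infty$ for $p<0$. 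Because $\check{\su}_j\le\widetilde v_j\le\check{\sv}_j<0$, the function $a_j$ is locally Lipschitz and locally bounded away from $0$ and $\infty$.

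\textbf{Step 1: regularity of the subsolution.} By coercivity \eqref{H coerc}, which holds locally uniformly in $x$ since $a_j$ is locally bounded, and boundedness of $\mathsf{u}$, the subsolution inequality forces $\widetilde H_j(x,D\varphi)\le C$ at contact points. The standard argument then shows that $\mathsf{u}_j$ is locally Lipschitz on $[\ux,\infty)$, with $D\mathsf{u}_j\ge 0$ in the viscosity sense. This gives a priori bounds on the penalization momenta.

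\textbf{Step 2: doubling with Soner's inward shift and a penalization at infinity.} Suppose $M:=\max_j\sup_{x}(\mathsf{u}_j-\mathsf{v}_j)>0$. For small $\alpha,\varepsilon>0$ and a constant $L>0$ chosen via the Lipschitz bound of Step 1, consider
\[
\Phi_j(x,z)=\mathsf{u}_j(x)-\mathsf{v}_j(z)-\frac{|z-x-L\varepsilon|^2}{\varepsilon^2}-\alpha\big(x^2+z^2\big).
\]
The penalization $\alpha(x^2+z^2)$ together with boundedness of $\mathsf u,\mathsf v$ gives a maximizer $(j_\varepsilon,x_\varepsilon,z_\varepsilon)$ over $j$ and $x,z\ge\ux$. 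The shift $L\varepsilon$ combined with the local Lipschitz bound on $\mathsf{u}$ forces $z_\varepsilon>\ux$, so the supersolution inequality is available at $z_\varepsilon$. The subsolution inequality holds at $x_\varepsilon$, including at $\ux$, by \cref{def vis sol}. Moreover $|z_\varepsilon-x_\varepsilon|=O(\varepsilon)$, and $\alpha x_\varepsilon,\alpha z_\varepsilon\to0$ in the usual way. Choosing $j=j_\varepsilon$ to realize the maximum over $j$ also gives $\mathsf{u}_{\bar\jmath}(x_\varepsilon)-\mathsf{v}_{\bar\jmath}(z_\varepsilon)\le \mathsf{u}_j(x_\varepsilon)-\mathsf{v}_j(z_\varepsilon)+o(1)$. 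Therefore the coupling terms combine to give
\[
\frac{\rho}{\theta}\big(\mathsf{u}_j(x_\varepsilon)-\mathsf{v}_j(z_\varepsilon)\big)\le \widetilde H_j(x_\varepsilon,p_\varepsilon+2\alpha x_\varepsilon)-\widetilde H_j(z_\varepsilon,p_\varepsilon-2\alpha z_\varepsilon)+o(1),
\]
where $p_\varepsilon=2(z_\varepsilon-x_\varepsilon-L\varepsilon)/\varepsilon^2$.

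\textbf{Step 3: estimating the Hamiltonian difference (expected main obstacle).} Three features make this harder than in the textbook case. First, $\widetilde H_j$ is $+\infty$ for $p<0$. The supersolution side imposes nothing there, but finiteness of the subsolution side gives $p_\varepsilon+2\alpha x_\varepsilon\ge0$; the perturbation $-2\alpha z_\varepsilon$ on the supersolution side can then make the momentum negative, where $\widetilde H_j=+\infty$ and the supersolution inequality cannot hold, so this case is excluded. Second, $p\mapsto p^{1-\p}$ is only $(1-\p)$-Hölder near $0$, so I would use its concavity and monotonicity directly rather than a Lipschitz bound. Third, the $x$-dependence through $(rx+y_j)p$ and $a_j(x)$ must be controlled. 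For this I use $|x_\varepsilon-z_\varepsilon|\,p_\varepsilon=O(\varepsilon\cdot\varepsilon^{-1})$ bounded, together with the stronger standard fact $|x_\varepsilon-z_\varepsilon|^2/\varepsilon^2\to L^2$-type limits, so that $r|x_\varepsilon-z_\varepsilon|p_\varepsilon\to0$ after refining the shift argument. I also use the local Lipschitz continuity of $a_j$ on the bounded set where $x_\varepsilon,z_\varepsilon$ stay for fixed $\alpha$, with $p_\varepsilon$ bounded by Step 1. The $\alpha$-terms are $O(\alpha(1+x_\varepsilon+z_\varepsilon))$ times Lipschitz constants and vanish when first $\varepsilon\to0$ and then $\alpha\to0$. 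Letting $\varepsilon\to0$ then $\alpha\to0$ yields $\frac{\rho}{\theta}M\le0$, a contradiction. If the $x$-dependence of $p\,rx$ resists this treatment, I would instead switch to the multiplicative penalization $\alpha\log(1+x^2)$ and exploit that $rx+y_j>0$ grows linearly.
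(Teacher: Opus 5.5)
The paper states this proposition without proof, as the frozen-coefficient analogue of \cref{comparison}, so your argument has to stand on its own. The overall route is the right one: doubling of variables, an inward shift for the supersolution variable, penalization at infinity, coupling handled by maximizing over $j$, and the remark that freezing $\widetilde v_j$ makes the sign of $\theta-1$ irrelevant. But you have swapped the roles of sub- and supersolution in the two places where they matter, so the argument has a genuine gap.

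\textbf{Regularity (Steps 1 and 2).} In the paper's convention the subsolution inequality reads $\frac{\rho}{\theta}\mathsf u_j\le \widetilde H_j(x,D\varphi)+\lambda_j(\mathsf u_{\bar\jmath}-\mathsf u_j)$. This is a \emph{lower} bound on $\widetilde H_j$. It is vacuous when $D\varphi<0$, where the right-hand side is $+\infty$, and it also holds for large $D\varphi$. So Step 1 is false. Indeed, $\min_p\widetilde H_j(x,p)=\mathcal F(rx+y_j,\widetilde v_j(x))$ is bounded below on $[\ux,\infty)$ under the barrier bounds on $\widetilde v_j$. Hence any pair $\mathsf u_1=\mathsf u_2=w$ with $w$ continuous and sufficiently negative is a bounded subsolution, with no regularity or monotonicity at all.

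The estimate is carried by $\mathsf v$ instead. At contact points from below, $\widetilde H_j(z,D\varphi)\le \frac{\rho}{\theta}\mathsf v_j-\lambda_j(\mathsf v_{\bar\jmath}-\mathsf v_j)\le C$. This forces $0\le D\varphi\le K_R$ on $(\ux,R]$, uniformly up to $\ux$, because $rx+y_j\ge r\ux+y_1>0$ and $a_j$ is bounded there. So $\mathsf v$ is locally Lipschitz, nondecreasing, and extends continuously to $\ux$.

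Step 2 needs exactly this bound on $\mathsf v$. The inequality $\Phi(x_\varepsilon,z_\varepsilon)\ge\Phi(x_\varepsilon,x_\varepsilon+L\varepsilon)$ gives $|z_\varepsilon-x_\varepsilon-L\varepsilon|\le C_\alpha\varepsilon^2$, hence $z_\varepsilon>\ux$. A bound on $\mathsf u$ gives nothing here. As a by-product, the common momentum $q_\varepsilon=2(x_\varepsilon+L\varepsilon-z_\varepsilon)/\varepsilon^2$ is bounded for fixed $\alpha$. (With your definition of $p_\varepsilon$ the sign is reversed.) This makes the $x$-dependence in Step 3 harmless, and only continuity of $a_j$ is needed.

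\textbf{Sign of the momenta (Step 3).} This argument is inverted for the same reason. A negative momentum makes the \emph{supersolution} inequality impossible, which gives $Q:=q_\varepsilon-2\alpha z_\varepsilon\ge0$. Nothing forces $P:=q_\varepsilon+2\alpha x_\varepsilon\ge0$. If $P<0$, the subsolution inequality at $x_\varepsilon$ is empty and no contradiction follows.

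With your penalization, $P-Q=2\alpha(x_\varepsilon+z_\varepsilon)$. This is negative near $\ux$ when $\ux<0$, which the paper allows ($\ux=-0.15$ in the tests). The fix is to center the penalization at $\ux$, using $\alpha\big((x-\ux)^2+(z-\ux)^2\big)$. Then $P\ge Q\ge 0$, and
\[
\widetilde H_j(x_\varepsilon,P)-\widetilde H_j(z_\varepsilon,Q)=r(x_\varepsilon-z_\varepsilon)P+(rz_\varepsilon+y_j)(P-Q)-\frac{\rho^{\p}}{1-\p}\big(a_j(x_\varepsilon)-a_j(z_\varepsilon)\big)P^{1-\p}-\frac{\rho^{\p}}{1-\p}\,a_j(z_\varepsilon)\big(P^{1-\p}-Q^{1-\p}\big).
\]
The four terms are handled as follows:
\begin{itemize}
\item The last term is $\le 0$, since $P\ge Q\ge0$ and $a_j>0$.
\item The first and third terms vanish as $\varepsilon\to0$, by boundedness of $P$ and continuity of $a_j$.
\item The second term is $O\big(\alpha(1+|\hat x_\alpha-\ux|)^2\big)$. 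It tends to $0$ as $\alpha\to 0$ by the standard fact $\alpha|\hat x_\alpha-\ux|^2\to0$.
\end{itemize}
With these corrections your outline becomes a complete proof, and the fallback to a logarithmic penalization is unnecessary.
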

\begin{proposition}
Assume $\widetilde{v}_j$ is locally Lipschitz and $\check{\su}_j\leq \widetilde{v}_j\leq \check{\sv}_j$. There exists a unique viscosity solution $(v_1,v_2)$ to \cref{Eq: HJB fixed}. Moreover, the solution $v_j\in C^1[\ux,+\infty)$. 
\end{proposition}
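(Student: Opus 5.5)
Since $\widetilde v_j$ is frozen, \cref{Eq: HJB fixed} is a standard weakly coupled system of state-constrained HJB equations with Hamiltonian $p\mapsto H(x,y_j,\widetilde v_j(x),p)$. This Hamiltonian is convex and coercive in $p$, it equals $+\infty$ for $p<0$, and it does not depend on the unknown $v_j$. The zeroth-order part $\frac{\rho}{\theta}v_j+\lambda_j(v_j-v_{\bar\jmath})$ is therefore strictly monotone, and uniqueness follows from \cref{Prop: comparison fixed}. For existence I would use Perron's method between an explicit subsolution and an explicit supersolution, together with the state-constraint version of Perron's method, following the construction in \cite{Achdou:2026aa} used for \cref{Prop: v sol}.

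\textbf{Barriers.} I would look for barriers of the form $\frac{(a(x+k))^{1-\g}}{1-\g}$, i.e. the functions $\check{\su}_j,\check{\sv}_j$ of \cref{Prop:sub-super}, possibly with modified constants. The difficulty is that \cref{Prop:sub-super} concerns \eqref{HJB}, where $H$ is evaluated at the unknown itself, whereas here $H$ is evaluated at $\widetilde v_j$. In the case $0<\theta<1$, \cref{Eq:Hv>0} says $H$ is increasing in $v$. Hence, for $p\ge0$,
\[
H(x,y,\check{\su}_j,p)\le H(x,y,\widetilde v_j,p)\le H(x,y,\check{\sv}_j,p).
\]
The function $\check{\su}$ satisfies $\frac{\rho}{\theta}\check{\su}_j\le H(\cdot,\check{\su}_j,D\check{\su}_j)+\dots\le H(\cdot,\widetilde v_j,D\check{\su}_j)+\dots$, so it remains a subsolution of the frozen system. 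Symmetrically, $\check{\sv}$ remains a supersolution in $(\ux,\infty)$. The ordering $\check{\su}\le\check{\sv}$ follows from $b(x+y_2/r)\ge rx+y_1$, which uses $b>r$. The subsolution property of $\check{\su}$ up to $\ux$ is inherited from \cref{Prop:sub-super}. In the case $\theta\ge1$ the monotonicity is reversed, and I would instead replace the constants $b$ and $y_1/r$ by suitable ones, using that $\widetilde v_j$ is trapped between explicit power functions.

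\textbf{Perron and continuity.} Let $v_j$ be the supremum of the subsolutions lying below $\check{\sv}$. By the standard Perron argument (Ishii), the upper semicontinuous envelope of $v$ is a subsolution on $[\ux,\infty)$ and the lower semicontinuous envelope is a supersolution on $(\ux,\infty)$. Local Lipschitz continuity of $\widetilde v_j$ makes the Hamiltonian continuous in $x$, which is what the comparison proof in \cref{Prop: comparison fixed} requires. Comparison then gives equality of the two envelopes, so $v$ is the unique continuous constrained viscosity solution. Boundedness near infinity is not a problem, since $\check{\su}$ and $\check{\sv}$ are both bounded (negative and tending to $0$).

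\textbf{$C^1$ regularity (main obstacle).} I would first show local Lipschitz continuity and local semiconcavity. Coercivity \cref{H coerc} gives bounds on gradients in the viscosity sense, so $v$ is Lipschitz. Monotonicity of $v_j$ in $x$ follows because viscosity subgradients must be $\ge0$, as $H=+\infty$ for $p<0$. At a point where $v_j$ has a corner, the superdifferential is empty if the corner is convex. If the corner is concave, the supersolution inequality holds for every $p$ in the interval $[D^+v_j]$. Since $p\mapsto H$ is strictly convex and $H(\cdot,\widetilde v_j,p)$ is fixed, the viscosity inequalities force $D^-v_j$ to be empty at interior kinks, while the subsolution inequality forces the set of reachable gradients to be a singleton. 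This is the standard one-dimensional argument that strict convexity gives $C^1$. At $x=\ux$ I would use the state constraint and \cref{Eq: HJB fixed} to identify the one-sided derivative.

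I expect the hardest step to be making this rigorous near $\ux$, where $D^2v_1$ may blow up, and verifying that $\widetilde v_j$ being merely Lipschitz, rather than $C^1$, does not spoil the semiconcavity estimate. I would try first to bypass semiconcavity entirely by working with Dini derivatives and the strict convexity argument above.
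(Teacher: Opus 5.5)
\textbf{The gap: the $C^1$ step.} Your decisive claim is that the viscosity inequalities force $D^-v_j(x_0)=\emptyset$ at an interior kink. That claim is false, and your description also swaps the roles of sub- and supersolution.

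Put $g=\frac{\rho}{\theta}v_j(x_0)+\lambda_j(v_j(x_0)-v_{\bar\jmath}(x_0))$ and $h(p)=H(x_0,y_j,\widetilde{v}_j(x_0),p)$. Let $p_\pm$ be the one-sided derivatives at the corner. As limits of gradients at nearby points where the equation holds, they satisfy $h(p_-)=h(p_+)=g$.

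\emph{Concave corner} ($p_->p_+$): here $D^+v_j(x_0)=[p_+,p_-]$. It is the subsolution inequality $h\ge g$ on this interval that strict convexity of $h$ contradicts.

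\emph{Convex corner} ($p_-<p_+$): here $D^+v_j(x_0)=\emptyset$ and $D^-v_j(x_0)=[p_-,p_+]$. The supersolution requirement $h\le g$ on $[p_-,p_+]$ is \emph{implied} by the convexity of $h$. Nothing is violated, with or without Dini derivatives.

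So strict convexity in $p$ gives only half of the regularity. In \cite{Achdou:2026aa} the other half comes from the concavity of $v_j$ (\cref{Prop: v sol}), which rests on joint concavity of the reward and linearity of the dynamics.

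In the frozen problem the running reward is $(x,c)\mapsto\mathcal{F}(c,\widetilde{v}_j(x))$, where $\widetilde{v}_j$ is an arbitrary locally Lipschitz function between $\check{\su}_j$ and $\check{\sv}_j$. This reward has no concavity in $x$; for $0<\theta<1$, $\mathcal{F}$ is even strictly convex in $v$. So neither concavity nor the semiconcavity you announce is available. The singularities a maximization problem generates are exactly convex corners.

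Such a corner is a Skiba-type point: $p_-<p_{\min}<p_+$, so optimal savings are negative to the left of $x_0$ and positive to the right. Neither your argument nor the hypotheses exclude this when $\widetilde{v}_j$ oscillates between the two barriers.

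To get $v_j\in C^1[\ux,+\infty)$ you need an extra structural ingredient. Examples would be an assumption on $\widetilde{v}_j$ that forces (semi)concavity of $v_j$, or a separate argument that savings cannot jump from negative to positive. The paper states this proposition without proof, so it does not fill this hole either.

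\textbf{What is sound.} For $0<\theta<1$, \cref{Eq:Hv>0} keeps $\check{\su}$ and $\check{\sv}$ as sub- and supersolution of \cref{Eq: HJB fixed}. Perron's method with \cref{Prop: comparison fixed} then gives existence and uniqueness. You need that comparison in a version for semicontinuous functions, with the extension at $\ux$ used in \cref{comparison}.

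For $\theta\ge1$ you still have to write the barriers down. The functions $\frac{(a(x+y_2/r))^{1-\g}}{1-\g}$ work, with $a$ large for the supersolution and $a$ small for the subsolution, because $(1-\g)\widetilde{v}_j$ is comparable to $(x+y_2/r)^{1-\g}$.

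Be precise about where the gradient bound comes from: it is the \emph{supersolution} inequality. For $p\in D^-v_j(x)$ it gives $H(x,y_j,\widetilde{v}_j(x),p)\le\frac{\rho}{\theta}v_j(x)+\lambda_j(v_j(x)-v_{\bar\jmath}(x))$. By \cref{H coerc} and $H=+\infty$ for $p<0$, this yields $0\le p\le C$ locally, hence local Lipschitz continuity and monotonicity. The subsolution inequality gives no bound on $D^+v_j$.

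Local Lipschitz continuity is all that is needed to iterate $\Gamma$ within the hypotheses of \cref{Prop: comparison fixed}. That part of your argument therefore delivers what the paper actually uses.
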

We define the map $\Gamma$: $(v_1,v_2)=\Gamma(\widetilde{v}_1,\widetilde{v}_2)$ if and only if $(v_1,v_2)$ is the unique viscosity solution of \cref{Eq: HJB fixed}.
A solution to \cref{HJB} can then be defined as a fixed point of $\Gamma$.\par
The proposition that follows states that the map $\Gamma$ is monotone when $0<\theta<1$:
\begin{proposition}\label{Prop: ordering}
Assume $0<\theta<1$. Let $(v_1,v_2)=\Gamma(\widetilde{v}_1,\widetilde{v}_2)$ and $(u_1,u_2)=\Gamma(\widetilde{u}_1,\widetilde{u}_2)$. If $\widetilde{u}_j\leq \widetilde{v}_j$, then $u_j\leq v_j$. 
\end{proposition}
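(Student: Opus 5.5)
The plan is to reduce the statement to the comparison principle for the frozen system, \cref{Prop: comparison fixed}. I will show that $v=\Gamma(\widetilde v)$ is a viscosity supersolution of the system \cref{Eq: HJB fixed} in which $\widetilde v$ is replaced by $\widetilde u$. Since $u=\Gamma(\widetilde u)$ is the solution, hence a subsolution, of that same system, comparison will then give $u\le v$.

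\textbf{Step 1: monotonicity of $H$ in its third argument.} For $0<\theta<1$, \cref{Eq:Hv>0} gives $H_v(x,y,v,p)>0$ for all $p>0$. So $H(x,y_j,\cdot,p)$ is nondecreasing in $v$ for every $p\ge0$. For $p<0$ we have $H\equiv+\infty$, which does not depend on $v$. Together with $\widetilde u_j\le\widetilde v_j$ pointwise, this yields
\[
H(x,y_j,\widetilde u_j(x),p)\le H(x,y_j,\widetilde v_j(x),p)\qquad\text{for all } x,\,p .
\]

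\textbf{Step 2: transfer of the supersolution property.} Fix $x>\ux$ and a smooth test function $\varphi$ such that $v_j-\varphi$ has a local minimum at $x$. Because $v$ is a viscosity supersolution of the $\widetilde v$-system,
\[
\frac{\rho}{\theta}v_j(x)\ge H(x,y_j,\widetilde v_j(x),D\varphi(x))+\lambda_j\bigl(v_{\bar\jmath}(x)-v_j(x)\bigr).
\]
By Step 1, the right-hand side is at least the same expression with $\widetilde v_j$ replaced by $\widetilde u_j$. Hence $v$ is a viscosity supersolution in $(\ux,+\infty)$ of the system frozen at $\widetilde u$. In the same way, $u$ is a subsolution in $[\ux,+\infty)$ of that system, since it is its constrained solution.

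\textbf{Step 3: hypotheses of \cref{Prop: comparison fixed}.} The frozen function $\widetilde u$ is locally Lipschitz and satisfies $\check{\su}_j\le\widetilde u_j\le\check{\sv}_j$, by the standing assumptions under which $\Gamma$ is defined. It remains to check that $u$ and $v$ are bounded. I expect this to be the main point of care, because the functions $\check{\su}_j,\check{\sv}_j$ are bounded on $[\ux,\infty)$ (they are negative and tend to $0$) but are not constants. I would handle it as follows:
\begin{itemize}
\item Take $\check{\su}$ and $\check{\sv}$ as a sub- and supersolution of the frozen system. This is the same computation as in \cref{Prop:sub-super}, and I expect it to use the ordering $\check{\su}_j\le\widetilde v_j\le\check{\sv}_j$ together with monotonicity in $v$.
\item Conclude from \cref{Prop: comparison fixed} that $\check{\su}\le\Gamma(\widetilde v)\le\check{\sv}$, so every image under $\Gamma$ is bounded.
\end{itemize}
If this check turns out to be delicate, I would simply note that boundedness is implicit in how $\Gamma$ is defined, namely through the uniqueness class of \cref{Prop: comparison fixed}.

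\textbf{Step 4: the boundary point.} Extend $v_j$ at $\ux$ by continuity, as in \cref{comparison}; this is harmless since $v_j\in C^1[\ux,+\infty)$. Then \cref{Prop: comparison fixed} applied to $u$ and $v$ gives $u_j\le v_j$ on $[\ux,+\infty)$ for $j=1,2$.
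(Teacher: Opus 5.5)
Your argument is correct and is essentially the paper's proof in mirror image. The paper uses $H_v>0$ to show that $u=\Gamma(\widetilde u)$ is a subsolution of the system frozen at $\widetilde v$ and then invokes \cref{Prop: comparison fixed}; you instead show that $v$ is a supersolution of the system frozen at $\widetilde u$, which works equally well.

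One small caution concerns Step 3. Your first bullet is circular as a boundedness argument: applying \cref{Prop: comparison fixed} to $\Gamma(\widetilde v)$ and $\check{\sv}$ already requires $\Gamma(\widetilde v)$ to be bounded. What actually carries that step is your fallback, namely that boundedness is part of the uniqueness class defining $\Gamma$, which the paper also takes for granted.
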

\begin{proof}
Since $\widetilde{u}_j\leq \widetilde{v}_j$, we deduce  
$
H(x,y_j,\widetilde{u}_j,Du_j)\leq H(x,y_j,\widetilde{v}_j,Du_j)
$
from \cref{Eq:Hv>0}. This implies $(u_1,u_2)$ is a subsolution of the system of HJB equations \cref{Eq: HJB fixed} satisfied by $(v_1,v_2)$. The result then follows from \cref{Prop: comparison fixed}.
\end{proof}
\cref{Prop: ordering} yields the following invariance principle. 
\begin{proposition}\label{Prop: invariance}
We use the same notation as in \cref{Prop: ordering}. If $0<\theta<1$ and $\check{\su}_j\leq \widetilde{v}_j\leq \check{\sv}_j$, then $\check{\su}_j\leq v_j\leq \check{\sv}_j$. 
\end{proposition}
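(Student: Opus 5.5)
The plan is to combine the monotonicity of \cref{Prop: ordering} with the fact that the barriers $\check{\su}$ and $\check{\sv}$ are themselves mapped into the right order by $\Gamma$. Since $\check{\su}_j\leq \widetilde{v}_j\leq \check{\sv}_j$, \cref{Prop: ordering} applied twice gives
\[
\Gamma(\check{\su}_1,\check{\su}_2)\leq \Gamma(\widetilde{v}_1,\widetilde{v}_2)=(v_1,v_2)\leq \Gamma(\check{\sv}_1,\check{\sv}_2).
\]
Here $\check{\su}_j,\check{\sv}_j$ are smooth on $[\ux,+\infty)$ and trivially satisfy the admissibility bounds $\check{\su}_j\leq\cdot\leq\check{\sv}_j$, provided $\check{\su}_j\leq\check{\sv}_j$. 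That inequality is checked directly: $1-\g<0$ and $rx+y_1< b(x+y_2/r)$, because $b>r$ and $y_2>y_1$. The conclusion then reduces to two inequalities: $\check{\su}\leq \Gamma(\check{\su})$ and $\Gamma(\check{\sv})\leq \check{\sv}$.

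For the first inequality, observe that when the frozen function in \cref{Eq: HJB fixed} is $\widetilde{v}=\check{\su}$, the equation evaluated at $v=\check{\su}$ coincides with \eqref{HJB} evaluated at $\check{\su}$. More precisely, $H(x,y_j,\widetilde v_j,D\check{\su}_j)=H(x,y_j,\check{\su}_j,D\check{\su}_j)$ pointwise. By \cref{Prop:sub-super} (valid for $0<\theta<1$, as noted before that proposition), $\check{\su}$ is a (classical, hence viscosity) subsolution of \eqref{HJB} on $[\ux,+\infty)$. It is therefore a subsolution of \cref{Eq: HJB fixed} with $\widetilde v=\check{\su}$. The comparison principle \cref{Prop: comparison fixed}, applied with subsolution $\check{\su}$ and supersolution $\Gamma(\check{\su})$ (the latter being a viscosity solution), gives $\check{\su}\leq\Gamma(\check{\su})$. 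The same reasoning with $\widetilde v=\check{\sv}$ shows that $\check{\sv}$ is a supersolution on $(\ux,+\infty)$ of \cref{Eq: HJB fixed}, and comparison gives $\Gamma(\check{\sv})\leq \check{\sv}$. Chaining these with the display above yields $\check{\su}_j\leq v_j\leq\check{\sv}_j$.

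The main obstacle is checking that the hypotheses of \cref{Prop: comparison fixed} hold. That proposition asks for bounded sub- and supersolutions, and the supersolution must be extended at $\ux$ by its limit, as in \cref{comparison}. Both $\check{\su}_j$ and $\check{\sv}_j$ are negative, continuous up to $\ux$ and tend to $0$ at infinity. They are therefore bounded on $[\ux,+\infty)$, since $rx+y_1\ge r\ux+y_1>0$ and $b(x+y_2/r)>0$ there. The solutions $\Gamma(\cdot)$ are $C^1[\ux,+\infty)$, and I would show boundedness by noting they are sandwiched between barriers that are already known. If that argument turned out to be circular, I would fall back on whatever bounds the existence proof for $\Gamma$ supplies.

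A second point needs care: the supersolution property of $\check{\sv}$ is only required in the open interval $(\ux,+\infty)$, which is exactly what the constrained comparison needs. The subsolution property of $\check{\su}$ is required up to $\ux$, and \cref{Prop:sub-super} provides it on $[\ux,+\infty)$. So the state-constraint structure matches the hypotheses of \cref{Prop: comparison fixed}.
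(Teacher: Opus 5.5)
Your proof is correct and follows the same route as the paper. The paper says \cref{Prop: ordering} yields the result and writes out the identical steps for the discrete analogue in \cref{Lemma: early barrier} and \cref{Prop: invariance eps}: monotonicity gives $\Gamma(\check{\su})\le v\le\Gamma(\check{\sv})$, and comparison against the barriers, which are sub- and supersolutions of the frozen equations, gives $\check{\su}\le\Gamma(\check{\su})$ and $\Gamma(\check{\sv})\le\check{\sv}$. On boundedness, you are right that the sandwich argument would be circular, but it is not needed: $\Gamma(\widetilde v)$ is understood as the bounded solution whose uniqueness comes from \cref{Prop: comparison fixed}, so boundedness is part of the well-posedness of $\Gamma$ rather than something to prove here.
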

From \cref{Prop: ordering} and \cref{Prop: invariance}, with $0<\theta<1$ Tarski's fixed point theorem implies that there exists a fixed point of $\Gamma$. Since \cref{comparison} may not hold with $0<\theta<1$, the uniqueness of a constrained viscosity solution to \cref{HJB} remains an open question. 
\begin{proposition}\label{Prop: vis sol bound}
For $0<\theta<1$, there exists a viscosity solution $(v_1,v_2)$ of \cref{HJB} that satisfies
$
\check{\su}_j\leq v_j\leq \check{\sv}_j.
$
\end{proposition}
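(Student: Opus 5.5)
We will obtain a fixed point of $\Gamma$ in the order interval
\[
\mathcal{X}:=\{(w_1,w_2):\ w_j \text{ locally Lipschitz},\ \check{\su}_j\leq w_j\leq \check{\sv}_j\},
\]
using the Kantorovich iteration, i.e.\ the constructive form of Tarski's theorem. The abstract Tarski theorem cannot be applied directly, because $\mathcal{X}$ with the pointwise order is not a complete lattice: suprema of families of Lipschitz functions need not be Lipschitz. We therefore work with monotone sequences and a stability argument.

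\textbf{Step 1 (the monotone sequence).} Set $w^0:=\check{\su}$, which is smooth and hence belongs to $\mathcal{X}$, and define $w^{n+1}:=\Gamma(w^n)$. By \cref{Prop: invariance}, each $w^{n+1}$ lies between $\check{\su}$ and $\check{\sv}$. Each $w^{n+1}$ is also $C^1$ by the existence result for \cref{Eq: HJB fixed}, so it is locally Lipschitz, and the iteration remains in $\mathcal{X}$.

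To start the monotonicity, we need $w^1\geq w^0$. Since $\check{\su}$ is a subsolution of \eqref{HJB} by \cref{Prop:sub-super}, it is a subsolution of \cref{Eq: HJB fixed} with $\widetilde v=\check{\su}$. \cref{Prop: comparison fixed} then gives $\check{\su}\leq \Gamma(\check{\su})=w^1$. Applying \cref{Prop: ordering} inductively yields $w^n\leq w^{n+1}$ for all $n$. The sequence is therefore nondecreasing and bounded above by $\check{\sv}$, so it converges pointwise to some $v$ with $\check{\su}_j\leq v_j\leq\check{\sv}_j$.

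\textbf{Step 2 (local uniform bounds).} To pass to the limit, we need compactness, namely local Lipschitz bounds on $w^n$ that are uniform in $n$. On any compact set $[\ux,M]$, the functions $\widetilde v=w^{n}$ range between the fixed barriers $\check{\su}$ and $\check{\sv}$. On such sets, $(1-\g)\widetilde v$ is bounded above and below by positive constants.

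For $0<\theta<1$, the exponent $\frac{1-\g\p}{1-\g}$ is positive. Hence the nonlinear term in $H$ is bounded from below by $c_M p^{1-\p}$ with $c_M$ uniform in $n$. Combined with the coercivity \cref{H coerc} and the uniform bounds on $w^{n+1}$ themselves, the viscosity subsolution inequality gives a bound on $Dw^{n+1}_j$ in $[\ux,M]$ that is uniform in $n$. This bound follows from the standard argument showing that a bounded subsolution of a coercive equation is Lipschitz. Together with monotone convergence and Dini's theorem (or Arzel\`a--Ascoli), this gives locally uniform convergence $w^n\to v$, with $v$ locally Lipschitz and in particular continuous.

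\textbf{Step 3 (stability).} The function $w^{n+1}$ is a constrained viscosity solution of \cref{Eq: HJB fixed} with $\widetilde v=w^n$. Both $w^n$ and $w^{n+1}$ converge locally uniformly to $v$, and $H$ is continuous in $(x,v,p)$ for $p>0$, with value $+\infty$ for $p<0$. The standard stability of viscosity sub- and supersolutions under locally uniform convergence then shows that $v$ is a supersolution in $(\ux,\infty)$ and a subsolution in $[\ux,\infty)$ of \eqref{HJB}, hence a constrained viscosity solution. Equivalently, $v=\Gamma(v)$ by uniqueness in \cref{Eq: HJB fixed}.

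\textbf{Main obstacle.} The delicate step is Step 2, the uniform local Lipschitz estimate up to the boundary $\ux$. The state constraint must be handled in the subsolution argument there. The Hamiltonian is also infinite for $p<0$, so the stability argument must be checked for test functions with negative slope. For such test functions, the subsolution inequality is trivial and the supersolution inequality cannot occur, which would be verified.
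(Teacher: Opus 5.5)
Your route differs from the paper's. The paper proves the proposition in one line: \cref{Prop: ordering} and \cref{Prop: invariance} make $\Gamma$ a monotone self-map of $[\check{\su},\check{\sv}]$, and it then invokes Tarski's theorem, with no limit passage. You instead run the Kantorovich iteration $w^{n+1}=\Gamma(w^n)$ from $\check{\su}$, which is the continuous counterpart of \cref{HTK algo}, and pass to the limit by compactness and stability. Your objection that the domain of $\Gamma$ is not a complete lattice is fair. However, your Step 2 estimate is exactly what makes the paper's argument rigorous. The pairs $w$ with $\check{\su}\le w\le\check{\sv}$ and $w_j$ nondecreasing and $L$-Lipschitz form a complete lattice under the pointwise order. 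Once $L$ dominates the a priori gradient bound, $\Gamma$ maps this lattice into itself, and Tarski applies with no stability argument. Your route pays for the stability step, but it is constructive. It also identifies the limit as the least fixed point of $\Gamma$ in $[\check{\su},\check{\sv}]$: if $\bar w=\Gamma(\bar w)$ lies there, induction gives $w^n\le\bar w$.

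Two steps need correcting.

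\textbf{Step 2.} The gradient bound cannot come from the subsolution inequality. With the convention of \cref{def vis sol}, the subsolution inequality only yields $H(x,y_j,w^n_j,D\varphi)\ge -C$. Since $H\to+\infty$ as $p\to+\infty$, this is compatible with arbitrarily large slopes. The lemma ``a bounded subsolution of a coercive equation is Lipschitz'' concerns $H(Du)\le C$ at maximum points, which is the opposite convention. Use the supersolution inequality instead. It gives $H(x,y_j,w^n_j(x),D\varphi(x))\le C$ at interior minimum points. Now $H=+\infty$ for $p<0$, and for $p\ge 0$ we have $H(x,y,\widetilde v,p)\ge (r\ux+y_1)p-C'p^{1-\p}$. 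Here $C'$ is independent of $n$ because the exponent $\frac{1-\g\p}{1-\g}$ is positive and $\widetilde v\ge\check{\su}$. Hence $0\le D\varphi\le L$. For the $C^1$ functions $w^{n+1}$ this gives $0\le Dw^{n+1}_j\le L$ on $(\ux,\infty)$, and on $[\ux,\infty)$ by continuity. So the boundary causes no difficulty.

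\textbf{Step 3.} $H$ is not upper semicontinuous at $p=0$. Take a subsolution test at $x_0$ with $D\varphi(x_0)=0$. The approximating slopes $D\varphi(x_n)$ could be negative, and there the inequality for $w^{n+1}$ is vacuous. Your remark on negative slopes only covers $D\varphi(x_0)<0$. The fix is that each $w^{n+1}_j$ is nondecreasing. Hence any local maximum of $w^{n+1}_j-\varphi$ relative to $[\ux,\infty)$ has $D\varphi(x_n)\ge 0$, and $H$ is continuous on $\{p\ge 0\}$.

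With these two corrections your proof goes through.
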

Oberve that \cref{comparison} may not hold if $0<\theta<1$, while the invariance principle in \cref{Prop: invariance} may not hold if $\theta \geq 1$. This is a reason why different algorithms will be needed in the two intertemporal preference cases. 
\section{The finite difference method}
\subsection{The numerical schemes}
Given $\ux\leq 0<\bar{x}$, $I\in \mathbb{N}$ and a step size $\D x=(\bar{x}-\ux)/I$, let us define the grid $\mathcal{G}^{\D x}=\{x_i:\,i\in \mathbb{N},\,i=0,\cdots,I,\,x_i=\ux+i\D x\}$. For two grid functions $U_j$ and $V_j$ defined on $\mathcal{G}^{\D x}$, we use $U_j\preceq V_j$ (resp. $U_j \succeq V_j$) to denote the ordering $U_{i,j}\leq V_{i,j}$ (resp. $U_{i,j}\geq V_{i,j}$) for all $i$. If $U=(U_1,U_2)$ and $V=(V_1,V_2)$, then $U\preceq V$ (resp. $U\succeq V$) means $U_{i,j}\leq V_{i,j}$ (resp. $U_{i,j}\geq V_{i,j}$) for all $i,j$. \par
For a Hamiltonian strictly convex w.r.t. the $p-$variable, we set
\beq\label{Eq: Hmin}
\begin{aligned}
{H}_{\min}\lc x_i,y_j,V_{i,j}\rc=\min_p H\lc x_i,y_j,V_{i,j},p\rc,\\
 {\text{and}}\quad {p}_{\min}\lc x_i,y_j,V_{i,j}\rc=\mathop{\arg\min}_{p}H\lc x_i,y_j,V_{i,j},p\rc.
\end{aligned}
\eeq
The discrete Hamiltonian is defined by
\beq\label{Eq: H num}
\begin{aligned}
&\boldsymbol{H}\lc x_i,y_j,V_{i,j},\boldsymbol{p}^{F},\boldsymbol{p}^{B}\rc\\
={}&\left\{
      \begin{aligned}
\,\, &{H}^{\uparrow}\lc x_i,y_j,V_{i,j},\boldsymbol{p}^{F}\rc\,  && \text{if}\,\,x_i=\ux,\\
&{H}^{\uparrow}\lc x_i,y_j,V_{i,j},\boldsymbol{p}^{F}\rc+{H}^{\downarrow} \lc x_i,y_j,V_{i,j},\boldsymbol{p}^{B}\rc-{H}_{\min}\lc x_i,y_j,V_{i,j}\rc\,  && \text{if}\,\,\ux<x_i<\bar{x},\\
&{H}^{\downarrow}\lc x_i,y_j,V_{i,j},\boldsymbol{p}^{B}\rc\,  && \text{if}\, \,x_i=\bar{x}.
    \end{aligned}
   \right.
       \end{aligned}
\eeq
where
\begin{equation}\label{Eq: H up}
  {H}^{\uparrow}\lc x_i,y_j,V_{i,j},\boldsymbol{p}^{F}\rc=\left\{
      \begin{aligned}
     \quad &H\lc x_i,y_j,V_{i,j},\boldsymbol{p}^{F}\rc  \quad && \text{if }\quad \boldsymbol{p}^{F}\geq {p}_{\min}\lc x_i,y_j,V_{i,j}\rc,\\
      &{H}_{\min}\lc x_i,y_j,V_{i,j}\rc\quad  && \text{if }\quad \boldsymbol{p}^{F}<{p}_{\min}\lc x_i,y_j,V_{i,j}\rc,
     \end{aligned}
   \right.
\end{equation}
and 
\begin{equation}\label{Eq: H down}
  {H}^{\downarrow} \lc x_i,y_j,V_{i,j},\boldsymbol{p}^{B}\rc=\left\{
      \begin{aligned}
      \quad &H\lc x_i,y_j,V_{i,j},\boldsymbol{p}^{B}\rc  \quad && \text{if }\quad 0\leq \boldsymbol{p}^{B}\leq {p}_{\min}\lc x_i,y_j,V_{i,j}\rc,\\
      &{H}_{\min}\lc x_i,y_j,V_{i,j}\rc \quad  && \text{if }\quad \boldsymbol{p}^{B}> {p}_{\min}\lc x_i,y_j,V_{i,j}\rc,\,\,\\
      &+\infty \quad  && \text{if }\quad \boldsymbol{p}^{B}<0.
     \end{aligned}
   \right.
\end{equation}
\begin{lemma}\label{Lemma: H monotone}
The numerical Hamiltonian $\boldsymbol{H}\lc x_i,y_j,V_{i,j},\boldsymbol{p}^{F},\boldsymbol{p}^{B}\rc$ is increasing in $\boldsymbol{p}^{F}$ and decreasing in $\boldsymbol{p}^{B}$. Moreover,
\beq\label{Eq: Hdown leq 0}
{H}^{\downarrow} \lc x_i,y_j,V_{i,j},\boldsymbol{p}^{B}\rc\leq 0\quad \text{if }\quad\boldsymbol{p}^{B}\geq 0.
\eeq
\end{lemma}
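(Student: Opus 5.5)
The plan is to reduce both monotonicity claims to the shape of $p\mapsto H(x_i,y_j,V_{i,j},p)$ for fixed $(x_i,y_j,V_{i,j})$, and then read them off the definitions \cref{Eq: H up}, \cref{Eq: H down} and \cref{Eq: H num}. By \cref{def H}, $H=+\infty$ for $p<0$, and $H$ is finite and strictly convex on $[0,+\infty)$. Since $1-\p\in(0,1)$ and $\p-1<0$, we have $\frac{\rho^{\p}}{\p-1}p^{1-\p}\big((1-\g)V_{i,j}\big)^{\frac{1-\g\p}{1-\g}}\to 0$ as $p\to0^+$. Its derivative tends to $-\infty$, so $p_{\min}>0$ exists and is the unique minimizer; this is consistent with \cref{H min} and \cref{H coerc}. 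Hence $H$ is strictly decreasing on $[0,p_{\min}]$ and strictly increasing on $[p_{\min},+\infty)$.

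\emph{Monotonicity in $\boldsymbol{p}^{F}$.}
From \cref{Eq: H up}, $H^{\uparrow}(\cdot,\boldsymbol{p}^F)=H\big(x_i,y_j,V_{i,j},\max(\boldsymbol{p}^F,p_{\min})\big)$. This is constant for $\boldsymbol{p}^F<p_{\min}$ and increasing for $\boldsymbol{p}^F\ge p_{\min}$. It is continuous at $p_{\min}$, where it equals $H_{\min}$, so it is nondecreasing on $\mathbb{R}$.

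\emph{Monotonicity in $\boldsymbol{p}^{B}$.}
For $\boldsymbol{p}^B\ge0$, $H^{\downarrow}(\cdot,\boldsymbol{p}^B)=H\big(x_i,y_j,V_{i,j},\min(\boldsymbol{p}^B,p_{\min})\big)$. This is decreasing on $[0,p_{\min}]$ and constant equal to $H_{\min}$ afterwards. For $\boldsymbol{p}^B<0$ its value is $+\infty$, which dominates every finite value. So $H^{\downarrow}$ is nonincreasing on $\mathbb{R}$.

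\emph{The combined Hamiltonian.}
In \cref{Eq: H num}, the term $H_{\min}(x_i,y_j,V_{i,j})$ does not depend on $\boldsymbol{p}^F$ or $\boldsymbol{p}^B$. Hence in each of the three cases ($x_i=\ux$, interior, $x_i=\bar x$), $\boldsymbol{H}$ is a sum of a nondecreasing function of $\boldsymbol{p}^F$, a nonincreasing function of $\boldsymbol{p}^B$, and a constant. This gives the claimed monotonicity.

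\emph{Proof of \cref{Eq: Hdown leq 0}.}
For $\boldsymbol{p}^B\ge0$, the monotonicity just shown gives $H^{\downarrow}(\cdot,\boldsymbol{p}^B)\le H^{\downarrow}(\cdot,0)=H(x_i,y_j,V_{i,j},0)$. By \cref{def H}, this equals $(rx_i+y_j)\cdot 0+\frac{\rho^{\p}}{\p-1}\,0^{1-\p}\,(\cdots)=0$, since $1-\p>0$. Therefore $H^{\downarrow}\le0$.

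The only point needing care is the existence and positivity of $p_{\min}$, which is what makes the case split in \cref{Eq: H up} and \cref{Eq: H down} consistent. I expect this to be the main (mild) obstacle, and it is settled by the limits computed in the first paragraph. It should also be noted that "increasing/decreasing" is meant in the non-strict sense, since $H^{\uparrow}$ and $H^{\downarrow}$ are flat on part of their domains.
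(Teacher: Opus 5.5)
Your proof is correct and follows the same route as the paper. The paper's proof consists only of the observation that strict convexity makes $p\mapsto H(x_i,y_j,V_{i,j},p)$ decreasing on $[0,p_{\min})$ and increasing on $[p_{\min},+\infty)$, with the rest read off \cref{Eq: H up}, \cref{Eq: H down} and \cref{Eq: H num}. Your version makes this explicit via $H^{\uparrow}=H(\cdot,\max(\boldsymbol{p}^F,p_{\min}))$ and $H^{\downarrow}=H(\cdot,\min(\boldsymbol{p}^B,p_{\min}))$, and adds the check $H^{\downarrow}(x_i,y_j,V_{i,j},\boldsymbol{p}^B)\le H(x_i,y_j,V_{i,j},0)=0$ for \cref{Eq: Hdown leq 0}, which the paper leaves implicit.
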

\begin{proof}
From the strict convexity of $H\lc x_i,y_j,V_{i,j},p\rc$ w.r.t. $p$ and \cref{Eq: Hmin}, we deduce for each $(x_i,y_j,V_{i,j})$, $H\lc x_i,y_j,V_{i,j},\cdot\rc$ is increasing in the domain $[{p}_{\min},+\infty)$ and decreasing in $[0,{p}_{\min})$. 
\end{proof}
In the design of Howard type algorithms, it will be crucial to take advantage of the following relationship between the discrete Hamiltonian and the utility function \cref{Eq: H from F}:
\beq\label{Eq: sF sB}
s^{F}_{i,j}=rx_i+y_j-c^{F}_{i,j},\quad s^{B}_{i,j}=rx_i+y_j-c^{B}_{i,j},
\eeq
\beq\label{Eq: H F in p}
      \begin{aligned}
{H}^{\uparrow}\lc x_i,y_j,V_{i,j},\boldsymbol{p}^{F}\rc&=\sup_{c^F_{i,j}\in [0,rx_i+y_j]}\left\{s^F_{i,j}\boldsymbol{p}^{F}+\mathcal{F}\lc c^F_{i,j},V_{i,j}\rc\right\},\\
{H}^{\downarrow} \lc x_i,y_j,V_{i,j},\boldsymbol{p}^{B}\rc&=\sup_{c^B_{i,j}\geq rx_i+y_j}\left\{s^B_{i,j}\boldsymbol{p}^{B}+\mathcal{F}\lc c^B_{i,j},V_{i,j}\rc\right\}.
\end{aligned}
\eeq
Next, we denote the consumption associated to null savings by:
\beq\label{Eq: def cbar}
\bar{c}_{j}(x_i)=rx_i+y_j.
\eeq
A straightforward computation yields
$$
{H}_{\min}\lc x_i,y_j,V_{i,j}\rc =\mathcal{F}\lc \bar{c}_{j}(x_i),V_{i,j}\rc.
$$\par
We denote the forward and backward finite difference operators by
\beq\label{Eq: FD}
\D^+{V_{i,j}}=\frac{V_{i+1,j}-V_{i,j}}{\D x},\,\,\forall \,\,0\leq i<I\,\,{\text{and}}\,\, \D^-{V_{i,j}}=\frac{V_{i,j}-V_{i-1,j}}{\D x},\,\,\forall \,\,0< i\leq I.
\eeq
For a function $v$ defined on $[\ux,+\infty)$, we denote for $x\in \mathcal{G}^{\D x}$,
\beq\label{Eq: FD v}
\begin{aligned}
&\D^+{v(x)}=\frac{v(x+\D x)-v(x)}{\D x},\,\,{\rm{if}}\,\,\ux<\bar{x},\\
\quad &\D^-{v(x)}=\frac{v(x)-v(x-\D x)}{\D x},\,\,{\rm{if}}\,\,\ux< x\leq \bar{x}.
 \end{aligned}
\eeq
We can set $\D^+{V_{I,j}}$, $\D^-{V_{0,j}}$, $\D^+{v(\bar{x})}$, $\D^-{v(\ux)}$ to arbitrary constants since their values have no importance in what follows..  \par
The discrete version of the HJB equation \cref{HJB} is, for $i=0,\cdots,I$,
\beq\label{Eq: D-HJB H}
\left\{
      \begin{aligned}
\quad &\frac{\rho}{\theta} V_{i,1}
=\boldsymbol{H}\lc x_i,y_1,V_{i,1},\D^+{V_{i,1}},\D^-{V_{i,1}}\rc
+\lambda_1\lc V_{i,2}-V_{i,1} \rc,\\
&\frac{\rho}{\theta} V_{i,2}
=\boldsymbol{H}\lc x_i,y_2,V_{i,2},\D^+{V_{i,2}},\D^-{V_{i,2}}\rc
+\lambda_2\lc V_{i,1}-V_{i,2} \rc.
     \end{aligned}
   \right.
\eeq
Observe that by design of $\boldsymbol{H}$ in \cref{Eq: H num} at $x_i=\ux$ and $x_i=\bar{x}$, the equation \cref{Eq: D-HJB H} at $i=0$ and $i=I$ have taken into account the state constraints because $H^\uparrow$  (resp. $H^\downarrow$) corresponds to nonnegative (nonpositive) savings. If $\D^-{V_{i,j}}>0$ for all $1\leq i\leq I$ (this will be proved in \cref{Prop: positive Dv} below), then from the first order condition of optimality, we deduce the numerical optimal consumption 
\beq\label{Eq: CF CB}
\left\{
\begin{aligned}
\quad &c^{F,*}_{i,j}=\min\left\{\rho^{\p}\lc \D^+{V_{i,j}}\rc^{-\p}\lc(1-\g)V_{i,j}\rc^{\frac{1-\g \p}{1-\g}},\bar{c}_{j}(x_i)\right\}\quad \text{for}\,\,x_i<\bar{x},\\
&c^{B,*}_{i,j}=\max\left\{\rho^{\p}\lc \D^-{V_{i,j}}\rc^{-\p}\lc(1-\g)V_{i,j}\rc^{\frac{1-\g \p}{1-\g}},\bar{c}_{j}(x_i)\right\}\quad \text{for}\,\,x_i>\ux,\\
&c^{B,*}_{0,j}=r\ux+y_j,\quad c^{F,*}_{I,j}=r\bar{x}+y_j.
\end{aligned}
   \right.
\eeq
An equivalent formulation of \cref{Eq: D-HJB H} is then:
\beq\label{Eq: D-HJB F}
\left\{
      \begin{aligned}
\quad \frac{\rho}{\theta} V_{i,1}
={}&s^{F,*}_{i,1}\D^+{V_{i,1}}+s^{B,*}_{i,1}\D^-{V_{i,1}}+\mathcal{F}\lc c^{F,*}_{i,1},V_{i,1}\rc+\mathcal{F}\lc c^{B,*}_{i,1},V_{i,1}\rc\\
&-\mathcal{F}\lc \bar{c}_{i,1},V_{i,1}\rc+\lambda_1\lc V_{i,2}-V_{i,1} \rc,\\
\frac{\rho}{\theta} V_{i,2}
={}& s^{F,*}_{i,2}\D^+{V_{i,2}}+s^{B,*}_{i,2}\D^-{V_{i,2}}+\mathcal{F}\lc c^{F,*}_{i,1},V_{i,2}\rc+\mathcal{F}\lc c^{B,*}_{i,1},V_{i,2}\rc\\
&-\mathcal{F}\lc \bar{c}_{i,2},V_{i,2}\rc+\lambda_2\lc V_{i,1}-V_{i,2} \rc,
     \end{aligned}
   \right.
\eeq
where $\mathcal{F}$ is defined as in \cref{EZ flow}, $s^{F,*}_{i,j}=rx_i+y_j-c^{F,*}_{i,j}$ and $s^{B,*}_{i,j}=rx_i+y_j-c^{B,*}_{i,j}$. By construction $s^{F,*}_{i,j}\geq 0$ and $s^{B,*}_{i,j}\leq 0$, hence the scheme is in upwind form.\par
With $\bar{c}_j$ defined in \cref{Eq: def cbar}, and given $\eps>0$, let us introduce the following regularized consumption policies: 
\beq\label{Eq: CF CB proj}
\begin{aligned}
\left\{
\begin{aligned}
\quad  &c^{F,*}_{i,j}=\min\left\{\rho^{\p}\lc \D^+{V_{i,j}}\rc_+^{-\p}\lc(1-\g)V_{i,j}\rc^{\frac{1-\g \p}{1-\g}},\bar{c}_{j}(x_i)\right\}\quad \text{for}\,\,x_i<\bar{x},\\
&c^{B,*}_{i,j}=\max\left\{\min\left\{1/\eps,\rho^{\p}\lc \D^-{V_{i,j}}\rc_+^{-\p}\lc(1-\g)V_{i,j}\rc^{\frac{1-\g \p}{1-\g}}\right\},\bar{c}_{j}(x_i)\right\}\quad \text{for}\,\,x_i>\ux,\\
&c^{B,*}_{0,j}=r\ux+y_j,\quad c^{F,*}_{I,j}=r\bar{x}+y_j.
\end{aligned}
   \right.
   \end{aligned}
\eeq
Let us introduce the regularized Hamiltonian, with $\left(c^{F,*},c^{B,*}\right)$ and $V$ satisfying \cref{Eq: CF CB proj}:
\beq\label{Eq: HR F}
\begin{aligned}
&\boldsymbol{H}_{\eps}\lc x_i,y_j,V_{i,j},\boldsymbol{p}^{F},\boldsymbol{p}^{B}\rc\\
={}&
s^{F,*}_{i,j}\boldsymbol{p}^{F},+s^{B,*}_{i,j}\boldsymbol{p}^{B}+\mathcal{F}\lc c^{F,*}_{i,j},V_{i,j}\rc+\mathcal{F}\lc c^{B,*}_{i,j},V_{i,j}\rc-\mathcal{F}\lc \bar{c}_{j}(x_i),V_{i,j}\rc.
\end{aligned}
\eeq
 We introduce a regularized version of \cref{Eq: D-HJB H}:
\beq\label{Eq: D-HJB HR}
\left\{
      \begin{aligned}
\quad &\frac{\rho}{\theta} V_{i,1}
=\boldsymbol{H}_{\eps}\lc x_i,y_1,V_{i,1},\D^+{V_{i,1}},\D^-{V_{i,1}}\rc
+\lambda_1\lc V_{i,2}-V_{i,1} \rc,\\
&\frac{\rho}{\theta} V_{i,2}
=\boldsymbol{H}_{\eps}\lc x_i,y_2,V_{i,2},\D^+{V_{i,2}},\D^-{V_{i,2}}\rc
+\lambda_2\lc V_{i,1}-V_{i,2} \rc,
     \end{aligned}
   \right.
\eeq
supposing $\lc(1-\g)V_{i,j}\rc^{\frac{1-\g \p}{1-\g}}$ is bounded (we shall see later that this property is true, uniformly with respect to $\eps$). Throughout the rest of the present paper, we assume
\beq\label{Eq: eps lower}
1/{\eps}>r\bar{x}+y_2.
\eeq
We observe that the regularized Hamiltonian in \cref{Eq: D-HJB HR} is defined even if $\D^-{V_{i,j}}<0$. We also set
\beq\label{Eq: H from F eps}
      \begin{aligned}
{H}^{\uparrow}_{\eps}\lc x_i,y_j,V_{i,j},\D^+{V_{i,j}}\rc&={H}^{\uparrow}\lc x_i,y_j,V_{i,j},\D^+{V_{i,j}}\rc,\\
{H}^{\downarrow}_{\eps} \lc x_i,y_j,V_{i,j},\D^-{V_{i,j}}\rc&=\sup_{rx_i+y_j\leq c^B_{i,j}\leq 1/\eps}\left\{s^B_{i,j}\D^-{V_{i,j}}+\mathcal{F}\lc c^B_{i,j},V_{i,j}\rc\right\}.
\end{aligned}
\eeq
This strategy of regularizing the Hamiltonian by truncations has been used in \cite{lauriere2023policy,tang2023learning}.\par
\begin{remark}
We observe that the FPK equations \cref{MFG} $(ii)$ are linear, hence their numerical approximation is standard and will not be studied in details here. We restrict ourselves to saying that, defining $(s^F,s^B)$ as in \cref{Eq: CF CB proj}, the discrete scheme for \cref{MFG} $(ii)$ is in the form: 
\beq\label{Eq: FPK scheme}
\frac{s^{F,*}_{i,j}g_{i,j}-s^{B,*}_{i,j}g_{i,j}}{\D x}+\lambda_jg_{i,j}=\frac{s^{F,*}_{i-1,j}g_{i-1,j}-s^{B,*}_{i+1,j}g_{i+1,j}}{\D x}+\lambda_{\bar \jmath}g_{i,\bar \jmath}.
\eeq
For $x_i>\ux$, $g_{i,j}$ approximates the density $g_j(x_i)$. If $m_j$ exhibits a Dirac mass at $\ux$, then its weight $\mu_j$ is approximated by $g_{0,j}\D x$. 
\end{remark}
\subsection{First elements in the analysis of the scheme}
We first give some results on the scheme which hold for all $\theta>0$. \Cref{Sec: late} and \Cref{Sec: early} will respectively contain results particular to the cases $\theta\ge 1$ and $0<\theta<1$.

\begin{lemma}\label{Lemma: num H_V}
Assume $\theta > 1$ (resp., $0<\theta<1$). The numerical Hamiltonian $\boldsymbol{H}\lc x_i,y_j,V_{i,j},\boldsymbol{p}^{F},\boldsymbol{p}^{B}\rc$ is decreasing (resp., increasing) in $V_{i,j}$: if $U_{i,j}< V_{i,j}<0$ then 
$
\boldsymbol{H}\lc x_i,y_j,V_{i,j},\boldsymbol{p}^{F},\boldsymbol{p}^{B}\rc<\,\, (resp., >)\,\, \boldsymbol{H}\lc x_i,y_j,U_{i,j},\boldsymbol{p}^{F},\boldsymbol{p}^{B}\rc.
$
\end{lemma}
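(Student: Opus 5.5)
The plan is to reduce the statement to the monotonicity of $\mathcal{F}$ in $v$, \cref{F monotone}, using the representation \cref{Eq: H F in p} of $H^\uparrow$ and $H^\downarrow$ as suprema over consumption sets that do not depend on $V_{i,j}$. The term $H_{\min}=\mathcal{F}(\bar c_j(x_i),V_{i,j})$ enters the interior formula in \cref{Eq: H num} with a minus sign, so it must be handled jointly with one of the other two terms.

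First consider the boundary nodes. At $x_i=\ux$, $\boldsymbol{H}=H^\uparrow$, and by \cref{Eq: H F in p} this is a supremum over $c\in[0,rx_i+y_j]$ of $s\boldsymbol{p}^F+\mathcal{F}(c,V_{i,j})$. The value $c=0$ gives $-\infty$ because $1-\p^{-1}<0$, so the supremum is effectively over a compact subinterval of $(0,rx_i+y_j]$ and is attained at $c^{F,*}$. Take $\theta>1$ and $U_{i,j}<V_{i,j}<0$, and let $c^*$ be optimal for $V_{i,j}$. Then
\[
H^\uparrow(V)=s^*\boldsymbol{p}^F+\mathcal{F}(c^*,V)<s^*\boldsymbol{p}^F+\mathcal{F}(c^*,U)\le H^\uparrow(U),
\]
where the strict inequality is \cref{F monotone} at the fixed $c^*>0$. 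The case $0<\theta<1$ is symmetric: take $c^*$ optimal for $U$. At $x_i=\bar x$ the same argument applies to $H^\downarrow$, a supremum over $c\ge rx_i+y_j$. If $\boldsymbol{p}^B<0$, both sides equal $+\infty$; presumably the statement is meant for $\boldsymbol{p}^B\ge 0$, where the supremum is finite and attained at $c^{B,*}$ by strict concavity and coercivity in $c$. The degenerate situation $\boldsymbol{p}^B=0$ needs a separate check, and I would treat it using the explicit formula.

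The interior nodes are the main obstacle, since
\[
\boldsymbol{H}=H^\uparrow+H^\downarrow-H_{\min}
\]
and the term $-H_{\min}$ has the opposite monotonicity. The key observation is that, for fixed $\boldsymbol{p}^F,\boldsymbol{p}^B$, at most one of $H^\uparrow$, $H^\downarrow$ differs from $H_{\min}$, unless both are active. I would split into cases according to the positions of $\boldsymbol{p}^F$ and $\boldsymbol{p}^B$ relative to $p_{\min}(V)$ and $p_{\min}(U)$; these thresholds depend on $V$, which is the delicate point. The cleanest route is to write each of $H^\uparrow-H_{\min}$ and $H^\downarrow-H_{\min}$ separately. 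Using \cref{Eq: H F in p},
\[
\boldsymbol{H}=\sup_{c^F\le \bar c}\{s^F\boldsymbol{p}^F+\mathcal{F}(c^F,V)\}+\sup_{c^B\ge \bar c}\{s^B\boldsymbol{p}^B+\mathcal{F}(c^B,V)\}-\mathcal{F}(\bar c,V).
\]
Fixing the optimal pair $(c^{F,*},c^{B,*})$ for $V$ and evaluating at $U$, I need
\[
\mathcal{F}(c^{F,*},V)+\mathcal{F}(c^{B,*},V)-\mathcal{F}(\bar c,V)<\mathcal{F}(c^{F,*},U)+\mathcal{F}(c^{B,*},U)-\mathcal{F}(\bar c,U)
\]
for $\theta>1$. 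Since $\mathcal{F}(c,v)=k\,c^{1-\p^{-1}}\,((1-\g)v)^{1-\theta}$ with $k<0$ factorizes, this reduces to a sign condition on
\[
(c^{F,*})^{1-\p^{-1}}+(c^{B,*})^{1-\p^{-1}}-\bar c^{\,1-\p^{-1}}.
\]
That sign condition does not hold in general, so the choice of competitor pair must use that one of $c^{F,*}$, $c^{B,*}$ equals $\bar c$ whenever the other is not. When both are active, I would instead use the explicit closed form \cref{def H}: $H$ is a sum of a term linear in $p$, independent of $v$, and $\frac{\rho^\p}{\p-1}p^{1-\p}((1-\g)v)^{\frac{1-\g\p}{1-\g}}$. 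Then the $v$-dependence of $\boldsymbol{H}$ is monotone via \cref{Eq:Hv>0}, with the combination $p_F^{1-\p}+p_B^{1-\p}-p_{\min}^{1-\p}$ and a factor also contributed by $H_{\min}$. I expect this case check, where both upwind directions are active, to be the hardest step.
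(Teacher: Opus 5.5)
\textbf{The gap is in the interior case.} Your reduction at interior nodes is the right one, and it is the paper's. Take the maximizers $(c^{F,*},c^{B,*})$ for $V_{i,j}$ in the sup representation \cref{Eq: H F in p} (for $0<\theta<1$, swap the roles of $U$ and $V$). Use them as admissible competitors for $U_{i,j}$ and compare the $\mathcal{F}$-terms. The error is your claim that the resulting sign condition ``does not hold in general''; in fact it always holds. Set $a=1-\psi^{-1}<0$, $k=\rho/(1-\psi^{-1})<0$ and $g(v)=((1-\gamma)v)^{1-\theta}$. Then
\[
\mathcal{F}(c^{F,*},v)+\mathcal{F}(c^{B,*},v)-\mathcal{F}(\bar c_j(x_i),v)
= k\Bigl[\bigl((c^{F,*})^{a}-\bar c_j(x_i)^{a}\bigr)+(c^{B,*})^{a}\Bigr]\,g(v).
\]
Since $0<c^{F,*}\le \bar c_j(x_i)$ and $a<0$, the first group is nonnegative, so the bracket is at least $(c^{B,*})^{a}>0$. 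The left side is therefore a fixed negative constant times $g(v)$. Because $g$ is strictly increasing on $v<0$ when $\theta>1$ (strictly decreasing when $0<\theta<1$), the strict inequality follows at once. No case split on which of $H^\uparrow$, $H^\downarrow$ is active is needed, and the $v$-dependent thresholds $p_{\min}$ never enter. This is exactly the paper's argument. It pairs $-\mathcal{F}(\bar c_j(x_i),\cdot)$ with $\mathcal{F}(c^{F,*},\cdot)$, which is nonincreasing in $v$ because $c^{F,*}\le\bar c_j(x_i)$, and uses the strict decrease of $\mathcal{F}(c^{B,*},\cdot)$ on its own. The sign only looks indeterminate if you pair $-\mathcal{F}(\bar c_j(x_i),\cdot)$ with $\mathcal{F}(c^{B,*},\cdot)$; that pairing does have the wrong monotonicity, since $c^{B,*}\ge \bar c_j(x_i)$.

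\textbf{Consequence, and the degenerate cases.} Because of that mistaken claim, you abandon a working argument for a case analysis you never carry out. Your ``key observation'' (at most one of $H^\uparrow,H^\downarrow$ differs from $H_{\min}$ unless both do) carries no information. The closed-form route through \cref{def H} is left as ``the hardest step''. It would also be clumsy, because $p_{\min}$ and $H_{\min}$ depend on $v$, and $H_{\min}$ involves a different power of $(1-\gamma)v$ than the $p^{1-\psi}$ term. As written, the interior case is not proved. Your boundary treatment is fine. Your worry about $\boldsymbol{p}^B\le 0$ is also legitimate: no maximizer $c^{B,*}$ exists there, and strictness genuinely fails. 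Away from $\ux$, both sides are $+\infty$ if $\boldsymbol{p}^B<0$, and at $x_I$ both sides equal $0$ if $\boldsymbol{p}^B=0$. The paper's proof tacitly assumes $\boldsymbol{p}^B>0$. Only the non-strict inequality, which survives in these degenerate cases, is used later in the paper.
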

\begin{proof}
We only give the proof in the case $\theta \geq 1$, the proof in the other case being very similar. Let $c^{F,*}_{i,j}$ and $c^{B,*}_{i,j}$ be the maximizers given by \cref{Eq: H F in p}.
If $\ux<x_i<\bar{x}$, then
$
\mathcal{F}\lc c^{B,*}_{i,j},V_{i,j}\rc< \mathcal{F}\lc c^{B,*}_{i,j},U_{i,j}\rc
$
follows from $U_{i,j}< V_{i,j}$ and \cref{F monotone}. From \cref{Eq: H F in p} we know $c^{F,*}_{i,j}\leq \bar{c}_{j}(x_i)$, a straightforward computation with \cref{EZ flow} leads to
$$
\mathcal{F}\lc c^{F,*}_{i,j},V_{i,j}\rc-\mathcal{F}\lc \bar{c}_{j}(x_i),V_{i,j}\rc \leq \mathcal{F}\lc c^{F,*}_{i,j},U_{i,j}\rc-\mathcal{F}\lc \bar{c}_{j}(x_i),U_{i,j}\rc.
$$
This yields 
\beq
\begin{aligned}
&\boldsymbol{H}\lc x_i,y_j,V_{i,j},\boldsymbol{p}^{F},\boldsymbol{p}^{B}\rc\\
< {}&
s^{F,*}_{i,j}\boldsymbol{p}^{F}+s^{B,*}_{i,j}\boldsymbol{p}^{B}+\mathcal{F}\lc c^{B,*}_{i,j},U_{i,j}\rc+\mathcal{F}\lc c^{F,*}_{i,j},U_{i,j}\rc-\mathcal{F}\lc \bar{c}_{j}(x_i),U_{i,j}\rc\\
\leq {}& \boldsymbol{H}\lc x_i,y_j,U_{i,j},\boldsymbol{p}^{F},\boldsymbol{p}^{B}\rc.
\end{aligned}
\eeq
At the boundaries $x_i=\ux$ or $x_i=\bar{x}$, the conclusion is obvious from \cref{Eq: H up} and \cref{Eq: H down}. 
\end{proof}
The same result holds for the regularized Hamiltonian defined in \cref{Eq: HR F}, the proof being essentially the same as that of \cref{Lemma: num H_V}.
\begin{lemma}\label{Lemma: num H_Veps}
Assume $\theta > 1$ (resp., $0<\theta<1$). The numerical Hamiltonian $\boldsymbol{H}_{\eps}\lc x_i,y_j,V_{i,j},\boldsymbol{p}^{F},\boldsymbol{p}^{B}\rc$ is decreasing (resp., increasing) in $V_{i,j}$: if $U_{i,j}< V_{i,j}<0$ then 
$
\boldsymbol{H}_{\eps}\lc x_i,y_j,V_{i,j},\boldsymbol{p}^{F},\boldsymbol{p}^{B}\rc<\,\, (resp., >)\,\, \boldsymbol{H}_{\eps}\lc x_i,y_j,U_{i,j},\boldsymbol{p}^{F},\boldsymbol{p}^{B}\rc.
$
\end{lemma}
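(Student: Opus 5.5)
We will follow the proof of \cref{Lemma: num H_V}, working directly from the explicit formula \cref{Eq: HR F}. The key observation is that the dependence of $\boldsymbol{H}_{\eps}$ on $V_{i,j}$ enters in two ways: explicitly, through the terms $\mathcal{F}(\cdot,V_{i,j})$, and implicitly, through the truncated consumptions $c^{F,*}_{i,j},c^{B,*}_{i,j}$ in \cref{Eq: CF CB proj}. To handle the implicit dependence, we first rewrite $\boldsymbol{H}_{\eps}$ as a supremum, in the spirit of \cref{Eq: H F in p} and \cref{Eq: H from F eps}.

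\textbf{Step 1: variational representation.} For $\ux<x_i<\bar{x}$ we will show that
\[
\boldsymbol{H}_{\eps}\lc x_i,y_j,V_{i,j},\boldsymbol{p}^{F},\boldsymbol{p}^{B}\rc=\sup_{0\le c^F\le \bar c_j(x_i)}\Big\{s^F\boldsymbol{p}^{F}+\mathcal{F}(c^F,V_{i,j})\Big\}+\sup_{\bar c_j(x_i)\le c^B\le 1/\eps}\Big\{s^B\boldsymbol{p}^{B}+\mathcal{F}(c^B,V_{i,j})\Big\}-\mathcal{F}(\bar c_j(x_i),V_{i,j}).
\]
For $\theta>0$ and $\p<1$, the map $c\mapsto \mathcal{F}(c,V)-cp$ is strictly concave in $c$. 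Its unconstrained maximizer is $\rho^{\p}p^{-\p}((1-\g)V)^{\frac{1-\g\p}{1-\g}}$ when $p>0$. When $p\le 0$ the map is increasing in $c$, and the formula with $(p)_+^{-\p}=+\infty$ correctly sends the maximizer to the upper end of the interval. Projecting this maximizer onto $[0,\bar c_j]$, respectively $[\bar c_j,1/\eps]$, gives exactly \cref{Eq: CF CB proj}. The assumption \cref{Eq: eps lower} guarantees the second interval is nonempty. Checking this identification, including the case $\boldsymbol{p}^{B}\le 0$ where the $1/\eps$ truncation is active, is the main point requiring care. At $x_i=\ux$ (resp. $x_i=\bar x$) only the forward (resp. backward) supremum is present, since the other consumption is frozen at $\bar c_j$ and its term cancels against $-\mathcal{F}(\bar c_j,V_{i,j})$.

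\textbf{Step 2: comparison.} Take $U_{i,j}<V_{i,j}<0$ and $\theta>1$. Freeze the maximizers $c^{F,*},c^{B,*}$ computed at $V_{i,j}$. For the backward part, $\mathcal{F}(c^{B,*},V)<\mathcal{F}(c^{B,*},U)$ by \cref{F monotone}. For the forward part combined with $-\mathcal{F}(\bar c_j,\cdot)$, we use the factorisation
\[
\mathcal{F}(c,V)=\frac{\rho}{1-\p^{-1}}c^{1-\p^{-1}}((1-\g)V)^{1-\theta}.
\]
The difference $\mathcal{F}(c^{F,*},V)-\mathcal{F}(\bar c_j,V)$ therefore equals $\frac{\rho}{1-\p^{-1}}\big[(c^{F,*})^{1-\p^{-1}}-\bar c_j^{1-\p^{-1}}\big]$, which is $\le 0$ because $c^{F,*}\le\bar c_j$ and $1-\p^{-1}<0$, multiplied by the positive factor $((1-\g)V)^{1-\theta}$. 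For $\theta>1$ this factor increases as $V\uparrow 0$, since $(1-\g)V$ decreases towards $0$ and the exponent $1-\theta<0$. Hence the difference is no larger at $V$ than at $U$.

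Summing the two parts and bounding the frozen-policy expression at $U$ by the supremum from Step 1 yields the strict inequality $\boldsymbol{H}_{\eps}(\cdot,V_{i,j},\cdot)<\boldsymbol{H}_{\eps}(\cdot,U_{i,j},\cdot)$. The boundary indices are treated identically, using just one of the two parts. At $x_i=\ux$ strictness again comes from the factor $((1-\g)V)^{1-\theta}$ multiplying $(c^{F,*})^{1-\p^{-1}}$ alone, after cancellation. The case $0<\theta<1$ follows with all inequalities reversed: there the factor has exponent $1-\theta>0$, so it decreases in $V$. The bound $1/\eps$ plays no role beyond keeping the sup finite and the maximizer well defined.
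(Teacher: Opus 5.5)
Your proposal is correct and takes essentially the paper's route. The paper only says the proof is that of \cref{Lemma: num H_V}: supremum representation, maximizers frozen at $V_{i,j}$, \cref{F monotone} on the backward term, the factorization for $\mathcal{F}(c^{F,*},\cdot)-\mathcal{F}(\bar c_j(x_i),\cdot)$, and a bound by the supremum at $U_{i,j}$; your Step 1 usefully makes explicit that the truncated controls \cref{Eq: CF CB proj} are the constrained maximizers, which the paper leaves implicit. One small precision for $0<\theta<1$: do not literally reverse the inequalities, but freeze the maximizers computed at $U_{i,j}$ and evaluate that frozen expression at $V_{i,j}$, which gives $\boldsymbol{H}_{\eps}(x_i,y_j,U_{i,j},\boldsymbol{p}^{F},\boldsymbol{p}^{B})<(\text{frozen expression at }V_{i,j})\le\boldsymbol{H}_{\eps}(x_i,y_j,V_{i,j},\boldsymbol{p}^{F},\boldsymbol{p}^{B})$.
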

We now introduce the discrete sub and supersolution to \cref{Eq: D-HJB H}, analogous to the functions introduced in \cref{def vis sol}. 
\begin{definition}
We say that $\mathsf{U}=(\mathsf{U}_1,\mathsf{U}_2)$ is a discrete subsolution of \cref{Eq: D-HJB H} if for all $i,j$ such that $i=0,\cdots,I$ and $j\in \{1,2\}$,
\beq\label{Ineq: sub}
\frac{\rho}{\theta} \mathsf{U}_{i,j}\leq \boldsymbol{H}\lc x_i,y_j,\mathsf{U}_{i,j},\D^+{\mathsf{U}_{i,j}},\D^-{\mathsf{U}_{i,j}}\rc+\lambda_j(\mathsf{U}_{i,\bar \jmath}-\mathsf{U}_{i,j}).
\eeq 
Respectively, $\mathsf{V}=(\mathsf{V}_1,\mathsf{V}_2)$ is a discrete supersolution of \cref{Eq: D-HJB H} if for all $i,j$ such that $i=0,\cdots,I$ and $j\in \{1,2\}$, 
\beq\label{Ineq: super}
\frac{\rho}{\theta} \mathsf{V}_{i,j}\geq \boldsymbol{H}\lc x_i,y_j,\mathsf{V}_{i,j},\D^+{\mathsf{V}_{i,j}},\D^-{\mathsf{V}_{i,j}}\rc+\lambda_j(\mathsf{V}_{i,\bar \jmath}-\mathsf{V}_{i,j}).
\eeq 
\end{definition}
In what follows, we introduce discrete barriers as in \cref{Prop:sub-super}. They will provide bounds on the numerical solutions.  
\begin{proposition}\label{Prop:sub-super disc}
With the constant $b$ defined in \cref{tab1}, we consider the grid functions
$$
\check{U}_{i,1}=\check{U}_{i,2}=\frac{(rx_i+y_1)^{1-\gamma}}{1-\gamma},\quad \check{V}_{i,1}=\check{V}_{i,2}=\frac{(b(x_i+y_2/r))^{1-\gamma}}{1-\gamma}.
$$ 
The pairs $(\check{U}_{i,1},\check{U}_{i,2})$ and $(\check{V}_{i,1},\check{V}_{i,2})$ are respectively a sub- and a supersolution of \cref{Eq: D-HJB H}.
\end{proposition}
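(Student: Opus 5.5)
The plan is to use two facts: both barriers are independent of $j$, so the coupling terms $\lambda_j(\cdot_{\bar\jmath}-\cdot_j)$ vanish; and both barriers are increasing and concave in $x$. The verification then reduces to pointwise inequalities for the scalar discrete Hamiltonian $\boldsymbol{H}$. These will be compared with the continuous computations behind \cref{Prop:sub-super}, using the monotonicity of $\boldsymbol{H}$ in $(\boldsymbol{p}^F,\boldsymbol{p}^B)$ from \cref{Lemma: H monotone}.

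\emph{Subsolution $\check U$.} I will use the lower bound $\boldsymbol{H}\ge H_{\min}$. At interior nodes $H^\uparrow\ge H_{\min}$ and $H^\downarrow\ge H_{\min}$, since $\D^-\check U_{i,j}>0$ by monotonicity of $\check U$. At $x_0$ only $H^\uparrow$ appears, and at $x_I$ only $H^\downarrow$, each again $\ge H_{\min}$. So it suffices to check $\frac{\rho}{\theta}\check U_{i,j}\le H_{\min}(x_i,y_j,\check U_{i,j})=\mathcal F(rx_i+y_j,\check U_{i,j})$, which is the same as $f(rx_i+y_j,\check U_{i,j})\ge 0$. Write $(1-\g)\check U_{i,j}=(rx_i+y_1)^{1-\g}$. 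From \eqref{EZ}, $f(c,v)\ge0$ is equivalent to $\frac{c^{1-\p^{-1}}-((1-\g)v)^\theta}{1-\p^{-1}}\ge0$. Here $1-\p^{-1}<0$, and $((1-\g)\check U)^\theta=(rx_i+y_1)^{1-\p^{-1}}$. So we need $(rx_i+y_j)^{1-\p^{-1}}\le (rx_i+y_1)^{1-\p^{-1}}$, which holds because $y_j\ge y_1$ and the exponent is negative. Hence $\check U$ is a discrete subsolution at every node, boundaries included, with no finite-difference error at all.

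\emph{Supersolution $\check V$.} Here I need an upper bound on $\boldsymbol{H}$. Since $\check V$ is increasing and concave, $0<\D^+\check V_{i,j}\le D\check V(x_i)\le \D^-\check V_{i,j}$. By \cref{Lemma: H monotone}, $\boldsymbol{H}$ is increasing in $\boldsymbol{p}^F$ and decreasing in $\boldsymbol{p}^B$, so $\boldsymbol{H}(x_i,y_j,\check V_{i,j},\D^+\check V,\D^-\check V)\le \boldsymbol{H}(x_i,y_j,\check V_{i,j},p,p)$ with $p=D\check V(x_i)$. The boundary nodes need care, since only one of the two pieces appears there. For interior nodes, $\boldsymbol{H}(\cdot,p,p)=H^\uparrow(p)+H^\downarrow(p)-H_{\min}=H(x_i,y_j,\check V_{i,j},p)$, because exactly one of $H^\uparrow,H^\downarrow$ equals $H(p)$ and the other equals $H_{\min}$. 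At $x_0$ we get $H^\uparrow(p)\le \max(H(p),H_{\min})=H(p)$. At $x_I$ we get $H^\downarrow(p)\le H(p)$. In every case $\boldsymbol{H}\le H(x_i,y_j,\check V_{i,j},D\check V(x_i))$.

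It then remains to check that $\frac{\rho}{\theta}\check V(x)\ge H(x,y_j,\check V,D\check V)$ pointwise for $j=1,2$ and all $x\ge\ux$, with the coupling term absent. This is exactly the classical (and hence viscosity) supersolution computation of \cref{Prop:sub-super}, which I take as given. Since $\check V$ is smooth, that proposition gives the inequality pointwise in $(\ux,\infty)$. By continuity it extends to $x=\ux$, and $\bar x$ is interior to the domain where the inequality holds.

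The main obstacle is the supersolution boundary bookkeeping and the direction of the concavity inequalities. If the reduction to the continuous computation were not allowed, I would redo it directly: $D\check V=b(b(x+y_2/r))^{-\g}$, and substituting into \eqref{def H} the choice of $b$ makes the identity work with $rx+y_2$ dominating $rx+y_j$.
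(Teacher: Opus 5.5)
Your proposal is correct and follows essentially the same route as the paper. For the subsolution you use $\boldsymbol{H}\ge H_{\min}=\mathcal{F}(\bar c_j(x_i),\check U_{i,j})$, which is the paper's zero-saving control; for the supersolution you use concavity of $\check{\sv}$, monotonicity of $\boldsymbol{H}$ in $(\boldsymbol{p}^F,\boldsymbol{p}^B)$, and consistency at $(p,p)$. Your only deviations are small gains in rigor: you handle $j=1$ through the trivial continuous inequality $H(x,y_1,v,p)\le H(x,y_2,v,p)$ for $p>0$, where the paper asserts monotonicity of the discrete Hamiltonian in $y$ without proof (it needs a case analysis on $p_{\min}$); and you check the boundary nodes explicitly via $H^\uparrow(p)\le H(p)$ and $H^\downarrow(p)\le H(p)$, where the paper writes an equality that holds only at interior nodes.
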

\begin{proof}
{\it{Part 1: subsolution.}} A direct computation shows $\mathcal{F}\lc \bar{c}_{1}(x_i),\check{U}_{i,1}\rc=\frac{\rho}{\theta}\check{U}_{i,1}$. We first establish inequality \cref{Ineq: sub} at $j=1$.
Let us set $c^F_{i,1}=c^B_{i,1}=\bar{c}_1(x_i)$, hence $s^F_{i,1}=s^B_{i,1}=0$ if $0<i<I$. Similarly, we set $s^F_{0,1}=s^B_{I,1}=0$. Then,
\begin{align*}
&\boldsymbol{H}\lc x_i,y_1,\check{U}_{i,1},\D^+{\check{U}_{i,1}},\D^-{\check{U}_{i,1}}\rc\\
\geq  {}& s^F_{i,1}\D^+{\check{U}_{i,1}}+s^B_{i,1}\D^-{\check{U}_{i,1}}+\mathcal{F}\lc c^F_{i,1},\check{U}_{i,1}\rc+\mathcal{F}\lc c^B_{i,1},\check{U}_{i,1}\rc-\mathcal{F}\lc \bar{c}_{i,1},\check{U}_{i,1}\rc\\
={}&\mathcal{F}\lc \bar{c}_{1}(x_i),\check{U}_{i,1}\rc=\frac{\rho}{\theta}\check{U}_{i,1}.
\end{align*}
 Let us now turn to inequality \cref{Ineq: sub} at $j=2$. Set $c^F_{i,2}=c^B_{i,2}=\bar{c}_2(x_i)=rx_i+y_2$, hence $s^F_{i,2}=s^B_{i,2}=0$, then
$$
\boldsymbol{H}\lc x_i,y_2,\check{U}_{i,2},\D^+{\check{U}_{i,2}},\D^-{\check{U}_{i,2}}\rc\geq \mathcal{F}\lc \bar{c}_{2}(x_i),\check{U}_{i,2}\rc\geq \mathcal{F}\lc \bar{c}_{1}(x_i),\check{U}_{i,2}\rc\geq \frac{\rho}{\theta}\check{U}_{i,2}.
$$
Therefore, for both $j=1$ and $j=2$, $\lambda_j(\check{U}_{i,\bar \jmath}-\check{U}_{i,j})=0$, and 
$$
\frac{\rho}{\theta} \check{U}_{i,j}\leq \boldsymbol{H}\lc x_i,y_j,\check{U}_{i,j},\D^+{\check{U}_{i,j}},\D^-{\check{U}_{i,j}}\rc+\lambda_j(\check{U}_{i,\bar \jmath}-\check{U}_{i,j}).
$$
{\it{Part 2: supersolution.}} We aim at proving the inequality \cref{Ineq: super} at $j=2$. From $\check{V}_{i,j}=\check{\sv}_{j}(x_i)$, where $\check{\sv}_{j}$ is defined in \cref{Eq: sub super}, we know $\bar{p}_{j}\lc x_i,\check{\sv}_{2}(x_i)\rc=\bar{p}_{j}\lc x_i,\check{V}_{i,2}\rc$. Straightforward computation leads to
$
\frac{\rho}{\theta}\check{\sv}_{2}(x_i)=H(x_i,y_2,\check{\sv}_{2}(x_i),D\check{\sv}_{2}(x_i)),
$
for $x_i>\ux$. Since $\check{\sv}_{2}$ is strictly concave, we can infer that
\beq\label{Eq: Dv_2}
\D^+{\check{V}_{i,2}}<D\check{\sv}_{2}(x_i)<\D^-{\check{V}_{i,2}}.
\eeq
From \cref{Lemma: H monotone} and \cref{Eq: Dv_2}, it follows
\begin{align*}
H(x_i,y_2,\check{\sv}_{2}(x_i),D\check{\sv}_{2}(x_i))
={}&\boldsymbol{H}\lc x_i,y_2,\check{V}_{i,2},D\check{\sv}_{2}(x_i),D\check{\sv}_{2}(x_i)\rc \\
\geq {}&\boldsymbol{H}\lc x_i,y_2,\check{V}_{i,2},\D^+{\check{V}_{i,2}},\D^-{\check{V}_{i,2}}\rc,
\end{align*}
and we obtain
$$
\frac{\rho}{\theta}\check{V}_{i,2}\geq \boldsymbol{H}\lc x_i,y_2,\check{V}_{i,2},\D^+{\check{V}_{i,2}},\D^-{\check{V}_{i,2}}\rc.
$$
Finally we prove the inequality \cref{Ineq: super} at $j=1$. From $y_2>y_1$ and $\check{V}_{i,1}=\check{V}_{i,2}$, we immediately obtain that 
$$
\boldsymbol{H}\lc x_i,y_2,\check{V}_{i,2},\D^+{\check{V}_{i,2}},\D^-{\check{V}_{i,2}}\rc>\boldsymbol{H}\lc x_i,y_1,\check{V}_{i,1},\D^+{\check{V}_{i,1}},\D^-{\check{V}_{i,1}}\rc,
$$
hence 
$
\frac{\rho}{\theta}\check{V}_{i,1}\geq \boldsymbol{H}\lc x_i,y_1,\check{V}_{i,1},\D^+{\check{V}_{i,1}},\D^-{\check{V}_{i,1}}\rc.
$
\end{proof}
 Since for any $U_{i,j}$,
$
\boldsymbol{H}_{\eps}\lc x_i,y_j,U_{i,j},\D^+{U_{i,j}},\D^-{U_{i,j}}\rc \leq \boldsymbol{H}\lc x_i,y_j,U_{i,j},\D^+{U_{i,j}},\D^-{U_{i,j}}\rc,
$
we have the following results.
\begin{lemma}\label{Prop: sub super HR1}
A subsolution of \cref{Eq: D-HJB HR} is also a subsolution of \cref{Eq: D-HJB H}. A supersolution of \cref{Eq: D-HJB H} is a supersolution of \cref{Eq: D-HJB HR}.
\end{lemma}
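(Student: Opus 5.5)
The proof rests on the pointwise inequality stated just before the lemma: for every grid function $U$ and every $(i,j)$,
\[
\boldsymbol{H}_{\eps}\lc x_i,y_j,U_{i,j},\D^+{U_{i,j}},\D^-{U_{i,j}}\rc \leq \boldsymbol{H}\lc x_i,y_j,U_{i,j},\D^+{U_{i,j}},\D^-{U_{i,j}}\rc .
\]
Since this comparison is only asserted in the text, I will first justify it. Then both claims follow by chaining inequalities, because the zeroth-order terms $\frac{\rho}{\theta}U_{i,j}$ and $\lambda_j(U_{i,\bar\jmath}-U_{i,j})$ are identical in \cref{Eq: D-HJB H} and \cref{Eq: D-HJB HR}.

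To justify the inequality, I will use the sup-representations \cref{Eq: H F in p} and \cref{Eq: H from F eps}.

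\emph{Forward part.} The forward contribution of $\boldsymbol{H}_{\eps}$ is $s^{F,*}_{i,j}\D^+U_{i,j}+\mathcal{F}(c^{F,*}_{i,j},U_{i,j})$. By \cref{Eq: CF CB proj}, $c^{F,*}_{i,j}\in[0,rx_i+y_j]$, so this term is at most the supremum in \cref{Eq: H F in p}, namely $H^{\uparrow}$. At $i=I$ the choice $c^{F,*}=\bar c_j$ gives exactly $H_{\min}$, which cancels the $-H_{\min}$ term, consistent with \cref{Eq: H num}.

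\emph{Backward part.} By \cref{Eq: CF CB proj}, $c^{B,*}_{i,j}\in[\bar c_j(x_i),\max(1/\eps,\bar c_j(x_i))]=[\bar c_j(x_i),1/\eps]$, using \cref{Eq: eps lower}. So the backward contribution is at most $H^{\downarrow}_{\eps}\le H^{\downarrow}$, the second inequality because the admissible set $\{c\ge rx_i+y_j\}$ in \cref{Eq: H F in p} is larger. If $\D^-U_{i,j}<0$, then $H^{\downarrow}=+\infty$ and the inequality is trivial. At $i=0$, $c^{B,*}=\bar c_j$ again reproduces $H_{\min}$.

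\emph{Combination.} The $-\mathcal{F}(\bar c_j(x_i),U_{i,j})=-H_{\min}$ terms agree in both Hamiltonians. Summing the forward and backward bounds therefore gives $\boldsymbol{H}_{\eps}\le\boldsymbol{H}$ at interior and boundary nodes alike.

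Now let $\mathsf U$ be a subsolution of \cref{Eq: D-HJB HR}. Then
\[
\frac{\rho}{\theta}\mathsf U_{i,j}\le \boldsymbol{H}_{\eps}(\cdots)+\lambda_j(\mathsf U_{i,\bar\jmath}-\mathsf U_{i,j})\le \boldsymbol{H}(\cdots)+\lambda_j(\mathsf U_{i,\bar\jmath}-\mathsf U_{i,j}),
\]
which is \cref{Ineq: sub}. Similarly, if $\mathsf V$ is a supersolution of \cref{Eq: D-HJB H}, then
\[
\frac{\rho}{\theta}\mathsf V_{i,j}\ge\boldsymbol{H}(\cdots)+\lambda_j(\cdot)\ge\boldsymbol{H}_{\eps}(\cdots)+\lambda_j(\cdot),
\]
which is the supersolution inequality for \cref{Eq: D-HJB HR}. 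In the supersolution case $\boldsymbol{H}$ is finite, since the right-hand side is bounded above by $\frac{\rho}{\theta}\mathsf V_{i,j}$, so no infinite values arise.

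The only delicate step is checking that the truncations in \cref{Eq: CF CB proj}, namely the $(\cdot)_+$ and the cap $1/\eps$, always produce admissible controls for the sup-formulas. This is where \cref{Eq: eps lower} is needed: it guarantees $1/\eps\ge\bar c_j(x_i)$, so the backward control interval is nonempty and $\max\{\min\{1/\eps,\cdot\},\bar c_j\}$ lands in $[\bar c_j,1/\eps]$.
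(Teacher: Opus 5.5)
Your proof is correct and takes the same route as the paper, which derives the lemma directly from the pointwise inequality $\boldsymbol{H}_{\eps}\le\boldsymbol{H}$ stated just before it and then chains the sub- and supersolution inequalities. The paper asserts that inequality without proof; your argument supplies it, using the sup-representations, the admissibility of the truncated controls (where $1/\eps>r\bar{x}+y_2$ is used), and a separate check of the boundary nodes.
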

\begin{proposition}\label{Prop: sub super HR2}
The grid functions $\check{U}$ and $\check{V}$, defined in \cref{Prop:sub-super disc}, are respectively a constrained sub- and supersolution of \cref{Eq: D-HJB HR}.  
\end{proposition}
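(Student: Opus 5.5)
The supersolution part follows at once from what precedes. By \cref{Prop:sub-super disc}, $\check V$ is a supersolution of \cref{Eq: D-HJB H}, and \cref{Prop: sub super HR1} says that any supersolution of \cref{Eq: D-HJB H} is a supersolution of \cref{Eq: D-HJB HR}. The underlying inequality is $\boldsymbol{H}_{\eps}\le\boldsymbol{H}$, which holds because the regularized Hamiltonian is a supremum over a smaller set of consumptions. Since $\check V$ is increasing, $\D^\pm \check V_{i,j}>0$ and the positive parts in \cref{Eq: CF CB proj} play no role. So $\check V$ needs no further work.

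For the subsolution part the inclusion goes the wrong way: \cref{Prop: sub super HR1} only transfers subsolutions from \cref{Eq: D-HJB HR} to \cref{Eq: D-HJB H}. I would therefore redo Part 1 of the proof of \cref{Prop:sub-super disc} directly for $\boldsymbol{H}_\eps$. The key observation is that the comparison there only used the admissible choice $c^F_{i,j}=c^B_{i,j}=\bar c_j(x_i)$, which gives zero savings. This choice is still admissible in the truncated problem, since \cref{Eq: eps lower} gives $\bar c_j(x_i)=rx_i+y_j\le r\bar x+y_2<1/\eps$, so $\bar c_j(x_i)\in[rx_i+y_j,1/\eps]$. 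With the representation \cref{Eq: H from F eps}, at interior nodes we obtain
$$\boldsymbol{H}_\eps\big(x_i,y_j,\check U_{i,j},\D^+\check U_{i,j},\D^-\check U_{i,j}\big)\ge H^\uparrow_\eps+H^\downarrow_\eps-H_{\min}\ge \mathcal F\big(\bar c_j(x_i),\check U_{i,j}\big).$$
At $i=0$ the scheme uses only $H^\uparrow$, and at $i=I$ it uses only the truncated $H^\downarrow_\eps$, which contains $\bar c_j(x_I)$. The boundary nodes are therefore handled identically.

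The chain of inequalities from Part 1 then carries over unchanged. For $j=1$ we have $\mathcal F(\bar c_1(x_i),\check U_{i,1})=\frac{\rho}{\theta}\check U_{i,1}$. For $j=2$ we have $\mathcal F(\bar c_2(x_i),\check U_{i,2})\ge \mathcal F(\bar c_1(x_i),\check U_{i,2})$, because $\mathcal F$ is increasing in $c$. The switching terms vanish since $\check U_{i,1}=\check U_{i,2}$. This gives \cref{Ineq: sub} for \cref{Eq: D-HJB HR}.

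The main point requiring care is to check that $\boldsymbol{H}_\eps$, defined in \cref{Eq: HR F} through the specific maximizers of \cref{Eq: CF CB proj}, really equals $H^\uparrow_\eps+H^\downarrow_\eps-H_{\min}$. Concretely, the truncated formula for $c^{B,*}$ must be the maximizer of the concave problem on $[rx_i+y_j,1/\eps]$, including when $\D^-\check U_{i,j}\le0$; this does not occur for $\check U$, which is increasing. Once this is verified, the lower bound by the null-savings choice is legitimate and the proof is complete.
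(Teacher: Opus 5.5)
Your proposal is correct and follows essentially the same route as the paper. The supersolution property of $\check V$ is inherited via \cref{Prop: sub super HR1}, and the subsolution property of $\check U$ comes from rerunning Part 1 of the proof of \cref{Prop:sub-super disc} with the null-savings control $c^F_{i,j}=c^B_{i,j}=\bar c_j(x_i)$, which remains admissible for $\boldsymbol{H}_\eps$. Your explicit checks, that \cref{Eq: eps lower} places $\bar c_j(x_i)$ in $[rx_i+y_j,1/\eps]$ and that the truncated formulas in \cref{Eq: CF CB proj} are the maximizers (the content of \cref{Lemma: c max}), only spell out what the paper leaves implicit.
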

\begin{proof}
The fact that $\check{V}$ is supersolution to \cref{Eq: D-HJB HR} follows directly from \cref{Prop: sub super HR1}. To show that $\check{U}$ is subsolution to \cref{Eq: D-HJB HR}, we only need to observe that $c^F_{i,1}=c^B_{i,1}=\bar{c}_1(x_i)$ is still an admissible control, then we follow the same proof as that of \cref{Prop:sub-super disc}.
\end{proof}

\section{The case $\theta \geq 1$: existence, iterative algorithm and error estimate}\label{Sec: late}
Throughout this section we make the standing assumption $\theta \geq 1$.
\subsection{Existence, uniqueness and gradient estimates}
The comparison principle that follows is the discrete counterpart to \cref{comparison}. 
\begin{proposition}\label{Prop: comparison}
If ${U}$ and ${V}$ are respectively a sub and supersolution of \cref{Eq: D-HJB H}, then ${U}\preceq  {V}$.
\end{proposition}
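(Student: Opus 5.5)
We argue by contradiction at a maximum point of $U-V$, as in the standard proof of discrete comparison for monotone schemes. Suppose $M:=\max_{i,j}(U_{i,j}-V_{i,j})>0$. Since the grid is finite, the maximum is attained at some $(i_0,j_0)$.

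The first step is to reduce to a pair where the switching term is favourable. Because $M$ is the global maximum over both $i$ and $j$, we have
\[
U_{i_0,\bar\jmath_0}-V_{i_0,\bar\jmath_0}\le U_{i_0,j_0}-V_{i_0,j_0},
\]
hence
\[
\lambda_{j_0}\lc U_{i_0,\bar\jmath_0}-U_{i_0,j_0}\rc\le \lambda_{j_0}\lc V_{i_0,\bar\jmath_0}-V_{i_0,j_0}\rc .
\]
Likewise, for the discrete gradients at $i_0$ (whenever they are defined) the maximality gives
\[
\D^+U_{i_0,j_0}\le \D^+V_{i_0,j_0},\qquad \D^-U_{i_0,j_0}\ge \D^-V_{i_0,j_0}.
\]
By \cref{Lemma: H monotone}, $\boldsymbol{H}$ is increasing in $\boldsymbol{p}^F$ and decreasing in $\boldsymbol{p}^B$. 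At the boundary nodes only one of the two arguments enters, so the same inequality holds there. We therefore obtain
\[
\boldsymbol{H}\lc x_{i_0},y_{j_0},U_{i_0,j_0},\D^+U_{i_0,j_0},\D^-U_{i_0,j_0}\rc\le \boldsymbol{H}\lc x_{i_0},y_{j_0},U_{i_0,j_0},\D^+V_{i_0,j_0},\D^-V_{i_0,j_0}\rc .
\]

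Next we subtract the supersolution inequality for $V$ from the subsolution inequality for $U$ at $(i_0,j_0)$. Using the two inequalities above, this gives
\[
\frac{\rho}{\theta}M\le \boldsymbol{H}\lc x_{i_0},y_{j_0},U_{i_0,j_0},\boldsymbol{p}^F,\boldsymbol{p}^B\rc-\boldsymbol{H}\lc x_{i_0},y_{j_0},V_{i_0,j_0},\boldsymbol{p}^F,\boldsymbol{p}^B\rc,
\]
where $\boldsymbol{p}^{F}=\D^+V_{i_0,j_0}$ and $\boldsymbol{p}^{B}=\D^-V_{i_0,j_0}$. We have $V_{i_0,j_0}<U_{i_0,j_0}$, and both values must be negative for $\boldsymbol{H}$ to be defined. \cref{Lemma: num H_V} (for $\theta>1$; for $\theta=1$, $\mathcal{F}$ does not depend on $v$) then says the right-hand side is $\le 0$. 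This contradicts $\frac{\rho}{\theta}M>0$, so $U\preceq V$.

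The main obstacle is the possibility that the right-hand side involves $+\infty$. This happens when $\D^-V_{i_0,j_0}<0$, in which case $H^\downarrow=+\infty$. We handle it as follows. If $\boldsymbol{H}(\cdot,V,\ldots)=+\infty$, the supersolution inequality for $V$ fails, so for a genuine supersolution this cannot occur; hence $\D^-V_{i_0,j_0}\ge 0$, and it is only the $U$-side that could be infinite. After the monotonicity step, however, the $U$-side is evaluated at $\D^\pm V_{i_0,j_0}$ and is therefore finite.

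A second technical point is the interpretation of \cref{Lemma: num H_V} in the case $\theta=1$, or when the two Hamiltonians are merely equal. Strictness is not needed, because $\rho M/\theta>0$ already supplies the strict inequality that yields the contradiction. We also need that sub- and supersolutions take negative values, so that $((1-\g)V)^{\cdot}$ is defined. We take this as implicit in the definition of the scheme; we would state it as a standing convention if necessary.
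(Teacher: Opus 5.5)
Your proof is correct and follows the paper's argument: at a positive maximum of $U-V$ you combine the switching-term inequality with the monotonicity of $\boldsymbol{H}$ in $(\boldsymbol{p}^F,\boldsymbol{p}^B)$ from \cref{Lemma: H monotone}, then apply \cref{Lemma: num H_V} to reach $\rho M/\theta\le 0$. Your extra remarks usefully fill in points the paper's proof passes over silently: you rule out $\D^-V_{i_0,j_0}<0$ through the supersolution inequality, and you note that for $\theta=1$ the function $\mathcal{F}$ does not depend on $v$, so only the non-strict inequality is needed.
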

\begin{proof} Suppose 
$
{U}_{i^*,j^*}-{V}_{i^*,j^*}=\max_{i,j}\{{U}_{i,j}-{V}_{i,j}\}=\delta>0.
$
Since ${U}$ is a subsolution of \cref{Eq: D-HJB H},
\beq
\frac{\rho}{\theta} {U}_{i^*,j^*}\leq \boldsymbol{H}\lc x_i,y_j,{U}_{i^*,j^*},\D^+{{V}_{i^*,j^*}},\D^-{{V}_{i^*,j^*}}\rc+\lambda_{j^*}({U}_{i^*,\bar \jmath^*}-{U}_{i^*,j^*}).
\eeq
Observe similarly as in \cref{Prop: comparison} that $\D^+{{V}_{i^*,j^*}}\geq \D^+{{U}_{i^*,j^*}}$ if $0\leq i^*\leq I-1$, and $\D^-{{V}_{i^*,j^*}}\leq \D^-{{U}_{i^*,j^*}}$ if $1\leq i^*\leq I$. \cref{Lemma: H monotone} and \cref{Eq: H num} then yield
$$
\begin{aligned}
&\boldsymbol{H}\lc x_{i^*},y_{j^*},{U}_{i^*,j^*},\D^+{{U}_{i^*,j^*}},\D^-{{U}_{i^*,j^*}}\rc\\
\leq {}&\boldsymbol{H}\lc x_{i^*},y_{j^*},{U}_{i^*,j^*},\D^+{{V}_{i^*,j^*}},\D^-{{V}_{i^*,j^*}}\rc.
\end{aligned}
$$
Since $\theta \geq 1$ and ${U}_{i^*,j^*}>{V}_{i^*,j^*}$, \cref{Lemma: num H_V} leads to 
\beq\label{Eq: HU HV}
\begin{aligned}
&\boldsymbol{H}\lc x_{i^*},y_{j^*},{U}_{i^*,j^*},\D^+{{V}_{i^*,j^*}},\D^-{{V}_{i^*,j^*}}\rc\\
\leq {}&\boldsymbol{H}\lc x_{i^*},y_{j^*},{V}_{i^*,j^*},\D^+{{V}_{i^*,j^*}},\D^-{{V}_{i^*,j^*}}\rc.
\end{aligned}
\eeq
This implies, together with ${U}_{i^*,j^*}-{V}_{i^*,j^*}\geq {U}_{i^*,\bar \jmath^*}-{V}_{i^*,\bar \jmath^*}$, that
\beq
\frac{\rho}{\theta} {U}_{i^*,j^*}\leq \boldsymbol{H}\lc x_{i^*},y_{j^*},{V}_{i^*,j^*},\D^+{{V}_{i^*,j^*}},\D^-{{V}_{i^*,j^*}}\rc+\lambda_{j^*}({V}_{i^*,\bar \jmath^*}-{V}_{i^*,j^*}).
\eeq 
With the supersolution inequality for ${V}$, we get $\rho \delta/\theta\leq 0$, a contradiction. 
\end{proof}
The following barrier property is then deduced from \cref{Prop: comparison} and \cref{Prop:sub-super disc}. 
\begin{proposition}\label{Prop: sol bound late}
Let $(V_1,V_2)$ be a solution of \cref{Eq: D-HJB H}. Then 
\beq\label{Eq: sol bound}
\check{U}\preceq  V\preceq  \check{V}.
\eeq
\end{proposition}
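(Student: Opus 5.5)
The bound \cref{Eq: sol bound} follows directly from the discrete comparison principle \cref{Prop: comparison} combined with the discrete barriers of \cref{Prop:sub-super disc}. Let $V=(V_1,V_2)$ solve \cref{Eq: D-HJB H}. Then $V$ is at once a discrete subsolution, satisfying \cref{Ineq: sub} with equality, and a discrete supersolution, satisfying \cref{Ineq: super} with equality.

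For the lower bound, I apply \cref{Prop: comparison} with $U=\check{U}$, which is a subsolution by \cref{Prop:sub-super disc}, and with $V$ in the role of the supersolution. This gives $\check{U}\preceq V$. For the upper bound, I apply \cref{Prop: comparison} again with $V$ as the subsolution and $\check{V}$ as the supersolution, which \cref{Prop:sub-super disc} provides. This gives $V\preceq\check{V}$.

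The one point to check is that the hypotheses used inside the proof of \cref{Prop: comparison} hold. That proof invokes \cref{Lemma: num H_V}, which requires both compared values to be negative, and it needs the discrete Hamiltonian to be finite at the points involved.

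For the barriers this is immediate. Since $\gamma>1$ and $rx_i+y_1>0$, $b(x_i+y_2/r)>0$, both $\check{U}$ and $\check{V}$ are negative.

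For $V$ itself, negativity is implicit in the problem setting. The term $((1-\gamma)V_{i,j})^{\frac{1-\gamma\psi}{1-\gamma}}$ appearing in $H$ is only defined for $V_{i,j}<0$, and $\mathcal{F}$ is defined on $v<0$ (it is jointly concave on $(0,+\infty)\times(-\infty,0)$). So any solution of \cref{Eq: D-HJB H} takes negative values.

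Finiteness of $\boldsymbol{H}$ also holds for the solution. If $\D^-V_{i,j}<0$, then $H^{\downarrow}=+\infty$, and the equation \cref{Eq: D-HJB H} could not hold with its finite left-hand side. Hence a solution has $\D^-V_{i,j}\ge 0$ and finite Hamiltonians.

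With these observations the maximum-point argument in \cref{Prop: comparison} applies verbatim in both cases, and the two inequalities together yield \cref{Eq: sol bound}.
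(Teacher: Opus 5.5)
Your proposal is correct and follows the paper's argument exactly: the paper deduces the bound by applying the discrete comparison principle \cref{Prop: comparison} twice, once to the pair $(\check{U},V)$ and once to $(V,\check{V})$, using the discrete barriers from \cref{Prop:sub-super disc}. Your extra checks are sound and make explicit what the paper leaves implicit: negativity of the compared values, needed in \cref{Lemma: num H_V}, and finiteness of $\boldsymbol{H}$, which forces $\Delta^-V_{i,j}\ge 0$. At $\theta=1$ the sign check is not even needed, since $\mathcal{F}$ then no longer depends on $v$.
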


We now introduce a discrete Perron's method, which gives the existence of a solution $V$ to \cref{Eq: D-HJB H}.
\begin{proposition}\label{Prop: Perron}
Suppose that for all $i,j$,
\beq\label{Eq: Perron}
U_{i,j}:=\sup \left\{Z_{i,j}:\, (Z_{1},Z_{2})\,{\text{is a subsolution of \cref{Eq: D-HJB H} nondecreasing with respect to}}\,\, i\right\}
\eeq
Then $U=(U_1,U_2)$ is a solution of \cref{Eq: D-HJB H}. 
\end{proposition}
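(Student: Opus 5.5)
The plan is the classical Perron argument, carried out on the finite grid $\mathcal{G}^{\D x}$. Denote by $\mathcal{S}$ the set in \cref{Eq: Perron}.

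\textbf{Step 1: $U$ is well defined and bounded.} The set $\mathcal{S}$ is nonempty, since $\check U$ from \cref{Prop:sub-super disc} is a subsolution and $x\mapsto (rx+y_1)^{1-\g}/(1-\g)$ is increasing. By \cref{Prop: comparison} with the supersolution $\check V$, every $Z\in\mathcal{S}$ satisfies $Z\preceq \check V$. Hence
\[
\check U\preceq U\preceq \check V,
\]
so $U$ is finite and takes values in $(-\infty,0)$, where the monotonicity results of \cref{Lemma: num H_V} apply. As a supremum of nondecreasing grid functions, $U$ is nondecreasing in $i$. In particular $\D^-U_{i,j}\ge 0$, so the branch $+\infty$ in \cref{Eq: H down} never occurs.

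\textbf{Step 2: $U$ is a subsolution and $U\in\mathcal{S}$.} First I show that $\mathcal{S}$ is stable under pointwise maxima. Let $Z^1,Z^2\in\mathcal{S}$ and $W=\max(Z^1,Z^2)$. At a node where $W_{i,j}=Z^1_{i,j}$, we have $\D^+W_{i,j}\ge \D^+Z^1_{i,j}$, $\D^-W_{i,j}\le \D^-Z^1_{i,j}$ and $W_{i,\bar\jmath}\ge Z^1_{i,\bar\jmath}$. Since $\boldsymbol{H}$ is increasing in $\boldsymbol{p}^F$ and decreasing in $\boldsymbol{p}^B$ (\cref{Lemma: H monotone}), the subsolution inequality for $Z^1$ transfers to $W$. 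Clearly $W$ is also nondecreasing in $i$.

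Since the grid is finite, I can pick for every node a sequence in $\mathcal{S}$ approaching $U_{i,j}$ at that node. Taking running maxima produces a nondecreasing sequence $Z^n\in\mathcal{S}$ with $Z^n\to U$ at every node. I then pass to the limit in \cref{Ineq: sub}. This uses the continuity of $\boldsymbol{H}$ in $(V,\boldsymbol{p}^F,\boldsymbol{p}^B)$ on $\{V<0,\ \boldsymbol{p}^B\ge 0\}$, which I will check from \cref{def H}. Continuity at $\boldsymbol{p}^B=0$ holds because $p^{1-\p}\to 0$ as $p\to 0^+$. Hence $U$ is a subsolution and $U\in\mathcal{S}$.

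\textbf{Step 3: $U$ is a supersolution.} I argue by contradiction. Suppose the inequality in \cref{Ineq: sub} is strict at some node $(i_0,j_0)$, and define $\widetilde U=U+\delta\,\mathbf 1_{(i_0,j_0)}$ with small $\delta>0$.
\begin{itemize}
\item At $(i_0,j_0)$ itself, the strict inequality survives for small $\delta$ by continuity of $\boldsymbol{H}$ in $V$ and in $\boldsymbol{p}$. This holds even though, for $\theta\ge1$, $\boldsymbol{H}$ is decreasing in $V$.
\item At $(i_0-1,j_0)$, $\D^+$ increases, which only helps.
\item At $(i_0+1,j_0)$, $\D^-$ decreases, which also helps.
\item At $(i_0,\bar\jmath_0)$, the coupling term $\lambda(\widetilde U_{i_0,j_0}-\widetilde U_{i_0,\bar\jmath_0})$ increases, which helps as well.
\end{itemize}
So $\widetilde U$ is a subsolution with $\widetilde U_{i_0,j_0}>U_{i_0,j_0}$. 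This contradicts the definition of $U$, provided $\widetilde U$ is still nondecreasing in $i$.

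\textbf{Main obstacle: plateaus.} The delicate case is $U_{i_0,j_0}=U_{i_0+1,j_0}$ (with $i_0<I$), where the one-point bump destroys monotonicity. In that case $\D^+U_{i_0,j_0}=0<p_{\min}$, so $H^\uparrow=H_{\min}$ at $i_0$. I would then bump the whole maximal plateau $\{i_0,\dots,i_1\}$ of equal values by $\delta$, where $i_1$ is the first index with $U_{i_1+1,j_0}>U_{i_1,j_0}$, or $i_1=I$. After the bump:
\begin{itemize}
\item interior differences on the plateau are unchanged;
\item $\D^-$ at $i_0$ decreases, which helps;
\item $\D^+$ at $i_1$ decreases only slightly, and monotonicity is preserved for small $\delta$.
\end{itemize}
The remaining difficulty is node $i_1$, which may not be strict. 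There I would try to show that strictness propagates along the plateau. At plateau nodes, $\D^+U$ and $\D^-U$ vanish, so the scheme reduces to an algebraic relation of the form
\[
\frac{\rho}{\theta}U\;\le\;H_{\min}+\lambda\,(\cdot),
\]
and this relation is identical at $i_0$ and at interior plateau nodes. I would also try to choose the non-strict node itself as the starting point, and otherwise fall back on a perturbation $\delta\,\mathbf 1_{\{i\ge i_0\}}$ on the tail.
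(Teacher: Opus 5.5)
Steps 1--2 and the non-plateau part of Step 3 are sound and follow the same Perron scheme as the paper; your running-maximum limit is a clean way to show that $U$ is a subsolution. The gap is the step you flag yourself: propagating strictness along a plateau. The justification you give is false.

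At a node $i$ with $i_0<i<i_1$ one has $\D^+U_{i,j_0}=\D^-U_{i,j_0}=0$, hence
\[
\boldsymbol{H}=H^{\uparrow}(0)+H^{\downarrow}(0)-H_{\min}=H(x_i,y_{j_0},U_{i,j_0},0)=0,
\]
not $H_{\min}$. At $i_0$ itself, $\D^-U_{i_0,j_0}$ is in general positive, so $\boldsymbol{H}=H^{\downarrow}(x_{i_0},y_{j_0},U_{i_0,j_0},\D^-U_{i_0,j_0})$, or $H_{\min}$ if $i_0=0$. Moreover, the coupling term $\lambda_{j_0}(U_{i,\bar\jmath_0}-U_{i,j_0})$ varies with $i$ because $U_{\bar\jmath_0}$ does. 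So the relations at $i_0$ and at the other plateau nodes are not identical.

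Your fallback $\delta\mathbf 1_{\{i\ge i_0\}}$ does not rescue this. It raises $U_{i,j_0}$ at every $i>i_1$ with unchanged differences. There the left side gains $\frac{\rho}{\theta}\delta$, the coupling term loses $\lambda_{j_0}\delta$, and $\boldsymbol{H}$ does not increase since $\theta\ge1$. You would therefore need a strict inequality at every $i\ge i_0$, which you do not have.

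The missing argument is a one-sided comparison of residuals along the plateau. Let $R_i$ be the left side minus the right side of \cref{Ineq: sub} for $U$ at $(i,j_0)$, and take $i\in(i_0,i_1]$:
\begin{itemize}
\item $U_{i,j_0}=U_{i_0,j_0}$.
\item $U_{i,\bar\jmath_0}\ge U_{i_0,\bar\jmath_0}$, because $U_{\bar\jmath_0}$ is nondecreasing in $i$. This is where the restriction to nondecreasing subsolutions in \cref{Eq: Perron} is really used, not only to keep bumps admissible.
\item $\boldsymbol{H}$ at $(i,j_0)$ is $\ge 0$. It equals $0$ inside the plateau, $H^{\uparrow}(\D^+U_{i_1,j_0})-H_{\min}\ge 0$ at $i_1<I$, and $H^{\downarrow}(0)=0$ at $i_1=I$.
\item At $i_0$, $\boldsymbol{H}\le 0$. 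It equals $H_{\min}<0$ if $i_0=0$; otherwise $H^{\uparrow}(0)=H_{\min}$ together with \cref{Eq: Hdown leq 0} gives $\boldsymbol{H}=H^{\downarrow}(\D^-U_{i_0,j_0})\le 0$.
\end{itemize}
Hence $R_i\le R_{i_0}<0$ on the whole plateau.

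The paper then bumps only the right end $i_1$, where $\D^+U_{i_1,j_0}>0$ or $i_1=I$, so monotonicity survives. Your whole-plateau bump also works once this is known. Note that it raises the value at every plateau node, so you need strictness at all of them, not just at $i_1$. Also, under your bump $\D^-$ at $i_0$ increases rather than decreases; that hurts, and it is harmless only because $i_0$ is strict.
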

\begin{proof}
We first observe that $U_j$ is nondecreasing with respect to $i$. \par
{Let us first prove $U$ is a subsolution.} Given the grid node $x_i$ and a positive number $\eps$, \cref{Eq: Perron} implies that there exists a subsolution $(Z_{1},Z_{2})$ such that $Z_{i,j}>{U}_{i,j}-\eps \D x$. Moreover, from the maximality of $U$ we infer 
$
U_{i+1,j}\geq Z_{i+1,j}$, $\quad U_{i-1,j}\geq Z_{i-1,j}$, $\quad U_{i,\bar \jmath}\geq Z_{i,\bar \jmath}$.
 This implies $\D^+{U_{i,j}}\geq \D^+{Z_{i,j}}-\eps$ and $\D^-{U_{i,j}}\leq \D^-{Z_{i,j}}+\eps$. From \cref{Lemma: H monotone}, we deduce 
 $$
 \boldsymbol{H}\lc x_i,y_j,{U}_{i,j},\D^+{{U}_{i,j}},\D^-{{U}_{i,j}}\rc \geq \boldsymbol{H}\lc x_i,y_j,Z_{i,j}+\eps \D x,\D^+{Z_{i,j}}-\eps,\D^-{Z_{i,j}}+\eps\rc.
 $$
From the local Lipschitz continuity of $\boldsymbol{H}$ and letting $\eps\to 0$, we deduce that $U_j$ satisfies the subsolution inequality at $x_i$. \par
Next suppose $U$ is not a solution to \cref{Eq: D-HJB H}, i.e. there exists $i^*,{j^*}$ and $\delta>0$ such that
\beq\label{Eq: sub U}
\left\{
      \begin{aligned}
 &\frac{\rho}{\theta} U_{i^*,j^*}
-\boldsymbol{H}\lc x_{i^*},y_{j^*},U_{i^*,j^*},\D^+{U_{i^*,j^*}},\D^-{U_{i^*,j^*}}\rc
-\lambda_{j^*}\lc U_{i^*,\bar \jmath^*}-U_{i^*,j^*} \rc\leq -\delta,\\
&\frac{\rho}{\theta} U_{i^*,\bar \jmath^*}
-\boldsymbol{H}\lc x_{i^*},y_{\bar \jmath^*},U_{i^*,\bar \jmath^*},\D^+{U_{i^*,\bar \jmath^*}},\D^-{U_{i^*,\bar \jmath^*}}\rc
-\lambda_{\bar \jmath^*}\lc U_{i^*,j^*}-U_{i^*,\bar \jmath^*} \rc\leq 0.
     \end{aligned}
   \right.
\eeq
We then make out two cases.\par
\fbox{\it{Case 1.}} $\D^+{U_{i^*,j^*}}>0$. It is possible to choose a constant $\eta>0$ and a grid function $W_{i,j^*}$ such that
\beq\label{Eq: def W}
W_{i,j}=U_{i,j} \,\,\forall i\neq i^*,\,\,W_{i^*,j^*}=U_{i^*,j^*}+\eta \D x\,\, {\text{and}}\,\,W_{i^*,j^*}\leq U_{i^*+1,j^*}.
\eeq
Plugging this information into the second line of \cref{Eq: sub U} gives
\beq\label{Eq: sub U Perron}
\begin{aligned}
&\frac{\rho}{\theta} U_{i,\bar \jmath^*}
-\boldsymbol{H}\lc x_{i^*},y_{\bar \jmath^*},U_{i^*,\bar \jmath^*},\D^+{U_{i^*,\bar \jmath^*}},\D^-{U_{i^*,\bar \jmath^*}}\rc
-\lambda_{\bar \jmath^*}\lc W_{i^*,j^*}-U_{i^*,\bar \jmath^*} \rc\\
\leq{}& -\lambda_{\bar \jmath^*}\eta \D x<0. 
\end{aligned}
\eeq
It is always possible to choose $\eta$ sufficiently small such that $\lc\frac{\rho}{\theta}+\lambda_{j^*}\rc \eta \D x\leq {\delta}/{4}$, hence
\beq\label{Eq: W-U}
\frac{\rho}{\theta} W_{i^*,j^*}
-\lambda_{j^*}\lc U_{i^*,\bar \jmath^*}-W_{i^*,j^*} \rc\leq \frac{\rho}{\theta} U_{i^*,j^*}
-\lambda_{j^*}\lc U_{i^*,\bar \jmath^*}-U_{i^*,j^*}\rc+\frac{\delta}{4}.
\eeq
Since $\D^+{W_{i^*,j^*}}=\D^+{U_{i^*,j^*}}-\eta$,  $\D^-{W_{i^*,j^*}}=\D^-{U_{i^*,j^*}}+\eta$, we obtain for sufficiently small $\eta$ that 
\beq\label{Eq: -H Perron}
\begin{aligned}
&-\boldsymbol{H}\lc x_{i^*},y_{j^*},W_{i^*,j^*},\D^+{W_{i^*,j^*}},\D^-{W_{i^*,j^*}}\rc\\
\leq {}&-\boldsymbol{H}\lc x_{i^*},y_{j^*},U_{i^*,j^*},\D^+{U_{i^*,j^*}},\D^-{U_{i^*,j^*}}\rc+\frac{\delta}{4}.
\end{aligned}
\eeq
Summing up \cref{Eq: W-U} and \cref{Eq: -H Perron}, we obtain from the first equation in \cref{Eq: sub U} that 
\beq\label{Eq: sub W Perron}
\frac{\rho}{\theta} W_{i^*,j^*}-\boldsymbol{H}\lc x_{i^*},y_{j^*},W_{i^*,j^*},\D^+{W_{i^*,j^*}},\D^-{W_{i^*,j^*}}\rc-\lambda_{j^*}\lc U_{i^*,\bar \jmath^*}-W_{i^*,j^*} \rc\leq -\frac{\delta}{2}.
\eeq
From \cref{Eq: sub U Perron} and \cref{Eq: sub W Perron}, $(W_j,U_{\bar \jmath^*})$ is a subsolution while $W_{i^*,j^*}>U_{i^*,j^*}$. This contradicts the definition of $U$, see \cref{Eq: Perron}. \par
\fbox{\it{Case 2.}} $\D^+{U_{i^*,j^*}}=0$. We need to make out several sub-cases. \\
\fbox{\it{Case 2.1.}} Suppose $\D^+{U_{i^*,j^*}}=0$, $i^*=I-1$. Then, 
$
{H}^{\downarrow} \lc x_{I},y_{j^*},U_{I,j^*},\D^-{U_{I,j^*}}\rc=0.
$
We derive from \cref{Eq: sub U} that
$$
\frac{\rho}{\theta} U_{I,j^*}-\lambda_{j^*}\lc U_{I,\bar \jmath^*}-U_{I,j^*} \rc\leq -\delta
$$
Defining $W_{j^*}$ as in \cref{Eq: def W}, we obtain $\D^-{W_{I,j^*}}=\eta$. We can choose $\eta$ sufficiently small such that \cref{Eq: W-U} holds at $i^*=I$ and 
$
-{H}^{\downarrow} \lc x_{I},y_{j^*},W_{I,j^*},\eta\rc\leq \frac{\delta}{4}.
$
This yields
$$
\frac{\rho}{\theta} W_{I,j^*}-\lambda_{j^*}\lc U_{i^*,\bar \jmath^*}-W_{i^*,j^*} \rc-{H}^{\downarrow} \lc x_{I},y_{j^*},W_{I,j^*},\eta\rc\leq -\frac{\delta}{2},
$$
i.e. the same contradiction as in {\it{Case 1.}}
 \\
\fbox{\it{Case 2.2.}} Suppose $\D^+{U_{i^*,j^*}}=0$, $i^*<I-1$ and there exists $l\leq I-1-i^*$, such that $\D^+{U_{i^*+l,j^*}}>0$ and 
\beq\label{Eq: Ui=Ui+l}
U_{i^*,j^*}=\cdots=U_{i^*+l,j^*}.
\eeq
 Since $U_{\bar \jmath^*}$ is nondecreasing w.r.t. $i$, $U_{i^*+l,\bar \jmath^*}\geq U_{i^*,\bar \jmath^*}$. With \cref{Eq: Ui=Ui+l}, we deduce
\beq\label{Eq: lambda i+l}
 -\lambda_{j^*}\lc U_{i^*+l,\bar \jmath^*}-U_{i^*+l,j^*} \rc\leq -\lambda_{j^*}\lc U_{i^*,\bar \jmath^*}-U_{i^*,j^*} \rc.
\eeq
Since $\D^+{U_{i^*+l,j^*}}>\D^+{U_{i^*,j^*}}$ and $\D^-{U_{i^*+l,j^*}}=0 \leq \D^-{U_{i^*,j^*}}$. \cref{Lemma: H monotone} then yields
\beq\label{Eq: H i+l}
\begin{aligned}
&\boldsymbol{H}\lc x_{i^*+l},y_{j^*},U_{i^*+l,j^*},\D^+{U_{i^*+l,j^*}},\D^-{U_{i^*+l,j^*}}\rc\\
\geq{}& \boldsymbol{H}\lc x_{i^*},y_{j^*},U_{i^*+l,j^*},\D^+{U_{i^*+l,j^*}},\D^-{U_{i^*+l,j^*}}\rc\\
\geq {}&\boldsymbol{H}\lc x_{i^*},y_{j^*},U_{i^*,j^*},\D^+{U_{i^*,j^*}},\D^-{U_{i^*,j^*}}\rc.
\end{aligned}
\eeq
Combining \cref{Eq: Ui=Ui+l}, \cref{Eq: lambda i+l} and \cref{Eq: H i+l}, we obtain 
$$
\begin{aligned}
&\frac{\rho}{\theta} U_{i^*+l,j^*}-\boldsymbol{H}\lc x_{i^*+l},y_{j^*},U_{i^*+l,j^*},\D^+{U_{i^*+l,j^*}},\D^-{U_{i^*+l,j^*}}\rc\\
&-\lambda_{j^*}\lc U_{i^*+l,\bar \jmath^*}-U_{i^*+l,j^*} \rc
\leq -\delta.
\end{aligned}
$$
We can then consider the perturbation
$$
W_{i,j}=U_{i,j} \,\,\forall i\neq i^*+l,\,\,W_{i^*+l,j^*}=U_{i^*+l,j^*}+\eta \D x\,\, {\text{and}}\,\,W_{i^*+l,j^*}\leq U_{i^*+l+1,j^*}.
$$
Similarly to {\it{Case 1}}, we obtain $(W_{j^*},U_{\bar \jmath^*})$ is a subsolution while $W_{i^*+l,j^*}>U_{i^*+l,j^*}$. \\
\fbox{\it{Case 2.3.}} Suppose $\D^+{U_{i^*,j^*}}=0$, $i^*<I-1$ and the $l$ defined in {\it{Case 2.2.}} does not exist, i.e. $U_{i^*,j^*}=\cdots=U_{I,j^*}$. Then $\D^+{U_{I-1,j^*}}=0$, $(U_{j^*},U_{\bar \jmath^*})$ satisfies \cref{Eq: sub U} at $I$ and the desired result follows from {\it{Case 2.1}}. 
\end{proof}

Next, we prove that the grid functions ${V_{i,j}}$ are strictly increasing w.r.t. $i$.  
\begin{proposition}\label{Prop: positive Dv}
Let $V$ be a solution to \cref{Eq: D-HJB H}. The finite difference $\D^-{V_{i,j}}$ is strictly positive for all $i\geq1$ and $j\in \{1,2\}$.
\end{proposition}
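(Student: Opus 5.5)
The plan is to use the structure of the discrete Hamiltonian \cref{Eq: H num}: first obtain $\D^-V_{i,j}\ge 0$ for free from the value $+\infty$ of $H^{\downarrow}$, and then rule out equality by a downward induction that ends in a contradiction at the left boundary $x_0=\ux$.

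\emph{Step 1: nonnegativity.} For $1\le i\le I$, the equation \cref{Eq: D-HJB H} at $(i,j)$ involves $H^{\downarrow}(x_i,y_j,V_{i,j},\D^-V_{i,j})$. By \cref{Eq: H down}, this term equals $+\infty$ if $\D^-V_{i,j}<0$. Since $V$ is a (finite) solution, this is impossible, so $\D^-V_{i,j}\ge 0$ for all $i\ge1$ and both $j$. In particular both $V_1$ and $V_2$ are nondecreasing in $i$.

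\emph{Step 2: elementary sign facts.} I will record three facts, using $0<\p<1$ (so $1-\p^{-1}<0$) and $V<0$ (from \cref{Prop: sol bound late}).
\begin{itemize}
\item $\mathcal{F}(c,V_{i,j})<0$ for every $c>0$, and $\mathcal{F}(c,V_{i,j})\to0$ as $c\to\infty$.
\item Consequently $H^{\downarrow}(x_i,y_j,V_{i,j},0)=\sup_{c\ge rx_i+y_j}\mathcal{F}(c,V_{i,j})=0$, and $H_{\min}(x_i,y_j,V_{i,j})=\mathcal{F}(\bar c_j(x_i),V_{i,j})<0$.
\item For $p>0$ we have the strict inequality $H^{\downarrow}(x_i,y_j,V_{i,j},p)<0$. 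Indeed, both $s^B p\le 0$ and $\mathcal{F}<0$, and the supremum in \cref{Eq: H F in p} is attained at a finite $c$, or equals $H_{\min}<0$.
\end{itemize}
Also $H^{\uparrow}\ge H_{\min}$ always.

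\emph{Step 3: contradiction by descent.} Suppose $\D^-V_{i,j}=0$ for some $i\ge1$, i.e.\ $V_{i-1,j}=V_{i,j}=:V$. I compare the equations at $i$ and at $i-1$.
\begin{itemize}
\item At node $i$, $H^{\downarrow}=0$. For $i<I$ the discrete Hamiltonian is $H^{\uparrow}-H_{\min}\ge 0$; for $i=I$ it is $H^{\downarrow}=0$. Hence
\[
\tfrac{\rho}{\theta}V\ \ge\ \lambda_j\lc V_{i,\bar\jmath}-V\rc\ \ge\ \lambda_j\lc V_{i-1,\bar\jmath}-V\rc ,
\]
where the last inequality uses the monotonicity of $V_{\bar\jmath}$ from Step 1.
\item At node $i-1$, $\D^+V_{i-1,j}=0<p_{\min}$, so $H^{\uparrow}=H_{\min}$. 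If $i-1\ge1$, the Hamiltonian reduces to $H^{\downarrow}(\D^-V_{i-1,j})$, giving
\[
\tfrac{\rho}{\theta}V=H^{\downarrow}(x_{i-1},y_j,V,\D^-V_{i-1,j})+\lambda_j\lc V_{i-1,\bar\jmath}-V\rc .
\]
\end{itemize}
Comparing the two relations forces $H^{\downarrow}(x_{i-1},y_j,V,\D^-V_{i-1,j})\ge 0$. By Step 2, this forces $\D^-V_{i-1,j}=0$. Thus the degeneracy propagates downward, and iterating we reach $i=1$ with $V_{0,j}=V_{1,j}$.

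At $i-1=0$ the boundary equation is $\frac{\rho}{\theta}V=H^{\uparrow}(\D^+V_{0,j})+\lambda_j(V_{0,\bar\jmath}-V)=H_{\min}(x_0,y_j,V)+\lambda_j(V_{0,\bar\jmath}-V)$. Since $H_{\min}<0$, this gives $\frac{\rho}{\theta}V<\lambda_j(V_{0,\bar\jmath}-V)\le\frac{\rho}{\theta}V$, a contradiction. Hence $\D^-V_{i,j}>0$ for all $i\ge1$.

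The main obstacle I expect is Step 2, namely justifying the strict sign $H^{\downarrow}(p)<0$ for $p>0$ and the exact value $H^{\downarrow}(0)=0$ (a supremum that is not attained). These require care with the sign conventions of $\mathcal{F}$ under $\g>1$, $\p<1$, and with the case split $p\le p_{\min}$ versus $p>p_{\min}$ in \cref{Eq: H down}. The bookkeeping at $i=I$, where only $H^{\downarrow}$ appears, must also be checked separately, but it fits the same chain of inequalities.
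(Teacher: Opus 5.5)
Your proof is correct, but it closes the argument differently from the paper. Both proofs start from $\Delta^-V_{i,j}\ge 0$ and compare the $j$-equations at nodes $i$ and $i-1$ of a flat pair $V_{i-1,j}=V_{i,j}$. At node $i$ one has $\boldsymbol{H}\ge 0$, because $H^{\downarrow}(\cdot,0)=0$ and $H^{\uparrow}\ge H_{\min}$. At node $i-1$ the forward part collapses to $H_{\min}$.

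\textbf{The paper's route.} It concludes in one step via the coupling. Multiplying the two equations at node $i^*$ by $\lambda_{\bar\jmath^*}$ and $\lambda_{j^*}$ and adding cancels the switching terms. Since $V<0$ (from the barrier $\check{V}$), this forces $\boldsymbol{H}<0$ for the component $\bar\jmath^*$, hence $\Delta^-V_{i^*,\bar\jmath^*}>0$. Subtracting the $j^*$-equations at $i^*$ and $i^*-1$ then gives $-\lambda_{j^*}(V_{i^*,\bar\jmath^*}-V_{i^*-1,\bar\jmath^*})=\boldsymbol{H}(x_{i^*})-\boldsymbol{H}(x_{i^*-1})\ge 0$, a contradiction. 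Only the weak sign $H^{\downarrow}(p)\le 0$ is needed there.

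\textbf{Your route.} You never use the weighted sum; you need only the weak monotonicity of $V_{\bar\jmath}$. In exchange, you need the strict sign $H^{\downarrow}(p)<0$ for $p>0$ to propagate the flatness of $V_j$ down to $\underline{x}$. There the state-constraint Hamiltonian reduces to $H^{\uparrow}(\cdot,0)=H_{\min}<0$, with no backward part, and this yields the contradiction.

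\textbf{Comparison.} The paper's argument is local and avoids an induction, at the price of exploiting the $\lambda$-coupling through the weighted identity. Yours is a single-component descent. It makes explicit that a flat piece of $V_j$ would have to extend to the borrowing limit, where the boundary closure of the scheme excludes it. Both arguments rely on $V<0$. The facts you use, namely $p_{\min}>0$ and $H_{\min}<0$, follow directly from the explicit form of $H$.
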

\begin{proof}
From the coercivity of the discrete Hamiltonian, $\D^-{V_{i,j}}\geq 0$. We argue by contradiction to show $\D^-{V_{i,j}}\neq 0$ for all $(i,j)$. Suppose there exists $(i^*,j^*)$, $i^*\geq 1$, such that $\D^-{V_{i^*,j^*}}= 0$, then from 
$$
{H}^{\uparrow}\lc x_{i^*},y_{j^*},V_{i^*,j^*},\D^+{V_{i^*,j^*}}\rc-{H}_{\min}\lc x_{i^*},y_{j^*},V_{i^*,j^*}\rc\geq 0
$$
 and
${H}^{\downarrow} \lc x_{i^*},y_{j^*},V_{i^*,j^*},0\rc=0$
we deduce that 
\beq\label{Eq: num Hgeq0}
\boldsymbol{H}\lc x_{i^*},y_{j^*},V_{i^*,j^*},\D^+{V_{i^*,j^*}},\D^-{V_{i^*,j^*}}\rc\geq 0.
\eeq
By multiplying the equation \cref{Eq: D-HJB H} at ${i^*,j^*}$ and ${i^*,\bar \jmath^*}$ respectively by $\lambda_{\bar \jmath^*}$ and $\lambda_{j^*}$, summing up, we obtain
\begin{align*}
\frac{\rho}{\theta}\left(\lambda_{\bar \jmath^*}V_{i^*,j^*}+\lambda_{j^*}V_{i^*,\bar \jmath^*}\right)={}&\lambda_{j^*}\boldsymbol{H}\lc x_{i^*},y_{\bar \jmath^*},{V}_{i^*,\bar \jmath^*},\D^+{V_{i^*,\bar \jmath^*}},\D^-{V_{i^*,\bar \jmath^*}}\rc\\
{}&+\lambda_{\bar \jmath^*}\boldsymbol{H}\lc x_{i^*},y_{j^*},V_{i^*,j^*},\D^+{V_{i^*,j^*}},\D^-{V_{i^*,j^*}}\rc.
\end{align*}
Then, we obtain from \cref{Eq: num Hgeq0} and \cref{Prop: sol bound late} that
\begin{align*}
\lambda_{j^*}\boldsymbol{H}\lc x_{i^*},y_{\bar \jmath^*},{V}_{i^*,\bar \jmath^*},\D^+{V_{i^*,\bar \jmath^*}},\D^-{V_{i^*,\bar \jmath^*}}\rc\leq {}&\frac{\rho}{\theta}\left(\lambda_{\bar \jmath^*}V_{i^*,j^*}+\lambda_{j^*}V_{i^*,\bar \jmath^*}\right)\\
 \leq {}&\frac{\rho}{\theta}\left(\lambda_1+\lambda_2\right)\frac{(b(x_{i^*}+y_2/r))^{1-\gamma}}{1-\gamma}<0,
\end{align*}
hence $\D^-{V_{i^*,\bar \jmath^*}}>0$. Subtracting the following two equations,
\begin{align*}
\frac{\rho}{\theta} V_{i^*,j^*}
={}&\boldsymbol{H}\lc x_{i^*},y_{j^*},V_{i^*,j^*},\D^+{V_{i^*,j^*}},\D^-{V_{i^*,j^*}}\rc
+\lambda_{j^*}\lc V_{i^*,\bar \jmath^*}-V_{i^*,j^*} \rc,\\
\frac{\rho}{\theta} V_{i^*-1,j^*}
={}&\boldsymbol{H}\lc x_{i^*-1},y_{j^*},V_{i^*-1,j^*},\D^+{V_{i^*-1,j^*}},\D^-{V_{i^*-1,j^*}}\rc\\
&+\lambda_{j^*}\lc V_{i^*-1,\bar \jmath^*}-V_{i^*-1,j^*} \rc,
\end{align*}
and taking into account that $\D^-{V_{i^*,j^*}}= 0$, we obtain
\beq\label{Eq: H-H I}
\begin{aligned}
-\lambda_{j^*}\D^-{V_{i^*,\bar \jmath^*}}={}&\boldsymbol{H}\lc x_{i^*},y_{j^*},V_{i^*,j^*},\D^+{V_{i^*,j^*}},\D^-{{V}_{i^*,j^*}}\rc\\
{}&-\boldsymbol{H}\lc x_{i^*-1},y_{j^*},{V}_{i^*-1,j^*},\D^+{V_{i^*-1,j^*}},\D^-{V_{i^*-1,j^*}}\rc.
\end{aligned}
\eeq
For brevity, let us name ${RHS}$ the right hand side of \cref{Eq: H-H I}. We are going to see ${RHS}\geq 0$. If {$i^*>1$}, then
\begin{align*}
&{RHS} \\
={}&{H}^{\downarrow} \lc x_{i^*},y_{j^*},V_{i^*,j^*},\D^-{V_{i^*,j^*}}\rc-{H}^{\downarrow} \lc x_{i^*-1},y_{j^*},V_{i^*-1,j^*},\D^-{V_{i^*-1,j^*}}\rc \tag{$I.1$}\\
{}&+{H}^{\uparrow} \lc x_{i^*},y_{j^*},V_{i^*,j^*},\D^+{V_{i^*,j^*}}\rc-{H}_{\min}\lc x_{i^*},y_{j^*},V_{i^*,j^*}\rc
\tag{$I.2$}\\
{}&-{H}^{\uparrow} \lc x_{i^*-1},y_{j^*},V_{i^*-1,j^*},\D^+{V_{i^*-1,j^*}}\rc+{H}_{\min}\lc x_{i^*-1},y_{j^*},V_{i^*-1,j^*}\rc \tag{$I.3$}
\end{align*}
From $\D^-{V_{i^*,j^*}}=0\leq \D^-{V_{i^*-1,j^*}}$ and \cref{Eq: Hdown leq 0}, we deduce 
$$
{H}^{\downarrow} \lc x_{i^*},y_{j^*},V_{i^*,j^*},\D^-{V_{i^*,j^*}}\rc=0,\quad {H}^{\downarrow} \lc x_{i^*-1},y_{j^*},V_{i^*-1,j^*},\D^-{V_{i^*-1,j^*}}\rc\leq 0,
$$
 hence $(I.1)\geq 0$. It is obvious $(I.2)\geq 0$, and we deduce from $\D^+{V_{i^*-1,j^*}}=\D^-{V_{i^*,j^*}}=0$ that $(I.3)= 0$. This yields ${RHS}\geq 0$.\par 
 If $i^*=1$, then $\D^-{V_{1,j^*}}=\D^+{V_{0,j^*}}=(V_{1,j^*}-V_{0,j^*})/\D x =0$, and 
 \begin{align*}
{RHS} ={}&{H}^{\uparrow} \lc x_{1},y_{j^*},V_{1,j^*},\D^+{V_{1,j^*}}\rc-{H}^{\uparrow} \lc \ux,y_{j^*},V_{0,j^*},0\rc\\
={}&{H}^{\uparrow} \lc x_{1},y_{j^*},V_{0,j^*},\D^+{V_{1,j^*}}\rc-{H}_{\min}\lc \ux,y_{j^*},V_{0,j^*}\rc\geq 0.
\end{align*}
If $i^*=I$, then $\D^-{V_{I,j^*}}=(V_{I,j^*}-V_{I-1,j^*})/\D x =0$ and  
 \begin{align*}
{RHS} ={}&{H}^{\downarrow} \lc x_{I},y_{j^*},V_{I,j^*},0\rc-{H}^{\downarrow} \lc x_{I-1},y_{j^*},V_{I-1,j^*},\D^-{V_{I-1,j^*}}\rc\geq 0.
\end{align*}
 We conclude by observing ${RHS}\geq 0$, contradicts the fact that $\D^-{V_{i^*,\bar \jmath^*}}>0$.  
\end{proof}
\begin{corollary}\label{Cor: Dv positive}
For any $\D x$ and for any solution $V$ to \cref{Eq: D-HJB H}, there exists $\varsigma>0$ such that
$\D^-{V_{i,j}}\geq \varsigma$ for all $i\geq 1$ and $j\in \{1,2\}$. 
\end{corollary}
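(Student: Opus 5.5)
The corollary follows from \cref{Prop: positive Dv} by a finiteness argument. For a fixed step $\D x$ the grid $\mathcal{G}^{\D x}$ has only $I+1$ nodes, so for a given solution $V$ of \cref{Eq: D-HJB H} the family $\{\D^-V_{i,j}:\,1\le i\le I,\ j\in\{1,2\}\}$ has exactly $2I$ members. By \cref{Prop: positive Dv}, each of them is strictly positive. I will therefore set
\[
\varsigma:=\min_{1\le i\le I,\; j\in\{1,2\}}\D^-V_{i,j}.
\]
This is a minimum of finitely many strictly positive reals, hence $\varsigma>0$. By construction, $\D^-V_{i,j}\ge\varsigma$ for all $i\ge 1$ and $j\in\{1,2\}$.

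There is no real obstacle once \cref{Prop: positive Dv} is available. The one point to check is the intended quantifier order, since $\varsigma$ may depend on both $\D x$ and $V$. The statement, read as ``for any $\D x$ and any solution $V$, there exists $\varsigma$'', allows this dependence, so the finite minimum suffices.

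If a bound depending only on $\D x$ (uniform over solutions) were wanted, I would argue as follows. \cref{Prop: sol bound late} gives $\check U\preceq V\preceq\check V$, so all solutions for a fixed $\D x$ lie in a compact set. Uniqueness then follows from \cref{Prop: comparison}, so in the case $\theta\ge1$ there is in fact only one solution and the finite minimum already gives a uniform $\varsigma$. I will record this remark but keep the main proof as the finite-minimum argument.
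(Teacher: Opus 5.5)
Your argument is correct and is what the paper intends. The corollary is stated without a separate proof as an immediate consequence of Proposition~\ref{Prop: positive Dv}, with $\varsigma$ the minimum of the finitely many strictly positive differences $\D^-V_{i,j}$, $1\le i\le I$, $j\in\{1,2\}$, and your side remark is also sound: under the standing assumption $\theta\ge 1$ of that section, Proposition~\ref{Prop: comparison} gives uniqueness, so $\varsigma$ depends only on $\D x$.
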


\begin{proposition}\label{Prop: discrete Dv bound}
Let $V$ be a solution to \cref{Eq: D-HJB H}. There exists a constant $C>0$ uniform in $\D x$ such that, for all $(i,j)$, $i\geq 1$ and $j\in \{1,2\}$, $\D^+{V_{i-1,j}}=\D^-{V_{i,j}}\leq C$.
\end{proposition}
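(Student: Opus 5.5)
The plan is to bound $\D^-V_{i,j}$ using the coercivity of $H$ in $p$ from \cref{H coerc}, together with the a priori bounds $\check U\preceq V\preceq\check V$ of \cref{Prop: sol bound late}. The key observation is that, by \cref{Prop: positive Dv}, every backward difference is positive. The term $H^{\downarrow}$ in the discrete Hamiltonian is only informative when $\D^-V_{i,j}\le p_{\min}$; when $\D^-V_{i,j}$ is large, the information must instead come from the forward term at node $i-1$, since $\D^+V_{i-1,j}=\D^-V_{i,j}$. So we fix $i\ge1$, write $p:=\D^+V_{i-1,j}$, and look at the equation \cref{Eq: D-HJB H} at node $i-1$.

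First step: lower-bound the Hamiltonian at node $i-1$. Since $i-1\le I-1$, the node $i-1$ is either $\ux$ or interior. In the interior case, \cref{Lemma: H monotone} gives $H^{\downarrow}(\cdot,\D^-V_{i-1,j})\le0$, but we need a lower bound. By definition $H^{\downarrow}\ge H_{\min}$ whenever $\D^-V_{i-1,j}\ge0$, which holds by \cref{Prop: positive Dv}. Hence
\[
\boldsymbol H\ge H^{\uparrow}(x_{i-1},y_j,V_{i-1,j},p).
\]
If $p\ge p_{\min}$, the right-hand side equals $H(x_{i-1},y_j,V_{i-1,j},p)$. At $i-1=0$ this lower bound holds trivially.

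Second step: upper-bound the left side. From the equation,
\[
H^{\uparrow}(x_{i-1},y_j,V_{i-1,j},p)\le \frac{\rho}{\theta}V_{i-1,j}-\lambda_j(V_{i-1,\bar\jmath}-V_{i-1,j})\le \lambda_j\,|\check U_{i-1,\bar\jmath}|.
\]
Here we used $V<0$ (as $V\preceq\check V<0$) and $V_{i-1,j}-V_{i-1,\bar\jmath}\le \check V_{i-1,j}-\check U_{i-1,\bar\jmath}\le|\check U_{i-1,\bar\jmath}|$. The quantity $|\check U|$ is bounded uniformly on $[\ux,\bar x]$ because $r\ux+y_1>0$. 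So the right-hand side is a constant $M$ independent of $\D x$.

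Third step: turn this into a bound on $p$, using uniform coercivity. Write
\[
H(x,y,v,p)\ge (rx+y)\,p-\frac{\rho^\psi}{1-\psi}\,p^{1-\psi}\,K,
\]
where $K$ bounds $((1-\g)v)^{\frac{1-\g\psi}{1-\g}}$ for $v$ between $\check U$ and $\check V$ on the compact interval. This $K$ is finite because $(1-\g)v$ stays in a compact subset of $(0,\infty)$. Also $rx+y_j\ge r\ux+y_1>0$. Since $p^{1-\psi}$ is sublinear ($0<\psi<1$), there is $C$ depending only on these constants and $M$ such that $H(x,y,v,p)>M$ for all $p>C$. Moreover $p_{\min}$ is itself uniformly bounded, being the explicit minimizer $\rho((1-\g)v)^{\frac{\psi^{-1}-\g}{1-\g}}(rx+y)^{-1/\psi}$ from \cref{H min}; we enlarge $C$ beyond it. Then $p>C$ would give $H^{\uparrow}=H>M$, a contradiction. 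Hence $\D^-V_{i,j}=p\le C$.

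The main obstacle is making sure the interior lower bound $\boldsymbol H\ge H^{\uparrow}$ is valid. This requires $\D^-V_{i-1,j}\ge0$, which is exactly what \cref{Prop: positive Dv} supplies. It also requires uniformity of all constants in $\D x$, which follows from the barrier bounds on the fixed interval $[\ux,\bar x]$.
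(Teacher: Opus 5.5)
Your proof is correct and follows the paper's route. You look at the forward difference $\D^+V_{i-1,j}$ and restrict to values above a $\D x$-uniform bound on $p_{\min}$, so that $H^{\uparrow}=H$; you then bound $H^{\uparrow}$ from above through the equation and the barriers $\check U\preceq V\preceq\check V$, and conclude by coercivity. You also make explicit what the paper leaves implicit: $\boldsymbol{H}\ge H^{\uparrow}$ at interior nodes (because $H^{\downarrow}\ge H_{\min}$ for nonnegative backward differences), and the uniform-in-$\D x$ form of the coercivity estimate.
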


\begin{proof} From \cref{Prop: sol bound late}, we know that the grid function $V$ is bounded from below by $\check{U}_{0,1}$ and from above by $\check{V}_{I,2}$. Hence, there exists a constant $\tilde{C}>0$, uniform with respect to $\D x$, such that any solution $V$ to \cref{Eq: D-HJB H} satisfies ${p}_{\min}\lc x_i,y_j,V_{i,j}\rc\leq \tilde{C}$. Therefore, it is not restrictive to assume that the constant in the statement of \cref{Prop: discrete Dv bound} is larger than $\tilde{C}$. This allows us to focus on the pairs of indices $(i,j)$, $0\leq i\leq I-1$ and $j\in \{1,2\}$, such that $\D^+{V_{i,j}}\geq \tilde{C}>{p}_{\min}\lc x_i,y_j,V_{i,j}\rc$, thus
$$
H\lc x_i,y_j,V_{i,j},\D^+{V_{i,j}}\rc={H}^{\uparrow} \lc x_i,y_j,V_{i,j},\D^+{V_{i,j}}\rc\leq \frac{\rho}{\theta} V_{i,j}-\lambda_j\lc V_{i,\bar \jmath}-V_{i,j} \rc.
$$
We conclude by using the above mentioned bound on $V$ and the coercivity of the Hamiltonian. 
\end{proof}

\subsection{A Howard-Newton algorithm}
With the $\varsigma$ bound in \cref{Cor: Dv positive}, we consider an iterative method for solving the regularized HJB equation \cref{Eq: D-HJB HR} instead of \cref{Eq: D-HJB H} since they have the same unique solution if $1/{\eps}>\rho^{\p}\varsigma^{-\p}\lc(1-\g)\check{V}_{I,2}\rc^{\frac{1-\g \p}{1-\g}}$ ($\check{V}$ is the supersolution defined in \cref{Prop:sub-super disc}). The  Howard-Newton iterative algorithm is described in \cref{HN algo}.
\begin{algorithm}[!h]
\caption{Howard-Newton algorithm}
\label{HN algo}
\begin{algorithmic}[1]
\State Initialize 
$
c^{F,(0)}_{i,j}=rx_i+y_j,\,\,c^{B,(0)}_{i,j}=rx_i+y_j,
$
and set $n = 0$
\Repeat
\State Update the saving policy: 
$
s^{F,({n})}_{i,j}=rx_i+y_j-c^{F,({n})}_{i,j},\quad s^{B,({n})}_{i,j}=rx_i+y_j-c^{B,({n})}_{i,j}.
$ 
\State Update the value function by solving with Newton's method: 
\beq\label{Eq: Howard solve V}
\begin{aligned}
\frac{\rho}{\theta} V^{({n})}_{i,j}
={}& s^{F,({n})}_{i,j}\D^+{V^{({n})}_{i,j}}+s^{B,({n})}_{i,j}\D^-{V^{({n})}_{i,j}}+\mathcal{F}\lc c^{F,({n})}_{i,j},V^{({n})}_{i,j}\rc+\mathcal{F}\lc c^{B,({n})}_{i,j},V^{({n})}_{i,j}\rc\\
{}&-\mathcal{F}\lc \bar{c}_{j}(x_i),V^{({n})}_{i,j}\rc+\lambda_j\lc V^{({n})}_{i,\bar \jmath}-V^{({n})}_{i,j} \rc.
\end{aligned}
\eeq
\State Update the consumption policy: 
\beq\label{Eq: Howard-Newton P-update}
\left\{
\begin{aligned}
\,\,  &c^{F,({n}+1)}_{i,j}=\min\left\{\rho^{\p}\lc \D^+{V^{({n})}_{i,j}}\rc_+^{-\p}\lc(1-\g)V^{(n)}_{i,j}\rc^{\frac{1-\g \p}{1-\g}},\bar{c}_{j}(x_i)\right\}\,\text{for}\,\,x_i<\bar{x},\\
& c^{F,({n}+1)}_{I,j}=r\bar{x}+y_j,\\
&c^{B,({n}+1)}_{i,j}=\max\left\{\min\left\{1/\eps,\rho^{\p}\lc \D^-{V^{({n})}_{i,j}}\rc_+^{-\p}\lc(1-\g)V^{(n)}_{i,j}\rc^{\frac{1-\g \p}{1-\g}}\right\},\bar{c}_{j}(x_i)\right\}\\
 &\text{for}\,\,x_i>\ux,\quad c^{B,({n}+1)}_{0,j}=r\ux+y_j.
\end{aligned}
   \right.
\eeq
\State $n \gets n+ 1$
\Until{$\sum_j\left(\max_i\left\vert c^{F,({n}+1)}_{i,j}-c^{F,({n})}_{i,j}\right\vert+\max_i\left\vert c^{B,({n}+1)}_{i,j}-c^{B,({n})}_{i,j}\right\vert\right)<\text{Tol}_c$.}
\end{algorithmic}
\end{algorithm}

\begin{lemma}\label{Lemma: c max}
Consider the admissible set of controls 
$\mathcal{C}^{F,(n+1)}_{i,j}=[0,\bar{c}_{j}(x_i)]$ and
 $\mathcal{C}^{B,(n+1)}_{i,j}=\left[\bar{c}_{j}(x_i),1/\eps\right]$.
Then for $n\geq 0$,
$$
\begin{aligned}
c^{F,({n}+1)}_{i,j}=\underset{c\in \mathcal{C}^{F,(n+1)}_{i,j}}{\arg \max}\left\{ -c\D^+{V^{({n})}_{i,j}}+\mathcal{F}\lc c,V^{({n})}_{i,j}\rc\right\},\\
c^{B,({n}+1)}_{i,j}=\underset{c\in \mathcal{C}^{B,(n+1)}_{i,j}}{\arg \max}\left\{ -c\D^-{V^{({n})}_{i,j}}+\mathcal{F}\lc c,V^{({n})}_{i,j}\rc\right\}.
\end{aligned}
$$
\end{lemma}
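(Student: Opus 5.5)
The lemma is a one-dimensional concave maximization at each node $(i,j)$, with $V^{(n)}$ held fixed. I would write $p$ for the relevant difference quotient ($p=\D^+V^{(n)}_{i,j}$ in the forward case and $p=\D^-V^{(n)}_{i,j}$ in the backward case). I would also set $K:=\lc(1-\g)V^{(n)}_{i,j}\rc^{\frac{1-\g\p}{1-\g}}>0$. This is well defined because $V^{(n)}_{i,j}<0$; I would justify this by noting that the Newton step returns a negative grid function, bounded between the barriers $\check U$ and $\check V$ of \cref{Prop: sub super HR2}. The objective is then
\[
g(c)=-cp+\frac{\rho}{1-\p^{-1}}\,c^{1-\p^{-1}}\lc(1-\g)V^{(n)}_{i,j}\rc^{1-\theta}.
\]
Since $0<\p<1$, we have $1-\p^{-1}<0$, so $\frac{1}{1-\p^{-1}}c^{1-\p^{-1}}$ is strictly increasing and strictly concave on $(0,\infty)$. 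It tends to $-\infty$ as $c\to0^+$. Hence $g$ is strictly concave on $(0,\infty)$, and it has a unique maximizer over any interval of the form $[a,b']\cap(0,\infty)$. We have $\bar c_j(x_i)>0$ by \cref{asp1}, so $c=0$ is never optimal and the constraint $c\ge 0$ is harmless.

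\textbf{Case $p>0$.} The derivative is $g'(c)=-p+\rho\,c^{-\p^{-1}}\lc(1-\g)V^{(n)}_{i,j}\rc^{1-\theta}$, and it vanishes exactly at the unconstrained maximizer
\[
c^\circ=\rho^{\p}p^{-\p}\lc(1-\g)V^{(n)}_{i,j}\rc^{\p(1-\theta)}.
\]
I would check the exponent identity $\p(1-\theta)=\frac{1-\g\p}{1-\g}$. This follows from $\theta=\frac{1-\p^{-1}}{1-\g}$, and it is the same computation that produced \cref{c optimal}. So $c^\circ=\rho^{\p}p^{-\p}K$. Because $g$ is strictly concave, its maximizer over an interval $[a,b']$ is the projection $\min\{\max\{c^\circ,a\},b'\}$. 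For the forward set $[0,\bar c_j(x_i)]$ this projection is $\min\{c^\circ,\bar c_j(x_i)\}$, since $c^\circ>0$. For the backward set $[\bar c_j(x_i),1/\eps]$ it is $\max\{\min\{1/\eps,c^\circ\},\bar c_j(x_i)\}$. Condition \cref{Eq: eps lower} gives $\bar c_j(x_i)<1/\eps$, so the interval is nonempty and the order of $\min$ and $\max$ does not matter. These two formulas are exactly \cref{Eq: Howard-Newton P-update}.

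\textbf{Case $p\le 0$.} Here $g'(c)>0$ for every $c>0$, so $g$ is strictly increasing and the maximizer is the right endpoint: $\bar c_j(x_i)$ in the forward case and $1/\eps$ in the backward case. In the update formula, I would read $(p)_+^{-\p}=0^{-\p}$ as $+\infty$. The formula then returns $\min\{+\infty,\bar c_j\}=\bar c_j$ and $\max\{\min\{1/\eps,+\infty\},\bar c_j\}=1/\eps$, which agree with the endpoints. This convention is the only point needing care, and I would state it explicitly.

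\textbf{Boundary nodes.} At $i=I$ the forward control is fixed to $r\bar x+y_j$, and at $i=0$ the backward control is fixed to $r\ux+y_j$. At these nodes the corresponding term does not enter the scheme in the same way; equivalently, the admissible set collapses to a single point, matching the convention already used in \cref{Eq: CF CB proj}. The main obstacle is not analytic. It is bookkeeping: checking the exponent identity, and handling nonpositive difference quotients through the $(\cdot)_+$ convention.
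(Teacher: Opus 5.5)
Your proof is correct and matches the argument the paper leaves implicit: strict concavity of $c\mapsto -cp+\mathcal{F}(c,V^{(n)}_{i,j})$ plus the first-order condition \cref{c optimal}, projected onto the admissible interval (nonempty by \cref{Eq: eps lower}), with the convention $(p)_+^{-\p}=+\infty$ when $p\le 0$. At the boundary nodes your collapse of the admissible set is a convention rather than a proof; since the paper declares $\D^+V^{(n)}_{I,j}$ and $\D^-V^{(n)}_{0,j}$ arbitrary, you could instead take $\D^+V^{(n)}_{I,j}=0$ and $\D^-V^{(n)}_{0,j}\ge p_{\min}(\ux,y_j,V^{(n)}_{0,j})$, which makes the argmax identity hold literally there.
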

We will see in the proof of \cref{Thm: Howard-Newton} that the set for $\mathcal{C}^{B,(n+1)}_{i,j}$ is indeed nonempty for all $n$.  
\begin{theorem}\label{Thm: Howard-Newton}
The sequence $V^{(n)}$ generated by \cref{HN algo} converges to the unique solution of \cref{Eq: D-HJB H}. 
\end{theorem}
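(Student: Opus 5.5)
We follow the classical Howard (policy iteration) argument, adapted to a policy-evaluation step that is itself nonlinear in $V$. The claim is that $V^{(n)}$ is well defined, nondecreasing in $n$, and bounded above by $\check V$. Its limit then solves the regularized equation \cref{Eq: D-HJB HR}, which coincides with \cref{Eq: D-HJB H} for $\eps$ small enough.

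\emph{Step 1: the policy-evaluation step is well posed.} Fix admissible policies $c^{F,(n)}\in[0,\bar c_j(x_i)]$ and $c^{B,(n)}\in[\bar c_j(x_i),1/\eps]$. Then \cref{Eq: Howard solve V} is a finite system of the form $\frac{\rho}{\theta}V = L^{(n)}V + G^{(n)}(V)$. Here $L^{(n)}$ is the upwind transport matrix together with the switching operator; it has nonnegative off-diagonal entries because $s^F\ge 0$ and $s^B\le 0$, and it has zero row sums. The map $G^{(n)}$ acts componentwise and is decreasing and concave in $V_{i,j}$, by \cref{F monotone} and the joint concavity of $\mathcal F$ for $\theta\ge1$. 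The mapping $V\mapsto \frac{\rho}{\theta}V-L^{(n)}V-G^{(n)}(V)$ is therefore a convex, diagonally increasing operator whose Jacobian is an M-matrix, and this is what lets us invoke the monotone Newton convergence theory. For the starting point, $\check U$ is a subsolution of the $n$-th linearized problem, since with $c=\bar c_1$ we have equality as in \cref{Prop:sub-super disc}. Moreover, every policy equation is dominated by \cref{Eq: D-HJB HR}, because $\boldsymbol H_\eps$ is a supremum over policies, so $\check V$ is a supersolution of each policy equation. A discrete comparison principle for the policy equation, proved exactly as \cref{Prop: comparison}, then gives existence and uniqueness via Newton or Perron, and also $\check U\preceq V^{(n)}\preceq\check V$. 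In particular $(1-\g)V^{(n)}$ stays in a compact subset of $(0,\infty)$, so the policies produced by \cref{Eq: Howard-Newton P-update} are admissible and the sets $\mathcal C^{B,(n+1)}$ are nonempty by \cref{Eq: eps lower}.

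\emph{Step 2: monotonicity.} By \cref{Lemma: c max}, the policy $c^{(n+1)}$ maximizes the Hamiltonian at $V^{(n)}$. Hence, evaluating the $(n+1)$-th policy operator at $V^{(n)}$ gives
\[
\tfrac{\rho}{\theta}V^{(n)}_{i,j}\le s^{F,(n+1)}\D^+V^{(n)}+s^{B,(n+1)}\D^-V^{(n)}+\mathcal F(c^{F,(n+1)},V^{(n)})+\mathcal F(c^{B,(n+1)},V^{(n)})-\mathcal F(\bar c_j,V^{(n)})+\lambda_j(V^{(n)}_{i,\bar\jmath}-V^{(n)}_{i,j}).
\]
Thus $V^{(n)}$ is a subsolution of the $(n+1)$-th policy equation, and comparison yields $V^{(n)}\preceq V^{(n+1)}\preceq \check V$. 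The main obstacle is this comparison for the fixed-policy equation. It needs the combination $\mathcal F(c^F,V)+\mathcal F(c^B,V)-\mathcal F(\bar c_j,V)$ to be nonincreasing in $V$, which is not immediate since one term enters with a minus sign. We would handle it as in \cref{Lemma: num H_V}, using the explicit form $\mathcal F(c,V)\propto c^{1-\p^{-1}}((1-\g)V)^{1-\theta}$. Writing $\kappa=\rho/(1-\p^{-1})<0$, the combination equals $\kappa\,[(c^F)^{1-\p^{-1}}+(c^B)^{1-\p^{-1}}-\bar c^{1-\p^{-1}}]\,((1-\g)V)^{1-\theta}$. Since $c^F\le\bar c$, the bracket is positive, so the combination is decreasing in $V$ when $\theta\ge1$. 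The same sign argument gives concavity, which is what Newton needs.

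\emph{Step 3: passage to the limit.} The monotone bounded sequence $V^{(n)}$ converges to some $V^\infty$ with $\check U\preceq V^\infty\preceq\check V$. The policy update \cref{Eq: Howard-Newton P-update} is continuous in $V$ on this range, because the truncations $(\cdot)_+$ and $1/\eps$ remove singularities. Hence $c^{(n)}\to c^\infty$, the policy obtained from $V^\infty$. Passing to the limit in \cref{Eq: Howard solve V} shows that $V^\infty$ solves \cref{Eq: D-HJB HR} with its own optimal policy. For $\eps$ small, \cref{Cor: Dv positive} makes the cap $1/\eps$ inactive, so \cref{Eq: D-HJB HR} and \cref{Eq: D-HJB H} have the same solution set. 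The solution of \cref{Eq: D-HJB H} exists by \cref{Prop: Perron} and is unique by \cref{Prop: comparison}, so $V^{(n)}$ converges to it.
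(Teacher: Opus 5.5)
Your proof follows the paper's route. \cref{Lemma: c max} makes $V^{(n)}$ a subsolution of the $(n+1)$-th frozen-policy equation. Comparison for that equation gives $V^{(n)}\preceq V^{(n+1)}$; it rests on exactly your computation that $\mathcal F(c^F,\cdot)+\mathcal F(c^B,\cdot)-\mathcal F(\bar c_j,\cdot)$ is nonincreasing for $\theta\ge1$. Finally, $\check V$ bounds the sequence from above. Your Steps 1 and 3 usefully spell out what the paper leaves implicit: solvability of the Newton inner step via the convex M-function structure, and identification of the limit through continuity of the policy map.

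One claim in Step 1 is wrong as stated: $\check U$ is a subsolution of the $n$-th policy equation only for $n=0$. The inequality you borrow from \cref{Prop:sub-super disc} comes from choosing $c=\bar c_1$ \emph{inside the supremum} defining $\boldsymbol H$. Once the policy is frozen at $c^{(n)}$, which is optimal for $V^{(n-1)}$ and not for $\check U$, there is no supremum to exploit. For instance, at a node where $c^{B,(n)}$ is at or near the cap $1/\eps$, the term $s^{B,(n)}\D^-\check U_{i,j}$ is of order $-\D^-\check U_{i,j}/\eps$. For small $\eps$ this overwhelms the remaining bounded terms, and the subsolution inequality fails.

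The fix is to run your Steps 1 and 2 as a single induction. For the $n$-th policy equation, use $V^{(n-1)}$ (a subsolution by \cref{Lemma: c max}) and $\check V$ as the barriers for Newton or Perron. Then get $\check U\preceq V^{(0)}\preceq V^{(n)}$ from the base case, where the policy really is $\bar c_j$. This is how the paper obtains its lower bound: it only checks $V^{(0)}\succeq\check U_{0,j}$ and then uses monotonicity.

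A smaller point concerns the phrase ``same solution set'' in Step 3. It only gives you that the solution of \cref{Eq: D-HJB H} solves \cref{Eq: D-HJB HR}. To conclude $V^\infty=V$ you also need uniqueness for \cref{Eq: D-HJB HR}, which follows from the proof of \cref{Prop: comparison} combined with \cref{Lemma: num H_Veps}.
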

\begin{proof}
{\it{Step 1.}} We show that $V^{(n)}$ is bounded above uniformly w.r.t. $n$. From \cref{Eq: Howard solve V}, we know $(V^{(n)}_{i,1},V^{(n)}_{i,2})$ is a subsolution of \cref{Eq: D-HJB HR}. We infer from \cref{Prop: sub super HR2} and \cref{Prop: comparison} that $V^{(n)}_{i,j}\leq \check{V}_{i,j}$ for all $n$. This ensures that $\lc(1-\g)V^{(n)}_{i,j}\rc^{\frac{1-\g \p}{1-\g}}$ is always well defined and bounded. \par
{\it{Step 2.}} We claim that $V^{(0)}\preceq V^{(1)}$. Indeed, suppose by contradiction that 
\beq\label{Eq: Vn-Vn+1}
V_{i^*,j^*}^{(0)}- V^{(1)}_{i^*,j^*}=\max_{i,j}\{V^{(0)}_{i,j}-V^{(1)}_{i,j}\}=\delta>0.
\eeq
This implies
\beq\label{Eq: lVn}
\lambda_{j^*}\lc V^{({0})}_{i^*,\bar \jmath^*}-V^{({0})}_{i^*,j^*} \rc\leq \lambda_{j^*}\lc V^{(1)}_{i^*,\bar \jmath^*}-V^{(1)}_{i^*,j^*} \rc,
\eeq
\beq\label{Eq Dvn Dvn+1}
\D^+V^{({1})}_{i^*,j^*}\geq \D^+V^{(0)}_{i^*,j^*}\,\,{\text{if}}\,\,i^*<I, \quad \D^-V^{({1})}_{i^*,j^*}\leq \D^-V^{(0)}_{i^*,j^*}\,\,{\text{if}}\,\,i^*>0.
\eeq
We consider three cases.\par
\fbox{\it{Case 1:}} $0<i^*<I$. From the initialization of \cref{HN algo}, we know
\beq\label{Eq: Howard V0}
\begin{aligned}
\frac{\rho}{\theta} V^{({0})}_{i,j}
= \mathcal{F}\lc \bar{c}_{j}(x_i),V^{({0})}_{i,j}\rc+\lambda_j\lc V^{({0})}_{i,\bar \jmath}-V^{({0})}_{i,j} \rc.
\end{aligned}
\eeq
It is easy to verify that $(\check{U}_{0,1},\check{U}_{0,2})$ is a subsolution to \cref{Eq: Howard V0}, hence $V^{({0})}_{i,j}\geq \check{U}_{0,j}$. We deduce from \cref{Eq: Howard solve V}, \cref{Eq: Howard-Newton P-update} and \cref{Lemma: c max} that
\beq\label{Eq: Howard Vn}
\begin{aligned}
\frac{\rho}{\theta} V^{({0})}_{i^*,j^*}
\leq{}& s^{F,({1})}_{i^*,j^*}\D^+{V^{({0})}_{i^*,j^*}}+s^{B,({1})}_{i^*,j^*}\D^-{V^{({0})}_{i^*,j^*}}+\mathcal{F}\lc c^{F,({1})}_{i^*,j^*},V^{({0})}_{i^*,j^*}\rc+\mathcal{F}\lc c^{B,({1})}_{i^*,j^*},V^{({0})}_{i^*,j^*}\rc\\
{}&-\mathcal{F}\lc \bar{c}_{j}(x_{i^*}),V^{({0})}_{i^*,j^*}\rc+\lambda_{j^*}\lc V^{({0})}_{i^*,\bar \jmath^*}-V^{({0})}_{i^*,j^*} \rc.
\end{aligned}
\eeq
From \cref{Eq: Vn-Vn+1} and same argument as in \cref{Lemma: num H_V},
\beq\label{Eq: Fvn}
\begin{aligned}
&\mathcal{F}\lc c^{F,({1})}_{i^*,j^*},V^{({0})}_{i^*,j^*}\rc+\mathcal{F}\lc c^{B,({1})}_{i^*,j^*},V^{({0})}_{i^*,j^*}\rc-\mathcal{F}\lc \bar{c}_{j}(x_{i^*}),V^{({0})}_{i^*,j^*}\rc\\
\leq {}&\mathcal{F}\lc c^{F,({1})}_{i^*,j^*},V^{({1})}_{i^*,j^*}\rc+\mathcal{F}\lc c^{B,({1})}_{i^*,j^*},V^{({1})}_{i^*,j^*}\rc-\mathcal{F}\lc \bar{c}_{j}(x_{i^*}),V^{({1})}_{i^*,j^*}\rc.
\end{aligned}
\eeq
By construction, $s^{F,({1})}_{i^*,j^*}\geq 0$ and $s^{B,({1})}_{i^*,j^*}\leq 0$. Then \cref{Eq Dvn Dvn+1} implies that
\beq\label{Eq: sDvn}
s^{F,({1})}_{i^*,j^*}\D^+{V^{(0)}_{i^*,j^*}}+s^{B,({1})}_{i^*,j^*}\D^-{V^{(0)}_{i^*,j^*}}\leq s^{F,({1})}_{i^*,j^*}\D^+{V^{(1)}_{i^*,j^*}}+s^{B,({1})}_{i^*,j^*}\D^-{V^{(1)}_{i^*,j^*}}.
\eeq
From \cref{Eq: lVn}, \cref{Eq: Fvn}, \cref{Eq: sDvn} and the equation \cref{Eq: Howard solve V} for $V^{(1)}$, we get $\rho \delta/\theta\leq 0$, a contradiction. \par
\fbox{\it{Case 2:}} $i^*=0$. The same argument holds, provided \cref{Eq: Howard Vn} is replaced with
$$
\begin{aligned}
\frac{\rho}{\theta} V^{(0)}_{0,j^*}
\leq s^{F,({1})}_{0,j^*}\D^+{V^{(0)}_{0,j^*}}+\mathcal{F}\lc c^{F,({1})}_{0,j^*},V^{(0)}_{0,j^*}\rc+\lambda_{j^*}\lc V^{(1)}_{0,\bar \jmath^*}-V^{(1)}_{0,j^*} \rc.
\end{aligned}
$$
The conclusion follows similarly as in the previous case. \par
\fbox{\it{Case 3:}} $i^*=I$. The same argument holds, provided \cref{Eq: Howard Vn} is replaced with
$$
\begin{aligned}
\frac{\rho}{\theta} V^{(0)}_{I,j^*}
\leq s^{B,({1})}_{I,j^*}\D^-{V^{(0)}_{I,j^*}}+\mathcal{F}\lc c^{B,({1})}_{I,j^*},V^{(0)}_{I,j^*}\rc+\lambda_{j^*}\lc V^{(1)}_{I,\bar \jmath^*}-V^{(1)}_{I,j^*} \rc,
\end{aligned}
$$
and the desired result follows.\par
{\it{Step 3.}} Following the proof of {\it{Step 2}} , we can obtain by induction $V^{(n)}\preceq V^{(n+1)}$.\par
Therefore, $V^{(n)}$ is an increasing sequence w.r.t. $n$ and is bounded above uniformly in $n$, yielding the desired result.
\end{proof}

\subsection{Convergence rate to the unique viscosity solution when $\theta \geq 1$}\label{Subsec: rate}
We denote by $(v_1,v_2)$ the unique solution of \cref{HJB}, and $(V_1,V_2)$ the unique solution of the discrete problem \cref{Eq: D-HJB H} on $ \mathcal{G}^{\D x}$.
\begin{lemma}\label{Lemma: error FD}
We make the same assumptions as in \cref{Prop: saving}, then there exists a constant $C$ (uniform in $\D x$) such that
\beq\label{Eq: FD erro v2}
\max\lc\max_{\,x\in \mathcal{G}^{\D x},\,x>\ux}\left\{\D^-{v_{2}(x)}-Dv_{2}(x)\right\},\,\max_{\,x\in \mathcal{G}^{\D x},\,x<\bar{x}}\left\{Dv_{2}(x)-\D^+{v_{2}(x)}\right\}\rc\leq C\D x.
\eeq
There exists $\eta$ sufficiently small (uniformly in $\D x$) such that
\beq\label{Eq: FD boundDx}
\max_{\,x\in \mathcal{G}^{\D x},\,\ux<x<\eta}\left\{\D^-{v_{1}(x)}-Dv_{1}(x)\right\}\leq C\sqrt{\D x},
\eeq
\beq\label{Eq: s1FD boundDx}
\max_{\,x\in \mathcal{G}^{\D x},\,\ux<x<\eta}\left\{-s_1(x)\lc\D^-{v_{1}(x)}-Dv_{1}(x)\rc\right\}\leq C\D x.
\eeq
\end{lemma}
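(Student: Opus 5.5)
The three estimates all come from Taylor's formula with integral remainder, $\D^-v(x)-Dv(x)=-\frac{1}{\D x}\int_{x-\D x}^{x}(y-x+\D x)D^2v(y)\,dy$, and $Dv(x)-\D^+v(x)=-\frac{1}{\D x}\int_x^{x+\D x}(x+\D x-y)D^2v(y)\,dy$. These are valid because $v_j\in W^{2,\infty}_{loc}(\ux,\bar x]$ by \cref{Prop: v sol}. The structural fact I use is the concavity from \cref{Prop: v sol}, i.e. $D^2v_j\le 0$. It makes the one-sided errors in \cref{Eq: FD erro v2} and \cref{Eq: FD boundDx} nonnegative, so only upper bounds are needed, and each is controlled by $\D x$ times an average of $-D^2v_j$ over one cell.

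\emph{Estimate \cref{Eq: FD erro v2}.} The key claim is that $D^2v_2$ is bounded on the whole of $[\ux,\bar x]$, unlike $D^2v_1$. To get it I would use that $s_2(\ux)>0$ (\cref{Prop: saving}), so by continuity $s_2$ is bounded away from zero on some $[\ux,\ux+\eta_2]$. There the HJB equation for $j=2$ can be differentiated: one obtains $s_2D^2v_2$ in terms of $Dv_2$, $Dv_1$ and $H_v$, all of which are bounded since $v$ is $C^1$ up to $\ux$. Hence $D^2v_2$ is bounded near $\ux$. On $[\ux+\eta_2,\bar x]$ it is bounded by the $W^{2,\infty}_{loc}$ regularity. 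The Taylor formula then gives both one-sided bounds with $C=\frac12\|D^2v_2\|_\infty$. The point $\bar x$ is covered because $W^{2,\infty}_{loc}(\ux,+\infty)$ includes a neighbourhood of $\bar x$.

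\emph{Estimate \cref{Eq: FD boundDx}.} I take $\eta$ as in \cref{Prop: D2v1 ux}, so $0\le -D^2v_1(y)\le K(y-\ux)^{-1/2}$ on $(\ux,\eta)$. For a grid point $x=x_i$ with $i\ge1$, write $t=y-(x-\D x)\in[0,\D x]$. The error is at most
$$\frac{K}{\D x}\int_0^{\D x} t\,\bigl(t+(i-1)\D x\bigr)^{-1/2}dt\le \frac{K}{\D x}\int_0^{\D x} t^{1/2}dt=\tfrac23 K\sqrt{\D x}.$$
For grid points with $\ux<x<\eta$ whose backward cell might leave $(\ux,\eta)$, I would shrink $\eta$ slightly and handle the remainder using local boundedness of $D^2v_1$.

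\emph{Estimate \cref{Eq: s1FD boundDx}.} This is the delicate step: the naive product of \cref{Eq: -s1 bound} and the $\sqrt{\D x}$ bound only gives $\sqrt{x-\ux}\sqrt{\D x}$, which is not $O(\D x)$. I would instead pull $-s_1(x)$ inside the integral using \cref{Eq: -s1 bound}, which gives $-s_1(x)\le K'\sqrt{x-\ux}$. For $y\in[x-\D x,x]$ with $x=x_i$, $i\ge2$, we have $x-\ux\le 2(y-\ux)$, so
$$-s_1(x)\,|D^2v_1(y)|\le K'\sqrt{2}\sqrt{y-\ux}\,K(y-\ux)^{-1/2}=\sqrt2KK'.$$
The error is therefore at most $\frac{1}{\D x}\int(\cdot)\,t\,dt\le \tfrac{\sqrt2}{2}KK'\D x$.

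For $i=1$, i.e. $x=\ux+\D x$, the factor $-s_1(x)\le K'\sqrt{\D x}$ multiplies the $\frac23K\sqrt{\D x}$ bound from the previous step, which again gives $O(\D x)$. Equivalently, \cref{Eq: s1D2v1 ux} says $s_1D^2v_1$ is bounded, and that is exactly what makes the rate linear.

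The main obstacle I anticipate is justifying the uniform boundedness of $D^2v_2$ up to $\ux$; I expect this is either available from the regularity theory in \cite{Achdou:2026aa} or obtained by the differentiation argument above.
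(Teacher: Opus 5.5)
Your proposal is correct and follows essentially the same route as the paper: Taylor's formula with integral remainder, the bounds \cref{Eq: -D2v1 bound} and \cref{Eq: -s1 bound} on $(\ux,\eta)$, and a separate treatment of the node $x=\ux+\D x$ versus the nodes $x\ge \ux+2\D x$. At the latter nodes your inequality $x-\ux\le 2(y-\ux)$ plays the role of the paper's $\sqrt{\D x}/\sqrt{x-\D x-\ux}\le 1$, and your differentiation of the $j=2$ equation justifies the boundedness of $D^2v_2$ up to $\ux$ (from $s_2(\ux)>0$), which the paper only asserts. The remaining differences are minor: at $x=\ux+\D x$ the paper bounds $\D^-v_1-Dv_1$ via concavity and the square-root expansion \cref{Eq: Dv1 ux} rather than by integrating $(y-\ux)^{-1/2}$, and your caveat about backward cells leaving $(\ux,\eta)$ is unnecessary, since $[x-\D x,x]\subset[\ux,\eta)$ for every grid point with $\ux<x<\eta$.
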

\begin{proof}
In the proof, the constant $C$ may change from line to line but is always independent of $\D x$. From \cref{Prop: v sol}, we know that $v_j\in W^{2,\infty}_{loc}(\ux,\bar{x}]$. Since $s_2(\ux)>0$,  $D^2 v_2(\ux)$ is bounded. This implies $D^2 v_2\in L^\infty(\ux,\bar{x})$ and there exists $C_2$ (independent of $\D x$) such that 
$$
\begin{aligned}
\max\lc\max_{x\in \mathcal{G}^{\D x},\,x>\ux}\left\{\D^-{v_{2}(x)}-Dv_{2}(x)\right\},\,\max_{x\in \mathcal{G}^{\D x},\,x<\bar{x}}\left\{Dv_{2}(x)-\D^+{v_{2}(x)}\right\}\rc\leq C_2\Dx.
\end{aligned}
$$
To prove \cref{Eq: FD boundDx} and \cref{Eq: s1FD boundDx}, we consider two cases: $x=\ux+\D x$ and $\ux+\D x<x<\eta$.\par
Case 1:  $x=\ux+\D x$. From the strict concavity of $v_1$, 
$$
0<\D^-{v_{1}(x+\D x)}-Dv_{1}(x+\D x)<Dv_{1}(x)-Dv_{1}(x+\D x)\leq C\sqrt{\D x},
$$ 
where for the last inequality we used \cref{Eq: Dv1 ux}. From \cref{Eq: -s1 bound}, we know $0<-s_1(x+\D x)\leq C\sqrt{\D x}$, hence
$
-s_1(x+\D x)\lc\D^-{v_{1}(x+\D x)}-Dv_{1}(x+\D x)\rc \leq C\D x.
$\par
 Case 2: $\ux+2\D x\leq x< \eta$. By using a Taylor expansion with integral remainder, we deduce
$$
\D^-{v_{1}(x)}-Dv_{1}(x)=-\frac{1}{\D x}\int_{x-\D x}^{x}(z-(x-\D x))D^2v_1(z)dz.
$$
From $x-\D x\leq z<\eta$ and \cref{Prop: D2v1 ux}, we deduce that $0<-D^2v_1(z)<\frac{C}{\sqrt{z-\ux}}\leq \frac{C}{\sqrt{x-\D x-\ux}}\leq \frac{C}{\sqrt{\D x}}$, where for the last inequality we used $x\geq \ux+2\D x$. Hence,  
$$
\D^-{v_{1}(x)}-Dv_{1}(x)\leq \frac{C}{(\D x)^{1/2}}\frac{1}{\D x}\int_{x-\D x}^{x}(z-(x-\D x))dz\leq C(\D x)^{1/2}.
$$
Since $0<-s_1(x)\leq C\sqrt{x-\ux}$ for $\ux+2\D x\leq x< \eta$, 
$$
\begin{aligned}
-s_1(x)\lc\D^-{v_{1}(x)}-Dv_{1}(x)\rc={}&\frac{-s_1(x)}{\D x}\int_{x-\D x}^{x}(z-(x-\D x))\lc-D^2v_1(z)\rc dz\\
\leq {}&\frac{C\sqrt{x-\ux}}{\sqrt{x-\D x-\ux}}\frac{1}{\D x}\int_{x-\D x}^{x}(z-(x-\D x))dz\\
\leq {}&\frac{C(\sqrt{x-\D x-\ux}+\sqrt{\D x})}{\sqrt{x-\D x-\ux}}\D x\leq C\D x.
\end{aligned}
$$
For the last inequality, we observe that $\frac{\sqrt{\D x}}{\sqrt{x-\D x-\ux}}\leq 1$ since $x\geq \ux+2\D x$. 

\end{proof}
\begin{theorem}\label{Thm: convergence rate}
We make the same assumptions as in \cref{Prop: saving}, and $\bar{x}\in \mathcal{G}^{\D x}$ is sufficiently large such that $s_2(\bar{x})<0$. Then for sufficiently small $\D x$, there exists a constant $C>0$, independent of $\D x$, such that
\beq\label{Eq: error}
\max_{j,\,x\in \mathcal{G}^{\D x}} \left\vert v_j(x)-V_{j}(x)\right\vert \leq C\D x.
\eeq
\end{theorem}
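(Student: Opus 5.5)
The plan is to use the discrete comparison principle (\cref{Prop: comparison}) together with explicit perturbations of the restriction of $v$ to the grid. Concretely, I will build a discrete subsolution $v-C\D x$ and a discrete supersolution $v+C\D x$ of \cref{Eq: D-HJB H}. Here $v_j$ is evaluated at $x_i$, and possibly a correction $\pm C\D x\,\phi_j(x)$ is added with a bounded $\phi$. Comparison then gives $v-C\D x\preceq V\preceq v+C\D x$.

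Strong regularity of $v$ lets us avoid doubling variables. Since $v_j\in C^1$ and is strictly concave, \cref{Prop: saving} gives $Dv_j>0$ on $[\ux,\bar x]$. The equation holds pointwise in the form
\[
\frac{\rho}{\theta}v_j=s_j Dv_j+\mathcal F(c_j,v_j)+\lambda_j(v_{\bar\jmath}-v_j),
\]
with $c_j$ the optimal consumption.

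\emph{Subsolution side.} The discrete Hamiltonian dominates any fixed admissible control, by \cref{Eq: H F in p}. So at each node I insert the continuous optimal control $c_j(x_i)$ into the forward part if $s_j(x_i)\ge0$, and into the backward part if $s_j(x_i)<0$; the other part gets $\bar c_j$. This yields
\[
\boldsymbol H\big(x_i,y_j,v_j(x_i),\D^+v_j,\D^-v_j\big)\ge H\big(x_i,y_j,v_j,Dv_j\big)+s_j^+(\D^+v_j-Dv_j)-s_j^-(\D^-v_j-Dv_j).
\]
By concavity $\D^+v_j\le Dv_j\le\D^-v_j$, so the error terms are $\ge -|s_j|\,|{\rm FD}-Dv_j|$.

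At $\ux$, the state constraint for $j=1$ gives $s_1(\ux)=0$, so there is no error. For $j=2$, $s_2(\ux)>0$ uses the forward difference, which is admissible at $i=0$. At $\bar x$, $s_2(\bar x)<0$ and $s_1<0$, so backward differences are used, consistent with $\boldsymbol H=H^\downarrow$. \cref{Lemma: error FD} bounds the consistency errors by $C\D x$. For $j=2$ this comes from \cref{Eq: FD erro v2}. For $j=1$ near $\ux$ it comes from the weighted estimate \cref{Eq: s1FD boundDx}, which is exactly where $|s_1|$ compensates the blow-up $D^2v_1\to-\infty$. Away from $\ux$, $W^{2,\infty}_{loc}$ gives $O(\D x)$.

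Next, subtract a constant: $\mathsf U=v-K\D x$. By \cref{Lemma: num H_V} with $\theta\ge1$, lowering the value raises $\boldsymbol H$, and the $\lambda$-terms cancel. Hence $\frac{\rho}{\theta}\mathsf U_{i,j}\le\boldsymbol H+\lambda_j(\cdot)$ as soon as $\frac{\rho}{\theta}K\ge C$. Therefore $\mathsf U\preceq V$.

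\emph{Supersolution side.} This is the main obstacle. We need
\[
\boldsymbol H\big(x_i,y_j,v_j,\D^+v_j,\D^-v_j\big)\le H(x_i,y_j,v_j,Dv_j)+C\D x,
\]
but $\boldsymbol H$ takes the sup over controls in both directions. I plan to use Lemma \ref{Lemma: H monotone}: $\boldsymbol H$ is increasing in $p^F$ and decreasing in $p^B$. Concavity gives $\D^+v_j\le Dv_j\le \D^-v_j$, so
\[
\boldsymbol H(\dots,\D^+v_j,\D^-v_j)\le\boldsymbol H(\dots,Dv_j,Dv_j)=H(x_i,y_j,v_j,Dv_j)
\]
in the interior, exactly as in the proof of \cref{Prop:sub-super disc}. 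Thus $v$ restricted to the grid is itself a discrete supersolution at interior nodes and at $\bar x$, where $H^\downarrow\le H$.

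The delicate node is $\ux$, where $\boldsymbol H=H^\uparrow(\D^+v_j)\le H^\uparrow(Dv_j)$. For $j=1$, $Dv_1(\ux)=p_{\min}$, so $H^\uparrow=H_{\min}\le H$. For $j=2$, $H^\uparrow(Dv_2)=H(Dv_2)$ because $s_2(\ux)>0$. In both cases the inequality holds with no loss. If some node fails by $O(\D x)$, for instance due to the truncation $1/\eps$ (which is inactive for small $\D x$ by \cref{Cor: Dv positive}), I will add $+K\D x$ and invoke \cref{Lemma: num H_V} symmetrically. Comparison then yields $V\preceq v+K\D x$, and combining both sides gives \cref{Eq: error}.
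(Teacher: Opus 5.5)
Your proposal is correct and follows essentially the paper's route. The paper's argument at the maximum point of $v-V$ is the discrete comparison principle written out inline, and it uses the same ingredients as yours: monotonicity of $\boldsymbol{H}$ in $(\boldsymbol{p}^F,\boldsymbol{p}^B)$ and in $V$ for $\theta\ge1$, concavity of $v_j$, and the weighted estimate \cref{Eq: s1FD boundDx} near $\ux$.

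Two of your refinements go slightly beyond the paper. First, you insert the continuous optimal control, which amounts to the supporting-line inequality for $H$ convex in $p$, instead of the paper's mean-value expansion; this removes the quadratic remainder, so you do not need \cref{Eq: FD boundDx}. Second, your supersolution step spells out the reverse bound that the paper omits, and shows it holds with no loss, namely $V\preceq v$.
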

\begin{proof} In this proof, $C$ denotes a generic positive constant that may change from line to line. \par
{\it{Step 1.}}
Let us define 
\beq\label{Eq: def sigma}
\sigma=\max_{j,\, x\in \mathcal{G}^{\D x}} \{v_j(x)-V_{j}(x)\}=v_{j^*}(x^*)-V_{j^*}(x^*),
\eeq
and suppose $\sigma>0$. Since $v_j\in C^1(\ux,+\infty)$, we know that if $x^*>\ux$ then
\beq\label{Eq: v(x) sub}
\frac{\rho}{\theta} v_{j^*}(x^*)=
	H\lc x^*,y_{j^*},v_{j^*}(x^*),Dv_{j^*}(x^*)\rc+\lambda_{j^*}(v_{\bar \jmath^*}(x^*)-v_{j^*}(x^*)).
\eeq
Since $Dv_j$ is uniformly continuous in $[\ux,+\infty)$, \cref{Eq: v(x) sub} holds if $x^*=\ux$. \par 
Since 
$
V_{j^*}(x^*+\D x)-V_{j^*}(x^*)\geq v_{j^*}(x^*+\D x)-v_{j^*}(x^*)
$
if $\ux\leq x^*<\bar{x}$, $\D^+{V_{j^*}(x^*)}\geq \D^+{v_{j^*}(x^*)}$ if $\ux\leq x^*<\bar{x}$. 
Similarly, $\D^-{V_{j^*}(x^*)}\leq \D^-{v_{j^*}(x^*)}$ if $\ux< x^*\leq \bar{x}$. From \cref{Lemma: H monotone},
\beq\label{Eq: HVHv}
\begin{aligned}
&\boldsymbol{H}\lc x^*,y_{j^*},V_{j^*}(x^*),\D^+{V_{j^*}(x^*)},\D^-{V_{j^*}(x^*)}\rc\\
\geq {}&\boldsymbol{H}\lc x^*,y_{j^*},V_{j^*}(x^*),\D^+{v_{j^*}(x^*)},\D^-{v_{j^*}(x^*)}\rc\\
\geq {}&\boldsymbol{H}\lc x^*,y_{j^*},v_{j^*}(x^*),\D^+{v_{j^*}(x^*)},\D^-{v_{j^*}(x^*)}\rc.
\end{aligned}
\eeq
For the last inequality, we used $\sigma>0$ and \cref{Lemma: num H_V}. Observe that if $x^*=\ux$ or $x^*=\bar{x}$, \cref{Eq: HVHv} holds with $\boldsymbol{H}$ given by \cref{Eq: H num}. We infer from \cref{Eq: D-HJB H} and \cref{Eq: HVHv} that
\beq\label{Eq: V(z) super}
\frac{\rho}{\theta}V_{j^*}(x^*)\geq \boldsymbol{H}\lc x^*,y_{j^*},v_{j^*}(x^*),\D^+{v_{j^*}(x^*)},\D^-{v_{j^*}(x^*)}\rc+\lambda_{j^*}(V_{\bar \jmath^*}(x^*)-V_{j^*}(x^*)).
\eeq
From \cref{Eq: def sigma}, $v_{j^*}(x^*)-V_{j^*}(x^*)\geq v_{\bar \jmath^*}(x^*)-V_{\bar \jmath^*}(x^*)$. Therefore,  by subtracting  \cref{Eq: v(x) sub} from \cref{Eq: V(z) super} we obtain 
\beq\label{Eq: v-V}
\begin{aligned}
&\frac{\rho}{\theta}\lc v_{j^*}(x^*)-V_{j^*}(x^*)\rc \\
\leq {}& H\lc x^*,y_{j^*},v_{j^*}(x^*),Dv_{j^*}(x^*)\rc-\boldsymbol{H}\lc x^*,y_{j^*},v_{j^*}(x^*),\D^+{v_{j^*}(x^*)},\D^-{v_{j^*}(x^*)}\rc.
\end{aligned}
\eeq
If $j^*=2$, then from \cref{Eq: FD erro v2} and the consistency of the numerical Hamiltonian $\boldsymbol{H}$, it follows $\frac{\rho}{\theta}\lc v_{2}(x^*)-V_{2}(x^*)\rc\leq C\D x$.\par 
We now estimate the right hand side of \cref{Eq: v-V}, with $j^*=1$, by making out three cases. \par 
{\fbox{\it{Case 1.}}} $\ux<x^*<\eta$, where $\eta$ has been introduced in \cref{Lemma: error FD}.
Since $s_1(x^*)<0$, we know $Dv_{1}(x^*)<p_{\min}\lc x^*,y_{1},v_{1}(x^*)\rc$, and $\D^-{v_{1}(x^*)}<p_{\min}\lc x^*,y_{1},v_{1}(x^*)\rc$ follows from the continuity of $Dv_{1}$, and 
$$
\begin{aligned}
&H\lc x^*,y_{1},v_{1}(x^*),Dv_{1}(x^*)\rc-\boldsymbol{H}\lc x^*,y_{1},v_{1}(x^*),\D^+{v_{1}(x^*)},\D^-{v_{1}(x^*)}\rc \\
= {}& H\lc x^*,y_{1},v_{1}(x^*),Dv_{1}(x^*)\rc-H\lc x^*,y_{1},v_{1}(x^*),\D^-{v_{1}(x^*)}\rc\\
\leq {}& (rx^*+y_{1})\lc Dv_{1}(x^*)-\D^-{v_{1}(x^*)}\rc\\
+&\underbrace{\frac{\rho^{\p}\lc Dv_{1}(x^*)\rc^{1-\p}}{\p-1} ((1-\g){v_{1}(x^*)})^{\frac{1-\g \p}{1-\g}}-\frac{\rho^{\p} \lc \D^-{v_{1}(x^*)}\rc^{1-\p}}{\p-1}((1-\g){v_{1}(x^*)})^{\frac{1-\g \p}{1-\g}}}_{(I)}
\end{aligned}
$$
From the mean value theorem, there exists $\xi\in (0,1)$ such that 
$$
\begin{aligned}
&(I)\\
={}&\lc-\rho^{\p}\lc \xi Dv_{1}(x^*)+(1-\xi)\D^-{v_{1}(x^*)}\rc^{-\psi}\rc ((1-\g){v_{1}(x^*)})^{\frac{1-\g \p}{1-\g}}\cdot\\
&\lc Dv_{1}(x^*)-\D^-{v_{1}(x^*)}\rc\\
={}& -\rho^{\p}\lc Dv_{1}(x^*)\rc^{-\psi}((1-\g){v_{1}(x^*)})^{\frac{1-\g \p}{1-\g}} \lc Dv_{1}(x^*)-\D^-{v_{1}(x^*)}\rc\\
&+\rho^{\p}\underbrace{\lc\lc \xi Dv_{1}(x^*)+(1-\xi)\D^-{v_{1}(x^*)}\rc^{-\psi}-\lc Dv_{1}(x^*)\rc^{-\psi}\rc}_{(II)}\underbrace{((1-\g){v_{1}(x^*)})^{\frac{1-\g \p}{1-\g}}}_{III}\cdot\\
&\lc \D^-{v_{1}(x^*)}-Dv_{1}(x^*)\rc\\
\leq {}& c_1(x^*)\lc \D^-{v_{1}(x^*)}-Dv_{1}(x^*)\rc\\
&+\rho^{\p}\lc Dv_{1}(\bar{x})\rc^{-\psi-1}\lc b\lc \bar{x}+\frac{y_2}{r}\rc\rc^{1-\g \p}\lc \D^-{v_{1}(x^*)}-Dv_{1}(x^*)\rc^2,
\end{aligned}
$$
where we used \cref{c optimal} for the last inequality. To bound $(II)$ for the last inequality, we infer from the concavity of $v_1$ that $\D^-{v_{1}(x^*)}>Dv_{1}(x^*)>Dv_{1}(\bar{x})>0$, hence  
$$
(II)\leq \psi \lc Dv_{1}(\bar{x})\rc^{-\psi-1}\lc \D^-{v_{1}(x^*)}-Dv_{1}(x^*)\rc.
$$
For $(III)$, we have used \cref{Prop:sub-super} and the comparison principle. \par
Since $s_1(x^*)=rx^*+y_1-c_1(x^*)$,
$$
\begin{aligned}
&H\lc x^*,y_{1},v_{1}(x^*),Dv_{1}(x^*)\rc-\boldsymbol{H}\lc x^*,y_{1},v_{1}(x^*),\D^+{v_{1}(x^*)},\D^-{v_{1}(x^*)}\rc \\
\leq {}&-s_1(x^*)\lc \D^-{v_{1}(x^*)}-Dv_{1}(x^*)\rc+C\lc \D^-{v_{1}(x^*)}-Dv_{1}(x^*)\rc^2 \leq C\D x,
\end{aligned}
$$
where for the last inequality we used \cref{Eq: FD boundDx} and \cref{Eq: s1FD boundDx} in \cref{Lemma: error FD}.\par
\noindent \fbox{\it{Case 2.}} $x^*=\ux$. Since $s_1(\ux)=0$, $Dv_{1}(\ux)=p_{\min}\lc \ux,y_{1},v_{1}(\ux)\rc$ and $\D^+v_{1}(\ux)<p_{\min}\lc \ux,y_{1},v_{1}(\ux)\rc$, hence 
$
H\lc \ux,y_{1},v_{1}(\ux),Dv_{1}(\ux)\rc-{H}^{\uparrow}\lc \ux,y_{1},V_{1}(\ux),\D^+{v_{1}(\ux)}\rc=0.
$\par 

\noindent  \fbox{\it{Case 3.}} $x^*\in [\eta,\bar{x}]$. Since $s_{1}(x^*)<0$, 
$$
\begin{aligned}
&H\lc x^*,y_{1},v_{1}(x^*),Dv_{1}(x^*)\rc-\boldsymbol{H}\lc x^*,y_{1},v_{1}(x^*),\D^+{v_{1}(x^*)},\D^-{v_{1}(x^*)}\rc \\
= {}& H\lc x^*,y_{1},v_{1}(x^*),Dv_{1}(x^*)\rc-H\lc x^*,y_{1},v_{1}(x^*),\D^-{v_{1}(x^*)}\rc \leq C\D x
\end{aligned}
$$
For the last inequality, we used $v_1\in W^{2,\infty}(\eta,\bar{x})$.\par 
Since we argue with sufficiently small $\D x$, we may assume $\D x<1$ without loss of generality. For all the three cases above, we can use \cref{Eq: v-V} and finally obtain that
$
\sigma\leq  \frac{C\theta}{\rho} \D x$.\par

{\it{Step 2.}} Now we reverse the direction of the estimate and define
$$
\sigma_2=\sup_{j,\,\,x\in \mathcal{G}^{\D x}} \{V_j(x)-v_{j}(x)\}.
$$
The proof that $\sigma_2\leq  C \D x$ is similar and we omit the details. 
\end{proof}

\section{Solution method in the case $0<\theta<1$}\label{Sec: early}
Throughout this section we make the standing assumption $0<\theta<1$.
Consider the map $(V_1,V_2)=\Gamma_{\eps}(\widetilde{V}_1,\widetilde{V}_2)$ defined by
\beq\label{Eq: D-HJB Fix}
\left\{
      \begin{aligned}
\quad &\frac{\rho}{\theta} V_{i,1}
=\boldsymbol{H}_{\eps}\lc x_i,y_1,\widetilde{V}_{i,1},\D^+{V_{i,1}},\D^-{V_{i,1}}\rc
+\lambda_1\lc V_{i,2}-V_{i,1} \rc,\\
&\frac{\rho}{\theta} V_{i,2}
=\boldsymbol{H}_{\eps}\lc x_i,y_2,\widetilde{V}_{i,2},\D^+{V_{i,2}},\D^-{V_{i,2}}\rc
+\lambda_2\lc V_{i,1}-V_{i,2} \rc.
     \end{aligned}
   \right.
\eeq
Any fixed point of $\Gamma_{\eps}$ is a solution to the regularized HJB equation \cref{Eq: D-HJB HR}.
The following comparison principle is the discrete form of \cref{Prop: comparison fixed}. 
\begin{proposition}\label{Prop: comparison eps}
Assume $\check{U}\preceq \widetilde{V}\preceq \check{V}$. Let ${U}$ and ${V}$ be respectively a sub- and supersolution of \cref{Eq: D-HJB Fix}, then ${U}\preceq {V}$.
\end{proposition}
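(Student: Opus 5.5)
The argument is a discrete maximum principle, patterned on the proof of \cref{Prop: comparison}. It is in fact simpler, because in \cref{Eq: D-HJB Fix} the value argument of $\boldsymbol{H}_{\eps}$ is frozen at $\widetilde{V}_{i,j}$. It is the same for the sub- and the supersolution, so no monotonicity in $V$ (and hence no sign condition on $\theta$) is needed. The hypothesis $\check{U}\preceq \widetilde{V}\preceq \check{V}$ is used only to guarantee that $\widetilde{V}_{i,j}<0$. Then $\lc(1-\g)\widetilde{V}_{i,j}\rc^{\frac{1-\g \p}{1-\g}}$ is well defined and bounded, the regularized consumptions in \cref{Eq: CF CB proj} are well defined, and $\boldsymbol{H}_{\eps}$ is a finite quantity.

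Write $\boldsymbol{H}_{\eps}$ in its sup form: $H^{\uparrow}_{\eps}+H^{\downarrow}_{\eps}-H_{\min}$ as in \cref{Eq: H from F eps}, with the value argument $\widetilde{V}_{i,j}$. The sup in $H^{\downarrow}_{\eps}$ runs over $c\in[rx_i+y_j,1/\eps]$, so by \cref{Eq: eps lower} it is a sup of affine functions of $\boldsymbol{p}^{B}$ with nonpositive slopes $s^B$. The sup in $H^{\uparrow}_{\eps}$ runs over $c\in[0,rx_i+y_j]$ and gives nonnegative slopes $s^F$. Hence, exactly as in \cref{Lemma: H monotone}, $\boldsymbol{H}_{\eps}(x_i,y_j,\widetilde{V}_{i,j},\cdot,\cdot)$ is nondecreasing in $\boldsymbol{p}^{F}$ and nonincreasing in $\boldsymbol{p}^{B}$. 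Unlike $\boldsymbol{H}$, it is finite for every real $\boldsymbol{p}^{B}$. At $i=0$ and $i=I$ only one of the two terms is present, which is consistent with the boundary conventions of \cref{Eq: CF CB proj}.

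Now argue by contradiction. Suppose
$$\delta:=\max_{i,j}\{U_{i,j}-V_{i,j}\}=U_{i^*,j^*}-V_{i^*,j^*}>0.$$
At the maximizing node we have $\D^+U_{i^*,j^*}\le \D^+V_{i^*,j^*}$ when $i^*<I$, and $\D^-U_{i^*,j^*}\ge \D^-V_{i^*,j^*}$ when $i^*>0$. Monotonicity in the momenta then gives
$$\boldsymbol{H}_{\eps}(x_{i^*},y_{j^*},\widetilde{V}_{i^*,j^*},\D^+U_{i^*,j^*},\D^-U_{i^*,j^*})\le \boldsymbol{H}_{\eps}(x_{i^*},y_{j^*},\widetilde{V}_{i^*,j^*},\D^+V_{i^*,j^*},\D^-V_{i^*,j^*}).$$
Maximality of $\delta$ also gives $U_{i^*,\bar\jmath^*}-U_{i^*,j^*}\le V_{i^*,\bar\jmath^*}-V_{i^*,j^*}$, so the coupling term for $U$ is at most that for $V$. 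Subtract the supersolution inequality for $V$ at $(i^*,j^*)$ from the subsolution inequality for $U$ there. The Hamiltonian terms and the coupling terms both contribute nonpositive amounts, so $\frac{\rho}{\theta}\delta\le 0$, a contradiction. Hence $U\preceq V$.

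The only step needing care is the boundary nodes and the regularized momentum monotonicity. I would check the $\min\{1/\eps,\cdot\}$ truncation and the use of $(\cdot)_+$ by verifying directly that the formula \cref{Eq: HR F} coincides with the sup over the truncated control sets, as in \cref{Lemma: c max}. Everything else is routine.
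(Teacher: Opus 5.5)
Your proof is correct and follows the same discrete maximum-principle argument as the paper. Both take the node where $U-V$ attains its positive maximum $\delta$, order the one-sided differences there, use the momentum monotonicity of $\boldsymbol{H}_{\eps}$ with the frozen argument $\widetilde{V}$, compare the coupling terms, and subtract the sub- and supersolution inequalities to get $\rho\delta/\theta\le 0$. You also check that $\boldsymbol{H}_{\eps}$ is a sup over the truncated control sets, so the monotonicity lemma (stated for $\boldsymbol{H}$) really applies to $\boldsymbol{H}_{\eps}$, including when $\D^-V_{i,j}\le 0$; the paper leaves this implicit.
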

\begin{proof}
We argue by contradiction. Suppose 
$
{U}_{i^*,j^*}-{V}_{i^*,j^*}=\max_{i,j}\{{U}_{i,j}-{V}_{i,j}\}=\delta>0.
$
We denote $\bar \jmath^*=3-j^*$ and observe
$
{U}_{i^*,j^*}-{V}_{i^*,j^*}\geq {U}_{i^*,\bar \jmath^*}-{V}_{i^*,\bar \jmath^*},
$
hence
\beq\label{Eq: l(U-V)}
\lambda_{j^*}({U}_{i^*,\bar \jmath^*}-{U}_{i^*,j^*})\leq \lambda_{j^*}({V}_{i^*,\bar \jmath^*}-{V}_{i^*,j^*}).
\eeq
Since 
$
{U}_{i^*+1,j^*}-{V}_{i^*+1,j^*}\leq {U}_{i^*,j^*}-{V}_{i^*,j^*},\quad {U}_{i^*-1,j^*}-{V}_{i^*-1,j}\leq {U}_{i^*,j^*}-{V}_{i^*,j^*},
$
we infer that
$
\D^+{{V}_{i^*,j^*}}\geq \D^+{{U}_{i^*,j^*}}$, $\D^-{{V}_{i^*,j^*}}\leq \D^-{{U}_{i^*,j^*}}$. From \cref{Lemma: H monotone} we deduce 
\beq\label{Eq: HU HV0}
\begin{aligned}
&\boldsymbol{H}_{\eps}\lc x_{i^*},y_{j^*},\widetilde{V}_{i^*,j^*},\D^+{{U}_{i^*,j^*}},\D^-{{U}_{i^*,j^*}}\rc\\
\leq {}&\boldsymbol{H}_{\eps}\lc x_{i^*},y_{j^*},\widetilde{V}_{i^*,j^*},\D^+{{V}_{i^*,j^*}},\D^-{{V}_{i^*,j^*}}\rc.
\end{aligned}
\eeq
and 
\beq\label{Eq: compare U}
\frac{\rho}{\theta} {U}_{i^*,j^*}\leq \boldsymbol{H}_{\eps}\lc x_i,y_j,\widetilde{V}_{i^*,j^*},\D^+{{V}_{i^*,j^*}},\D^-{{V}_{i^*,j^*}}\rc+\lambda_{j^*}({U}_{i^*,\bar \jmath^*}-{U}_{i^*,j^*}).
\eeq
Since ${V}$ is a supersolution,
\beq\label{Eq: compare V}
\frac{\rho}{\theta}{V}_{i^*,j^*}\geq \boldsymbol{H}_{\eps}\lc x_i,y_j,\widetilde{V}_{i^*,j^*},\D^+{{V}_{i^*,j^*}},\D^-{{V}_{i^*,j^*}}\rc+\lambda_{j^*}({V}_{i^*,\bar \jmath^*}-{V}_{i^*,j^*}).
\eeq
Subtracting \cref{Eq: compare V} from \cref{Eq: compare U} and using \cref{Eq: l(U-V)}, we get $\rho \delta/\theta\leq 0$, a contradiction. 
\end{proof}

We observe that, since the value $\widetilde{V}_{i,j}$ is fixed in \cref{Eq: D-HJB Fix}, \cref{Prop: comparison eps} holds for all $\theta>0$. A major difficulty in the case $\theta<1$ is that \cref{Prop: comparison} may not hold. Therefore, we cannot directly use the comparison principle in \cref{Prop: comparison} and obtain $\check{U}\preceq {V}\preceq \check{V}$, for all solution $V$ to \cref{Eq: D-HJB H}, as in the case $\theta\geq 1$. However, the following results will allow us to look for a solution $V$ such that $\check{U}\preceq {V}\preceq \check{V}$. 
\begin{lemma}\label{Lemma: early barrier}
We have the barrier properties
\begin{itemize}
\item[(i).] If $({U}_1,{U}_2)=\Gamma_{\eps}(\check{U}_1,\check{U}_2)$, then
$
{U}\succeq \check{U}.
$
\item[(ii).] If $({V}_1,{V}_2)=\Gamma_{\eps}(\check{V}_1,\check{V}_2)$, then
$
{V}\preceq \check{V}.
$
\end{itemize}
\end{lemma}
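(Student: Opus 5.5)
The plan is to compare $U=\Gamma_{\eps}(\check U)$ (resp. $V=\Gamma_{\eps}(\check V)$) with the barrier $\check U$ (resp. $\check V$) using the frozen-coefficient comparison principle \cref{Prop: comparison eps}. Its hypothesis $\check U\preceq \widetilde V\preceq \check V$ is trivially satisfied when $\widetilde V=\check U$ or $\widetilde V=\check V$, since \cref{Prop:sub-super disc} gives $\check U\preceq\check V$. Indeed, $\frac{(rx+y_1)^{1-\g}}{1-\g}\le \frac{(b(x+y_2/r))^{1-\g}}{1-\g}$ because $b(x+y_2/r)\ge rx+y_1$ when $b>r$. So it suffices to check two things: $\check U$ is a discrete subsolution of \cref{Eq: D-HJB Fix} with $\widetilde V=\check U$, and $\check V$ is a discrete supersolution of \cref{Eq: D-HJB Fix} with $\widetilde V=\check V$.

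The key observation is that when the frozen argument coincides with the unknown, the system \cref{Eq: D-HJB Fix} evaluated at $\check U$ is literally the regularized equation \cref{Eq: D-HJB HR} evaluated at $\check U$. Indeed, $\boldsymbol{H}_{\eps}\lc x_i,y_j,\check U_{i,j},\D^+\check U_{i,j},\D^-\check U_{i,j}\rc$ appears on both sides of the comparison. The sub/supersolution inequalities for \cref{Eq: D-HJB Fix} with $\widetilde V=\check U$ (resp. $\check V$) therefore reduce exactly to the statement that $\check U$ (resp. $\check V$) is a sub- (resp. super-) solution of \cref{Eq: D-HJB HR}, which is \cref{Prop: sub super HR2}.

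For (i), I would verify this directly to be safe. The control choice $c^F_{i,j}=c^B_{i,j}=\bar c_j(x_i)$ is admissible for $\boldsymbol{H}_{\eps}$ because $1/\eps>r\bar x+y_2$ by \cref{Eq: eps lower}. With this choice, $\boldsymbol{H}_{\eps}\ge \mathcal F(\bar c_j(x_i),\check U_{i,j})$. This quantity equals $\frac{\rho}{\theta}\check U_{i,1}$ for $j=1$, and is at least as large for $j=2$ since $\mathcal F$ is increasing in $c$. The coupling terms vanish because $\check U_{i,1}=\check U_{i,2}$.

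For (ii), I would use that $\boldsymbol{H}_{\eps}\le \boldsymbol{H}$ (\cref{Prop: sub super HR1}) together with the supersolution property of $\check V$ for \cref{Eq: D-HJB H}. That property follows from the strict concavity of $\check{\sv}_2$, the monotonicity \cref{Lemma: H monotone}, and the monotonicity in $y$ for $j=1$, exactly as in \cref{Prop:sub-super disc}.

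Having verified these inequalities, \cref{Prop: comparison eps} applied with $\widetilde V=\check U$ yields $\check U\preceq U$. Applied with $\widetilde V=\check V$, it yields $V\preceq \check V$.

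The main point requiring care, rather than real difficulty, concerns the definition of $\boldsymbol{H}_{\eps}$. It is written through the optimal policies \cref{Eq: CF CB proj} computed from the frozen $\widetilde V$ and the differences of the unknown. I must make sure that $\boldsymbol{H}_{\eps}$, interpreted as $H^\uparrow_\eps+H^\downarrow_\eps-H_{\min}$ via \cref{Eq: H from F eps}, is a supremum over admissible controls. That lets the lower bound in (i) use any admissible control, and keeps the monotonicity in $(\boldsymbol p^F,\boldsymbol p^B)$ used inside \cref{Prop: comparison eps} valid. The boundary nodes $i=0,I$ are handled identically, since there only $H^\uparrow$ or $H^\downarrow$ appears and the zero-saving control remains admissible.
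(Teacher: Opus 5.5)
Your proposal is correct and follows the paper's proof. With the frozen argument equal to the barrier, the frozen system evaluated at $\check U$ (resp. $\check V$) coincides with \cref{Eq: D-HJB HR}, so \cref{Prop: sub super HR2} gives the sub/supersolution property and \cref{Prop: comparison eps} yields $U\succeq\check U$ and $V\preceq\check V$. Your extra checks are sound refinements of details the paper leaves implicit: the admissibility of the zero-saving control under \cref{Eq: eps lower}, and the ordering $\check U\preceq\check V$, which actually follows from $(b-r)(x+y_2/r)+y_2-y_1>0$ rather than from \cref{Prop:sub-super disc} itself.
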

\begin{proof}
(i). If $\widetilde{V}_{i,j}=\check{U}_{i,j}$, $(\check{U}_1,\check{U}_2)$ itself is a subsolution to the HJB equation \cref{Eq: D-HJB Fix}. From \cref{Prop: comparison eps}, ${U}_{i,j}\geq \check{U}_{i,j}$. \par
(ii). Similarly, if $\widetilde{V}_{i,j}=\check{V}_{i,j}$ then $(\check{V}_1,\check{V}_2)$ itself is a supersolution to the HJB equation \cref{Eq: D-HJB Fix}. From \cref{Prop: comparison eps}, ${V}_{i,j}\leq \check{V}_{i,j}$. 
\end{proof}

\begin{proposition}\label{Lemma: monotone fp}
The map $\Gamma_{\eps}$ is monotone: if $(U_1,U_2)=\Gamma_{\eps}(\widetilde{U}_1,\widetilde{U}_2)$, $(V_1,V_2)=\Gamma_{\eps}(\widetilde{V}_1,\widetilde{V}_2)$ and $\check{U}\preceq \widetilde{U}\preceq \widetilde{V}\preceq \check{V}$, then ${U}\preceq {V}$.
\end{proposition}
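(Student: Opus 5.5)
The plan mirrors the continuous argument in \cref{Prop: ordering}: show that $U$ is a discrete subsolution of the system \cref{Eq: D-HJB Fix} with frozen data $\widetilde{V}$, and then apply \cref{Prop: comparison eps}. That proposition only requires $\check{U}\preceq\widetilde{V}\preceq\check{V}$, which is part of the hypotheses.

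\emph{Step 1: monotonicity of $\boldsymbol{H}_{\eps}$ in the frozen variable.} For every node $(i,j)$ we want
$$\boldsymbol{H}_{\eps}\lc x_i,y_j,\widetilde{U}_{i,j},\D^+U_{i,j},\D^-U_{i,j}\rc\leq \boldsymbol{H}_{\eps}\lc x_i,y_j,\widetilde{V}_{i,j},\D^+U_{i,j},\D^-U_{i,j}\rc .$$
When $\widetilde{U}_{i,j}<\widetilde{V}_{i,j}$, this is \cref{Lemma: num H_Veps} in the case $0<\theta<1$. When $\widetilde{U}_{i,j}=\widetilde{V}_{i,j}$, it is an equality. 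Both values are negative because $\widetilde{V}\preceq\check{V}\prec 0$, so the lemma applies.

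One point needs care. $\boldsymbol{H}_{\eps}$ is defined through the policies $c^{F,*},c^{B,*}$ in \cref{Eq: CF CB proj}, and these depend on the $V$-argument. The cleanest reading is the sup formulation:
\begin{itemize}
\item $H^\uparrow_\eps$ is a supremum over $c\in[0,\bar c_j(x_i)]$;
\item $H^\downarrow_\eps$ is a supremum over $c\in[\bar c_j(x_i),1/\eps]$, which is nonempty by \cref{Eq: eps lower};
\item then subtract $\mathcal{F}(\bar c_j(x_i),\cdot)$.
\end{itemize}
With this reading, the monotonicity proof of \cref{Lemma: num H_V} carries over. There are two ingredients:
\begin{itemize}
\item $\mathcal{F}(c,\cdot)$ is increasing by \cref{F monotone}, which handles the backward supremum.
\item For $c\le\bar c_j(x_i)$, the difference $\mathcal{F}(c,v)-\mathcal{F}(\bar c_j(x_i),v)$ is monotone in $v$ in the right direction. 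This follows from the explicit product form of \cref{EZ flow}.
\end{itemize}
At $i=0$ and $i=I$ only one of the two one-sided Hamiltonians is present, and the argument is the same.

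\emph{Step 2: $U$ is a subsolution.} By definition, $U$ satisfies \cref{Eq: D-HJB Fix} with data $\widetilde{U}$. Combining this with Step 1 gives, for all $(i,j)$,
$$\frac{\rho}{\theta}U_{i,j}\le \boldsymbol{H}_{\eps}\lc x_i,y_j,\widetilde{V}_{i,j},\D^+U_{i,j},\D^-U_{i,j}\rc+\lambda_j(U_{i,\bar\jmath}-U_{i,j}).$$
So $U$ is a subsolution of \cref{Eq: D-HJB Fix} with data $\widetilde{V}$, and $V$ is an exact solution, hence a supersolution, of the same system. \cref{Prop: comparison eps} then yields $U\preceq V$.

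\emph{Main obstacle.} The delicate point is Step 1 for the regularized Hamiltonian. The policies depend on the frozen value, and the Hamiltonian contains the subtracted term $-\mathcal{F}(\bar c_j,\cdot)$. I would handle this exactly as in \cref{Lemma: num H_V}: fix the maximizers associated with $\widetilde{U}_{i,j}$, compare term by term, and then bound by the supremum at $\widetilde{V}_{i,j}$.

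A secondary issue is that $\Gamma_\eps$ must be well defined, i.e.\ solutions of \cref{Eq: D-HJB Fix} must exist. The statement implicitly assumes this, and uniqueness already follows from \cref{Prop: comparison eps}.
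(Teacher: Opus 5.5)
Your proposal is correct and follows the paper's proof: you use \cref{Lemma: num H_Veps} to get $\boldsymbol{H}_{\eps}(x_i,y_j,\widetilde{U}_{i,j},\D^+U_{i,j},\D^-U_{i,j})\le \boldsymbol{H}_{\eps}(x_i,y_j,\widetilde{V}_{i,j},\D^+U_{i,j},\D^-U_{i,j})$, deduce that $U$ is a subsolution of the system with frozen data $\widetilde{V}$, and conclude with \cref{Prop: comparison eps}. Your extra checks (the equality case, reading $\boldsymbol{H}_{\eps}$ through its sup formulation, and the sign of $\mathcal{F}(c,\cdot)-\mathcal{F}(\bar c_j(x_i),\cdot)$) only spell out details the paper leaves implicit.
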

\begin{proof}
From $\check{U}\preceq \widetilde{U}\preceq \widetilde{V}\preceq \check{V}$ and \cref{Lemma: num H_Veps}, we know for all $(i,j)$,
$$
\boldsymbol{H}_{\eps}\lc x_{i},y_{j},\widetilde{U}_{i,j},\D^+{{U}_{i,j}},\D^-{{U}_{i,j}}\rc
\leq \boldsymbol{H}_{\eps}\lc x_{i},y_{j},\widetilde{V}_{i,j},\D^+{{U}_{i,j}},\D^-{{U}_{i,j}}\rc,
$$
hence $(U_1,U_2)$ is a subsolution to the discrete HJB system $(V_1,V_2)=\Gamma_{\eps}(\widetilde{V}_1,\widetilde{V}_2)$. \cref{Prop: comparison eps} yields the desired result. 
\end{proof}
We deduce the following invariance principle for $\Gamma_{\eps}$, analogous to \cref{Prop: invariance}. 
\begin{proposition}\label{Prop: invariance eps}
If $\check{U}\preceq \widetilde{V}\preceq \check{V}$ and $V=(V_1,V_2)=\Gamma_{\eps}(\widetilde{V}_1,\widetilde{V}_2)$, then $\check{U}\preceq {V}\preceq \check{V}$. 
\end{proposition}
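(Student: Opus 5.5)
The plan is to sandwich $\Gamma_{\eps}(\widetilde{V})$ between $\Gamma_{\eps}(\check{U})$ and $\Gamma_{\eps}(\check{V})$ using monotonicity, and then to bound these two outer images by the barrier lemma.

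First, let $U^{\flat}=\Gamma_{\eps}(\check{U}_1,\check{U}_2)$ and $V^{\sharp}=\Gamma_{\eps}(\check{V}_1,\check{V}_2)$. By \cref{Prop:sub-super disc}, we have $\check{U}\preceq\check{V}$: as grid functions, $\check{U}_{i,j}\le\check{V}_{i,j}$ since $rx_i+y_1< b(x_i+y_2/r)$ and $1-\g<0$. This inequality also follows from the comparison principles, as both are sub/supersolutions. So the chains $\check{U}\preceq\check{U}\preceq\widetilde{V}\preceq\check{V}$ and $\check{U}\preceq\widetilde{V}\preceq\check{V}\preceq\check{V}$ are admissible in \cref{Lemma: monotone fp}.

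Applying \cref{Lemma: monotone fp} to the pair $(\check{U},\widetilde{V})$ gives $U^{\flat}\preceq V$. Applying it to the pair $(\widetilde{V},\check{V})$ gives $V\preceq V^{\sharp}$. Finally, \cref{Lemma: early barrier} (i) yields $\check{U}\preceq U^{\flat}$, and (ii) yields $V^{\sharp}\preceq\check{V}$. Chaining these gives $\check{U}\preceq V\preceq\check{V}$.

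The only subtle point is that $\Gamma_{\eps}$ must be well defined on these inputs, meaning \cref{Eq: D-HJB Fix} with the given $\widetilde{V}$ has a solution. I take this as implicit in the notation $V=\Gamma_{\eps}(\widetilde{V})$ of the statement, and likewise for $\check{U}$ and $\check{V}$. If existence must be justified, it can be obtained as in \cref{Prop: Perron}, or by a Howard iteration for the frozen-$\widetilde{V}$ problem, which is a linear monotone system at each step. Uniqueness follows from \cref{Prop: comparison eps}.

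Alternatively, one can avoid $U^{\flat}$ and $V^{\sharp}$ altogether. Using \cref{Lemma: num H_Veps} with $0<\theta<1$ and $\check{U}\preceq\widetilde{V}$, one checks directly that $\check{U}$ is a subsolution of \cref{Eq: D-HJB Fix}, and similarly that $\check{V}$ is a supersolution. \cref{Prop: comparison eps} then concludes. The main care needed in this route is the boundary nodes $i=0,I$, which are handled by the same truncation conventions used in the proof of \cref{Prop:sub-super disc}.
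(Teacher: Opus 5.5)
Your main argument is correct and is the paper's proof: $V$ is sandwiched between $\Gamma_{\eps}(\check{U})$ and $\Gamma_{\eps}(\check{V})$ by \cref{Lemma: monotone fp}, and \cref{Lemma: early barrier} bounds these by $\check{U}$ and $\check{V}$. Your alternative route is equally valid; it just merges those two lemmas into a single application of \cref{Prop: comparison eps}.

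Drop the aside that $\check{U}\preceq\check{V}$ ``also follows from the comparison principles''. For $0<\theta<1$, \cref{Prop: comparison} is not available, and \cref{Prop: comparison eps} already assumes this ordering. Your direct check settles it: since $b>r$, $b(x_i+y_2/r)>rx_i+y_2>rx_i+y_1$.
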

\begin{proof}
Let us denote $\widehat{V}=(\widehat{V}_1,\widehat{V}_2)=\Gamma_{\eps}(\check{V}_1,\check{V}_2)$. From \cref{Lemma: monotone fp} and $\widetilde{V}\preceq \check{V}$, $V \preceq \widehat{V}$. From \cref{Lemma: early barrier} we obtain $V\preceq \widehat{V}\preceq \check{V}$. The proof for $\check{U}\preceq {V}$ is similar.
\end{proof}
\begin{definition}\label{Def: sol early}
The grid function $V^{\eps}$ is a solution to \cref{Eq: D-HJB HR} if the equations are satisfied and $\check{U}\preceq V^{\eps}\preceq \check{V}$. We say that $\underline{V}^{\eps}$ is the minimal solution of \cref{Eq: D-HJB HR} if it is a solution and $\underline{V}^{\eps}\preceq V^{\eps}$ for all $V^{\eps}$ solution to \cref{Eq: D-HJB HR}.
\end{definition}
\begin{corollary}\label{Cor: leq Vmin}
If $\check{U}\preceq \widetilde{V}\preceq \underline{V}^{\eps}$ and $(V_1,V_2)=\Gamma_{\eps}(\widetilde{V}_1,\widetilde{V}_2)$ then ${V}\preceq \underline{V}^{\eps}$.
\end{corollary}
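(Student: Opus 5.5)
The corollary follows directly from the monotonicity of $\Gamma_{\eps}$ (\cref{Lemma: monotone fp}), combined with the fact that the minimal solution is a fixed point of $\Gamma_{\eps}$.

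First, $\underline{V}^{\eps}$ is a fixed point of $\Gamma_{\eps}$. By \cref{Def: sol early}, it solves \cref{Eq: D-HJB HR}. This is precisely the system \cref{Eq: D-HJB Fix} with $\widetilde{V}=\underline{V}^{\eps}$, since the Hamiltonian in \cref{Eq: D-HJB HR} is evaluated at the unknown itself. The discrete system \cref{Eq: D-HJB Fix} with frozen $\widetilde{V}$ has at most one solution: apply \cref{Prop: comparison eps} in both directions, which is allowed because $\check{U}\preceq \underline{V}^{\eps}\preceq\check{V}$ by \cref{Def: sol early}. Hence $\Gamma_{\eps}(\underline{V}^{\eps}_1,\underline{V}^{\eps}_2)=\underline{V}^{\eps}$.

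Second, apply \cref{Lemma: monotone fp} with $\widetilde{U}:=\widetilde{V}$ and with $\underline{V}^{\eps}$ in the role of its second argument. The required ordering
\[
\check{U}\preceq \widetilde{V}\preceq \underline{V}^{\eps}\preceq \check{V}
\]
holds: the first two inequalities are the hypothesis of the corollary, and the last comes from \cref{Def: sol early}. We obtain $V=\Gamma_{\eps}(\widetilde{V})\preceq \Gamma_{\eps}(\underline{V}^{\eps})=\underline{V}^{\eps}$.

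The only point needing care is that $\Gamma_{\eps}$ is well defined, i.e. that \cref{Eq: D-HJB Fix} has a solution for admissible $\widetilde{V}$, so that $V$ makes sense. This is implicit in the statement, since $V$ is given. Uniqueness, which is what the identification $\Gamma_{\eps}(\underline{V}^{\eps})=\underline{V}^{\eps}$ uses, follows from the comparison principle. I do not expect any real obstacle.
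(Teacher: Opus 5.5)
Your proof is correct and is the argument the paper intends; the corollary is stated there without proof, right after the monotonicity of $\Gamma_{\eps}$. You note that $\underline{V}^{\eps}$ is a fixed point of $\Gamma_{\eps}$ and apply monotonicity along the chain $\check{U}\preceq\widetilde{V}\preceq\underline{V}^{\eps}\preceq\check{V}$; your check that this fixed-point identity holds, via uniqueness from the comparison principle for the frozen system, fills in a detail the paper leaves implicit and is done correctly.
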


Taking advantage of the theoretical analysis above, we propose \cref{HTK algo} to solve the discrete HJB equation in the case $0<\theta<1$. Let $\bar{c}_j$ be defined as in \cref{Eq: def cbar}. 

\begin{algorithm}[!h]
\caption{Howard-Tarski-Kantorovich algorithm: upward variant}
\label{HTK algo}
\begin{algorithmic}[1]
\State 
Initialize $V^{(0)}_{i,j}=\check{U}_{i,j}$, and set $k = 0$ \Comment{Outer loop}
\Repeat
\State Use Howard algorithm to solve: 
 \beq\label{Eq: Tarski solve Vk}
\left\{
      \begin{aligned}
\, &\frac{\rho}{\theta} V^{(k+1)}_{i,1}
=\boldsymbol{H}_{\eps}\lc x_i,y_1,V^{(k)}_{i,1},\D^+{V^{(k+1)}_{i,1}},\D^-{V^{(k+1)}_{i,1}}\rc
+\lambda_1\lc V^{(k+1)}_{i,2}-V^{(k+1)}_{i,1} \rc,\\
&\frac{\rho}{\theta} V^{(k+1)}_{i,2}
=\boldsymbol{H}_{\eps}\lc x_i,y_2,V^{(k)}_{i,2},\D^+{V^{(k+1)}_{i,2}},\D^-{V^{(k+1)}_{i,2}}\rc
+\lambda_2\lc V^{(k+1)}_{i,1}-V^{(k+1)}_{i,2} \rc.
     \end{aligned}
   \right.
\eeq
   \Repeat
   \State Initialize $V^{(k,0)}_{i,j}=V^{(k)}_{i,j}$, and set $n = 0$ \Comment{Inner loop}
   \State Update the policy 
\beq\label{Eq: Howard-Tarski-Kantorovich P-update}
\left\{
\begin{aligned}
\,\, &c^{F,(k,{n}+1)}_{i,j}=\min\left\{\rho^{\p}\lc \D^+{V^{(k,{n})}_{i,j}}\rc_+^{-\p}\lc(1-\g){V}^{(k)}_{i,j}\rc^{\frac{1-\g \p}{1-\g}},\bar{c}_{j}(x_i)\right\}\\
&\text{for}\,\,x_i<\bar{x},\quad
c^{F,({n}+1)}_{I,j}=r\bar{x}+y_j,\quad  s^{F,(k,{n}+1)}_{i,j}=rx_i+y_j- c^{F,(k,{n}+1)}_{i,j},\\
&c^{B,(k,{n}+1)}_{i,j}=\max\left\{\min\left\{1/{\eps},\rho^{\p}\lc \D^-{V^{(k,{n})}_{i,j}}\rc_+^{-\p}\lc(1-\g){V}^{(k)}_{i,j}\rc^{\frac{1-\g \p}{1-\g}}\right\},\bar{c}_{j}(x_i)\right\}\\
& \text{for}\,\,x_i>\ux,\quad
 c^{B,({n}+1)}_{0,j}=r\ux+y_j,\quad  s^{B,(k,{n}+1)}_{i,j}=rx_i+y_j- c^{B,(k,{n}+1)}_{i,j}.
\end{aligned}
   \right.
\eeq
   \State Solve 
\beq\label{Eq: Howard inside Tarski solve V}
\begin{aligned}
\frac{\rho V^{(k,{n}+1)}_{i,j}}{\theta}
={}&s^{F,(k,{n}+1)}_{i,j}\D^+{V^{(k,{n}+1)}_{i,j}}+s^{B,(k,{n}+1)}_{i,j}\D^-{V^{(k,{n}+1)}_{i,j}}\\
&+\mathcal{F}\lc c^{F,(k,{n}+1)}_{i,j},V^{(k)}_{i,j}\rc+\mathcal{F}\lc c^{B,(k,{n}+1)}_{i,j},V^{(k)}_{i,j}\rc-\mathcal{F}\lc \bar{c}_{j}(x_i),V^{(k)}_{i,j}\rc\\
&+\lambda_j\lc V^{(k,{n}+1)}_{i,\bar \jmath}-V^{(k,{n}+1)}_{i,j} \rc.
\end{aligned}
\eeq
     \State $n\gets n+1$
     \Until{$\sum_j\left(\max_i\left\vert c^{F,(k,{n}+1)}_{i,j}-c^{F,(k,{n})}_{i,j}\right\vert+\max_i\left\vert c^{B,(k,{n}+1)}_{i,j}-c^{B,(k,{n})}_{i,j}\right\vert\right)<\text{Tol}_c$}
 \State Set $V^{(k+1)}_{i,j}=V^{(k,n)}_{i,j}$.
 \State $k\gets k+1$
\Until{$\sum_j\max_i\frac{\left\vert V^{(k+1)}_{i,j}-V^{(k)}_{i,j}\right\vert}{1+\left\vert V^{(k)}_{i,j}\right\vert}<\text{Tol}_V$.} 
\end{algorithmic}
\end{algorithm}

\begin{theorem}\label{Thm: Tarski convergence}
The sequence $V^{(k)}$ generated by \cref{HTK algo} converges to the minimal solution $\underline{V}^{\eps}$ of \cref{Eq: D-HJB HR}. 
\end{theorem}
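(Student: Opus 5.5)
The strategy is the classical Tarski--Kantorovich monotone iteration for $\Gamma_{\eps}$, started from the bottom barrier $\check{U}$. I will treat the inner Howard loop as an exact solver of \cref{Eq: Tarski solve Vk}, i.e. $V^{(k+1)}=\Gamma_{\eps}(V^{(k)})$. Its convergence for fixed $\widetilde V=V^{(k)}$ follows as in the proof of \cref{Thm: Howard-Newton}: the comparison principle \cref{Prop: comparison eps} for the frozen problem gives monotonicity of the Howard iterates in $n$, and $\check V$ bounds them from above. Existence and uniqueness of $\Gamma_{\eps}(\widetilde V)$ for $\check U\preceq\widetilde V\preceq\check V$ also follow from \cref{Prop: comparison eps}, by the same Perron/Howard reasoning.

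\textbf{Step 1: monotonicity of the iterates.} We have $V^{(0)}=\check U$, and \cref{Lemma: early barrier}(i) gives $V^{(1)}=\Gamma_{\eps}(\check U)\succeq \check U=V^{(0)}$. Suppose $V^{(k-1)}\preceq V^{(k)}$ with both in $[\check U,\check V]$; this holds by \cref{Prop: invariance eps}. Then \cref{Lemma: monotone fp} yields $V^{(k)}=\Gamma_{\eps}(V^{(k-1)})\preceq\Gamma_{\eps}(V^{(k)})=V^{(k+1)}$. So $(V^{(k)})$ is nondecreasing and stays in $[\check U,\check V]$. Since it lives in a finite-dimensional space, it converges componentwise to some $V^\infty$ with $\check U\preceq V^\infty\preceq\check V$.

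\textbf{Step 2: the limit is a solution.} I pass to the limit in \cref{Eq: Tarski solve Vk}. The map $(\widetilde V,V)\mapsto \boldsymbol{H}_{\eps}(x_i,y_j,\widetilde V_{i,j},\D^+V_{i,j},\D^-V_{i,j})$ is continuous on the relevant bounded set. It is a supremum over the compact control sets $[0,\bar c_j(x_i)]$ and $[\bar c_j(x_i),1/\eps]$ of functions that are continuous in $(c,\widetilde V,p^F,p^B)$, because $(1-\g)\widetilde V$ is bounded away from $0$ and $\infty$ between the barriers. Hence $V^\infty$ satisfies \cref{Eq: D-HJB HR}, i.e. $V^\infty=\Gamma_{\eps}(V^\infty)$. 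Together with the bounds from Step 1, this makes $V^\infty$ a solution in the sense of \cref{Def: sol early}.

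\textbf{Step 3: minimality.} Let $V^{\eps}$ be any solution with $\check U\preceq V^{\eps}\preceq\check V$. Since $V^{(0)}=\check U\preceq V^{\eps}$, induction with \cref{Lemma: monotone fp} gives $V^{(k+1)}=\Gamma_{\eps}(V^{(k)})\preceq\Gamma_{\eps}(V^{\eps})=V^{\eps}$. Letting $k\to\infty$ gives $V^\infty\preceq V^{\eps}$. So $V^\infty$ is the minimal solution $\underline V^{\eps}$, and in particular the minimal solution exists and is unique.

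\textbf{Main obstacle.} The delicate point is justifying that the inner loop actually produces $\Gamma_{\eps}(V^{(k)})$. Concretely, with $\widetilde V$ frozen, I need to show that the Howard iterates $V^{(k,n)}$ are monotone in $n$ and bounded, and that the limit solves \cref{Eq: D-HJB Fix}. I would copy Steps 1--3 of the proof of \cref{Thm: Howard-Newton}, using \cref{Lemma: c max} (with $V^{(k)}$ in place of $V^{(n)}$ inside $\mathcal F$) and \cref{Prop: comparison eps}. This is in fact simpler than the Newton case, since $\mathcal F$ is evaluated at the fixed $\widetilde V$ and each policy-evaluation step is linear. The remaining technical check is that the policy evaluation matrix is an M-matrix. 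This holds because $s^F\ge0$, $s^B\le0$ and $\rho/\theta>0$, so each linear system is uniquely solvable. Continuity of the policy update then ensures the limit policy is optimal.
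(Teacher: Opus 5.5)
Your proposal is correct and follows the paper's proof essentially step by step. It uses invariance of $[\check{U},\check{V}]$ via \cref{Prop: invariance eps}, monotone increase of $V^{(k)}$ from \cref{Lemma: early barrier} and \cref{Lemma: monotone fp}, convergence by boundedness in finite dimension, and minimality by induction against an arbitrary solution, and it defers the inner Howard loop to the argument of \cref{Thm: Howard-Newton} exactly as the paper does. Two of your details go slightly beyond the paper: the explicit continuity argument for passing to the limit in \cref{Eq: Tarski solve Vk}, which the paper covers only by ``from the design of the algorithm'', and the direct use of \cref{Lemma: monotone fp} with $\Gamma_{\eps}(V^{\eps})=V^{\eps}$ in place of \cref{Cor: leq Vmin}, which is phrased in terms of $\underline{V}^{\eps}$, the very object being constructed; both only make explicit what the paper leaves implicit.
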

\begin{proof}
{\it{Step 1.}} Solvability of the Howard inner loop. We first claim that for each $\left(V^{(k)}_{i,1},V^{(k)}_{i,2}\right)$ such that $\check{U}\preceq V^{(k)}\preceq\check{V}$, the sequence $V^{(k,n)}$ converges to $V^{(k+1)}$. The proof is similar to that of \cref{Thm: Howard-Newton}, hence we skip it.\par
{\it{Step 2.}} Uniform boundedness of the sequence $V^{(k)}$. From \cref{Prop: invariance eps}, we deduce that if $\check{U}\preceq V^{(k)}\preceq \check{V}$ then $\check{U}\preceq V^{(k+1)}\preceq \check{V}$. By induction, $\check{U}\preceq V^{(k)}\preceq \check{V}$ holds for all $k$. This ensures that at each outer iteration the term $\lc(1-\g){V}^{(k)}_{i,j}\rc^{\frac{1-\g \p}{1-\g}}$ is always well defined and bounded. \par 
{\it{Step 3.}} Monotonicity and convergence. We first observe that \cref{Lemma: early barrier} implies $V^{(1)}\succeq V^{(0)}$. From \cref{Lemma: monotone fp}, if $V^{(k)}\succeq V^{(k-1)}$, then $V^{(k+1)}\succeq V^{(k)}$. Arguing by induction, this implies that $V^{(k)}$ is increasing. Since $V^{(k)}\preceq \check{V}$ holds for all $k$, $V^{(k)}$ converges to a limit $\widehat{V}$ such that $\widehat{V}\preceq \check{V}$. \par
{\it{Step 4.}} Minimality. From \cref{Def: sol early}, we know if there exists a solution $V^{\eps}$, then ${V}^{(0)}=\check{U}\preceq V^{\eps}$. From \cref{Cor: leq Vmin}, an easy induction leads to ${V}^{(k)}\preceq V^{\eps}$ for all $k$, hence $\widehat{V}\preceq V^{\eps}$. From the design of the algorithm, $\widehat{V}$ is solution to \cref{Eq: D-HJB HR}, therefore $\widehat{V}=\underline{V}^{\eps}$. 
\end{proof}
\begin{remark}
In \cref{HN algo}, we only need a strictly negative upper bound so that $\lc(1-\g){V}^{(n)}_{i,j}\rc^{\frac{1-\g \p}{1-\g}}$ is well defined, since $\lc(1-\g){V}^{(n)}_{i,j}\rc^{\frac{1-\g \p}{1-\g}}\to 0 $ if ${V}^{(n)}_{i,j}\to -\infty$. In \cref{HTK algo}, with $1-\g \psi<0$, we need both upper and lower bounds to ensure that $\lc(1-\g){V}^{(k)}_{i,j}\rc^{\frac{1-\g \p}{1-\g}}$ is well defined and bounded.
\end{remark}


The next result deals with the monotone behavior of the solution to \cref{Eq: D-HJB Fix} as $\eps$ decreases. 
\begin{proposition}\label{Prop: ordering eps}
Assume $\eps>\eps'>0$ and $\check{U}\preceq \widetilde{V}^{\eps}\preceq  \widetilde{V}^{\eps'}\preceq \check{V}$. Let ${V}^{\eps}$ and ${V}^{\eps'}$ be respectively the solutions to \cref{Eq: D-HJB Fix} with $(V^{\eps}_1,V^{\eps}_2)=\Gamma_{\eps}(\widetilde{V}^{\eps}_1,\widetilde{V}^{\eps}_2)$ and $(V^{\eps'}_1,V^{\eps'}_2)=\Gamma_{\eps'}(\widetilde{V}^{\eps'}_1,\widetilde{V}^{\eps'}_2)$. Then ${V}^{\eps}\preceq {V}^{\eps'}$.
\end{proposition}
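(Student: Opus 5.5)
The plan is to show that $V^{\eps}$ is a discrete subsolution of the system defining $V^{\eps'}=\Gamma_{\eps'}(\widetilde{V}^{\eps'})$. Once that is established, \cref{Prop: comparison eps} (with parameter $\eps'$ and frozen data $\widetilde{V}^{\eps'}$, which satisfies $\check{U}\preceq \widetilde{V}^{\eps'}\preceq \check{V}$) yields ${V}^{\eps}\preceq {V}^{\eps'}$. This follows the pattern of the proof of \cref{Lemma: monotone fp}. The only new ingredient is the comparison of $\boldsymbol{H}_{\eps}$ with $\boldsymbol{H}_{\eps'}$ at fixed arguments.

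\emph{Step 1: monotonicity in $\eps$.} For fixed $(x_i,y_j,W,\boldsymbol{p}^F,\boldsymbol{p}^B)$ we claim
\[
\boldsymbol{H}_{\eps}\lc x_i,y_j,W,\boldsymbol{p}^{F},\boldsymbol{p}^{B}\rc\leq \boldsymbol{H}_{\eps'}\lc x_i,y_j,W,\boldsymbol{p}^{F},\boldsymbol{p}^{B}\rc .
\]
The forward part $H^{\uparrow}_\eps=H^\uparrow$ does not depend on $\eps$, and neither does the term $-\mathcal{F}(\bar c_j(x_i),W)$. The backward part is given by \cref{Eq: H from F eps} as a supremum over $c^B\in[rx_i+y_j,1/\eps]$. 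Since $1/\eps<1/\eps'$, this interval is contained in $[rx_i+y_j,1/\eps']$, so $H^{\downarrow}_\eps\le H^{\downarrow}_{\eps'}$.

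We also need $\boldsymbol{H}_\eps$ to coincide with the sup-form, i.e. the truncated first-order-condition control in \cref{Eq: CF CB proj} must be the maximizer over the truncated set. This is \cref{Lemma: c max}, which follows from concavity of $c\mapsto -c\,\boldsymbol{p}^B+\mathcal{F}(c,W)$. Two situations need care. When $\boldsymbol{p}^B\le 0$ the objective is increasing in $c$, so the maximizer is $1/\eps$, consistent with $(\cdot)_+^{-\p}=+\infty$ capped at $1/\eps$. At the boundary nodes $i=0$ and $i=I$ there is nothing to check.

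\emph{Step 2: subsolution property.} Write the equation for $V^{\eps}$ and bound its Hamiltonian term in two moves. First, by \cref{Lemma: num H_Veps} with $0<\theta<1$ (increasing in the $V$-slot) and $\widetilde V^{\eps}\preceq\widetilde V^{\eps'}$, replace the frozen value $\widetilde V^{\eps}_{i,j}$ by $\widetilde V^{\eps'}_{i,j}$. Both values are negative since they lie below $\check V<0$. Second, use Step 1 to replace $\eps$ by $\eps'$. Together these give
\[
\frac{\rho}{\theta}V^{\eps}_{i,j}\le \boldsymbol{H}_{\eps'}\lc x_i,y_j,\widetilde V^{\eps'}_{i,j},\D^+V^{\eps}_{i,j},\D^-V^{\eps}_{i,j}\rc+\lambda_j\lc V^{\eps}_{i,\bar\jmath}-V^{\eps}_{i,j}\rc,
\]
which is exactly the subsolution inequality for $\Gamma_{\eps'}(\widetilde V^{\eps'})$. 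Then \cref{Prop: comparison eps} concludes.

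\emph{Main obstacle.} The delicate point is Step 1 in the regime $\D^-V_{i,j}\le 0$, where $\boldsymbol{H}_\eps$ is defined only through the truncation. There the linear term $s^B\boldsymbol{p}^B$ is nonnegative and increasing in $c^B$, so the supremum is attained at the cap, and enlarging the cap can only increase it. Using the definition \cref{Eq: H from F eps} directly, together with the identity $H^{\downarrow}_\eps=\sup_{c\in[\bar c_j,1/\eps]}\{\cdots\}$, handles all cases uniformly.

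A secondary point to keep straight is that the monotonicity used in \cref{Prop: comparison eps} (\cref{Lemma: H monotone}: increasing in $\boldsymbol{p}^F$, decreasing in $\boldsymbol{p}^B$) must also hold for $\boldsymbol{H}_{\eps'}$. It does, because $\boldsymbol{H}_{\eps'}$ is a supremum of functions that are affine in $(\boldsymbol{p}^F,\boldsymbol{p}^B)$ with coefficients $s^F\ge0$ and $s^B\le0$.
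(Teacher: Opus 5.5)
Your proposal is correct and follows the paper's proof. The paper also shows that $V^{\eps}$ is a subsolution of the system $\Gamma_{\eps'}(\widetilde V^{\eps'})$, by combining $\boldsymbol{H}_{\eps}\le\boldsymbol{H}_{\eps'}$ (which holds because the backward control set $[\bar c_j(x_i),1/\eps']$ is larger) with the monotonicity of \cref{Lemma: num H_Veps} in the frozen slot, and then applies \cref{Prop: comparison eps}. The only differences are that you apply the two inequalities in the opposite order and that you justify $\boldsymbol{H}_{\eps}\le\boldsymbol{H}_{\eps'}$ explicitly, including the case $\D^-V_{i,j}\le 0$; the paper simply asserts this inequality.
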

\begin{proof}
Since $\eps>\eps'>0$ and $\check{U}_{i,j}\leq \widetilde{V}^{\eps}_{i,j}\leq \widetilde{V}^{\eps'}_{i,j}\leq \check{V}_{i,j}$,
$$
 \begin{aligned}
\boldsymbol{H}_{\eps}\lc x_i,y_j,\widetilde{V}^{\eps}_{i,j},\D^+{V^{\eps}_{i,j}},\D^-{V^{\eps}_{i,j}}\rc\leq {}&\boldsymbol{H}_{\eps'}\lc x_i,y_j,\widetilde{V}^{\eps}_{i,j},\D^+{V^{\eps}_{i,j}},\D^-{V^{\eps}_{i,j}}\rc\\
\leq {}&\boldsymbol{H}_{\eps'}\lc x_i,y_j,\widetilde{V}^{\eps'}_{i,j},\D^+{V^{\eps}_{i,j}},\D^-{V^{\eps}_{i,j}}\rc,
 \end{aligned}
$$
where we have used \cref{Lemma: num H_Veps} for the last inequality. Therefore, ${V}^{\eps}$ is a subsolution of the equation satisfied by ${V}^{\eps'}$, we apply \cref{Prop: comparison eps} and obtain ${V}^{\eps}\preceq {V}^{\eps'}$.
\end{proof}
Let us now deal with the existence of solution $\underline{V}$ to \cref{Eq: D-HJB H}, obtained as the limit of $\underline{V}^{\eps}$ as $\eps \to 0$.
\begin{proposition}\label{Prop: Tarski eps}
 If $\eps>\eps'>0$, then $\underline{V}^{\eps}\preceq \underline{V}^{\eps'}$. Moreover, $\underline{V}^{\eps}$ converges uniformly on $\mathcal{G}^{\D x}$ to the minimal solution of \cref{Eq: D-HJB H} as $\eps \to 0$. 
\end{proposition}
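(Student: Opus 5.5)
The proof has two parts: monotonicity in $\eps$, then passage to the limit with identification of the limit as the minimal solution of \cref{Eq: D-HJB H}.

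\emph{Monotonicity in $\eps$.} I would use the Tarski--Kantorovich representation from \cref{Thm: Tarski convergence}. Write $V^{(k),\eps}$ and $V^{(k),\eps'}$ for the outer iterates of \cref{HTK algo} with parameters $\eps$ and $\eps'$, both started from $\check{U}$. I would prove by induction on $k$ that $V^{(k),\eps}\preceq V^{(k),\eps'}$. The case $k=0$ is trivial. By \cref{Prop: invariance eps}, all iterates lie between $\check{U}$ and $\check{V}$, so \cref{Prop: ordering eps}, applied with $\widetilde{V}^{\eps}=V^{(k),\eps}$ and $\widetilde{V}^{\eps'}=V^{(k),\eps'}$, gives the induction step. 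Letting $k\to\infty$ and using \cref{Thm: Tarski convergence} yields $\underline{V}^{\eps}\preceq\underline{V}^{\eps'}$.

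\emph{Limit $\eps\to0$.} The family $\underline{V}^{\eps}$ is nondecreasing as $\eps$ decreases and is bounded above by $\check{V}$. Since the grid is finite, it converges uniformly on $\mathcal{G}^{\D x}$ to some $\underline{V}$ with $\check{U}\preceq\underline{V}\preceq\check{V}$.

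\emph{The limit solves \cref{Eq: D-HJB H}.} Here I would use a gradient argument. As in \cref{Prop: positive Dv}, and since the regularized Hamiltonian $\boldsymbol{H}_\eps$ with $c^B\le 1/\eps$ is finite, I want uniform bounds $0<\varsigma\le\D^-\underline{V}^{\eps}_{i,j}\le C$ for $\eps$ small. The upper bound follows as in \cref{Prop: discrete Dv bound}, from the two-sided barrier and the coercivity of $H^\uparrow$. For the lower bound I would argue as follows. $\D^-\underline{V}^\eps_{i,j}\ge0$ for $\eps$ small: if $\D^-<0$, then $s^B=\bar c_j(x_i)-1/\eps$ makes $s^B\D^-V\ge (1/\eps-\bar c_j(x_i))|\D^-V|$, and since $\mathcal F(1/\eps,\cdot)\to0$ while $\rho V/\theta$ is bounded, this forces $\D^-V\to 0^-$. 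Strict positivity is then obtained by the contradiction argument of \cref{Prop: positive Dv}, which only uses the equation and the upper barrier. Once $\D^-\underline{V}^{\eps}\ge\varsigma/2$ uniformly, the threshold $1/\eps$ exceeds $\rho^\p\varsigma^{-\p}((1-\g)\check U_{0,1})^{\frac{1-\g\p}{1-\g}}$ for small $\eps$. Then $\boldsymbol{H}_\eps=\boldsymbol{H}$ at $\underline{V}^\eps$, so $\underline{V}^\eps$ solves \cref{Eq: D-HJB H}. Passing to the limit using the continuity of $\boldsymbol{H}$ on $\{\boldsymbol{p}^B>0\}$ shows that $\underline V$ solves it too. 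In fact the sequence becomes stationary.

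\emph{Minimality.} Let $V$ be any solution of \cref{Eq: D-HJB H} with $\check U\preceq V\preceq\check V$. By \cref{Prop: positive Dv} and \cref{Cor: Dv positive}, whose proofs use only the upper barrier, we have $\D^-V\ge\varsigma_V>0$. Hence $V$ solves \cref{Eq: D-HJB HR} for all $\eps$ small, and so $\underline V^\eps\preceq V$ by the definition of minimality. Letting $\eps\to0$ gives $\underline V\preceq V$.

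The main obstacle is the uniform (in $\eps$) positive lower bound on $\D^-\underline V^\eps$, which is what lets us remove the truncation. The rest is monotone-sequence bookkeeping.
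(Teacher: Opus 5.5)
The first half of your proof is exactly the paper's argument: induction $V^{(k),\eps}\preceq V^{(k),\eps'}$ via \cref{Prop: ordering eps}, then $k\to\infty$, then monotone convergence below $\check V$. The paper's proof then simply asserts that the limit is the minimal solution of \cref{Eq: D-HJB H}. Your minimality step is correct.

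The gap is in the step you single out yourself: the lower bound on $\D^-\underline V^{\eps}$, on which your claim that the limit solves \cref{Eq: D-HJB H} rests.
\begin{itemize}
\item Your argument for $\D^-\underline V^{\eps}\ge 0$ only gives $(1/\eps-\bar c_j(x_i))\,|\D^-\underline V^{\eps}_{i,j}|\le C$, i.e. $\D^-\underline V^{\eps}_{i,j}\ge -C\eps$. That is not enough. If $\D^-\underline V^{\eps}_{i,j}\approx -a\eps$, then $s^B\D^-\underline V^{\eps}_{i,j}\to a>0$, and at that node the limit satisfies only a supersolution inequality, not the equation.
\item \cref{Prop: positive Dv} cannot be quoted for the truncated scheme. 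Its proof uses coercivity to get $\D^-V\ge 0$ and uses $H^{\downarrow}(x_i,y_j,V_{i,j},0)=0$. For $p\le 0$, however, $H^{\downarrow}_{\eps}(x_i,y_j,V_{i,j},p)=(1/\eps-\bar c_j(x_i))|p|+\mathcal{F}(1/\eps,V_{i,j})$, which is strictly negative at $p=0$.
\item Even strict positivity for each $\eps$ would not give the bound $\ge\varsigma/2$ uniformly in $\eps$ that you then invoke.
\end{itemize}

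What is missing is a version of the argument of \cref{Prop: positive Dv} that carries $\eps$-errors.
\begin{itemize}
\item By the barriers, $\mathcal{F}(1/\eps,V_{i,j})\ge -C\eps^{1/\p-1}$, and $H^{\downarrow}_{\eps}(x_i,y_j,V_{i,j},p)\ge -C(p^{1-\p}+\eps^{1/\p-1})$ for small $p\ge 0$.
\item Let $i^*\ge 1$ be the first index with $\D^-\underline V^{\eps}_{i^*,j^*}\le 0$ for some $j^*$. Then $\boldsymbol{H}_{\eps}\ge -C\eps^{1/\p-1}$ at $(i^*,j^*)$.
\item The weighted sum of the two equations at $i^*$, combined with $\underline V^{\eps}\preceq\check V<0$, gives $\boldsymbol{H}_{\eps}\le -c_0<0$ at $(i^*,\bar\jmath^*)$. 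Hence $\D^-\underline V^{\eps}_{i^*,\bar\jmath^*}\ge\kappa$, with $c_0,\kappa>0$ independent of $\eps$.
\item Subtract the equations at $(i^*-1,j^*)$ and $(i^*,j^*)$. At $(i^*-1,j^*)$ one has $\boldsymbol{H}_{\eps}\le 0$, because $\D^+\underline V^{\eps}_{i^*-1,j^*}\le 0<{p}_{\min}$ and, if $i^*\ge 2$, $\D^-\underline V^{\eps}_{i^*-1,j^*}>0$. This gives $\lambda_{j^*}\D x\,\kappa\le C\eps^{1/\p-1}$, a contradiction for $\eps$ small (depending on $\D x$).
\end{itemize}

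To finish, you have two options. Running the same argument with a small threshold $\tau>0$ in place of $0$ gives the uniform bound $\D^-\underline V^{\eps}\ge\tau$. Alternatively, once $\D^-\underline V^{\eps}>0$ is known, pass to the limit directly, since $H^{\downarrow}_{\eps}\to H^{\downarrow}$ uniformly on $\{p\ge 0\}$. Then apply \cref{Prop: positive Dv} to the limit to obtain $\varsigma$ a posteriori.
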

\begin{proof}
Let us denote by $V^{(k),\eps}$ the sequence of grid functions constructed by the Howard-Tarski-Kantorovich algorithm given the regularization parameter $\eps$. Observe that the initial guess does not depend on $\eps$, namely that $V^{(0),\eps}_{i,j}=V^{(0),\eps'}_{i,j}=\check{U}_{i,j}$. From \cref{Prop: ordering eps}, we infer that if $V^{(k),\eps}\preceq {V}^{(k),\eps'}$ then $V^{(k+1),\eps}\preceq {V}^{(k+1),\eps'}$. By induction, we obtain that $V^{(k),\eps}\preceq {V}^{(k),\eps'}$ for all $k\in \mathbb{N}$. Sending $k\to +\infty$ yields $\underline{V}^{\eps}\preceq \underline{V}^{\eps'}$. From \cref{Prop: invariance eps}, $\underline{V}^{\eps}\preceq \check{V}$ for all $\eps$. We thus conclude that the sequence $\underline{V}^{\eps}$ converges uniformly on $\mathcal{G}^{\D x}$ to the minimal solution of \cref{Eq: D-HJB H}. 
\end{proof}\par
We observe that, having established the existence of a minimal solution, we can use the same proof as that of \cref{Prop: positive Dv} to show $\D^-{\underline{V}_{i,j}}>\varsigma$ for all $i\geq1$ when $0<\theta<1$ . The regularized HJB equation \cref{Eq: D-HJB HR} and \cref{Eq: D-HJB H} have the same minimal solution if $1/{\eps}>\rho^{\p}\varsigma^{-\p}\lc(1-\g)\check{U}_{0,1}\rc^{\frac{1-\g \p}{1-\g}}$, where $\check{U}$ is defined in \cref{Prop:sub-super disc}. 
 \begin{proposition}
For $\eps>0$ sufficiently small, $\underline{V}^{\eps}=\underline{V}$. 
 \end{proposition}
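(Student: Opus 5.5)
The plan is to show that the minimal solution $\underline{V}$ of \cref{Eq: D-HJB H} (obtained in \cref{Prop: Tarski eps}) solves the regularized equation \cref{Eq: D-HJB HR} for small $\eps$. We then use minimality of $\underline{V}^{\eps}$ in one direction and the monotone convergence $\underline{V}^{\eps}\uparrow\underline{V}$ in the other.

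\emph{Step 1: $\underline{V}$ solves the regularized problem.} By the remark preceding the statement, the proof of \cref{Prop: positive Dv} applies to $\underline{V}$ when $0<\theta<1$, so $\D^-\underline{V}_{i,j}\geq\varsigma>0$ for all $i\geq 1$. Here we use that $\check{U}\preceq\underline{V}\preceq\check{V}$, which follows from \cref{Prop: invariance eps} by passing to the limit. Since $1-\g\p<0$ and $(1-\g)\underline{V}_{i,j}>0$, the quantity $\big((1-\g)\underline{V}_{i,j}\big)^{\frac{1-\g\p}{1-\g}}$ is a decreasing function of $(1-\g)\underline{V}_{i,j}$. It is therefore bounded above by its value at the smallest admissible $(1-\g)V$, namely at $V=\check{U}_{0,1}$. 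Hence the unconstrained backward consumption $\rho^{\p}(\D^-\underline{V}_{i,j})^{-\p}\big((1-\g)\underline{V}_{i,j}\big)^{\frac{1-\g\p}{1-\g}}$ is at most $\rho^{\p}\varsigma^{-\p}\big((1-\g)\check{U}_{0,1}\big)^{\frac{1-\g\p}{1-\g}}$. For $1/\eps$ larger than this bound, the truncation $\min\{1/\eps,\cdot\}$ in \cref{Eq: CF CB proj} is inactive, so $\boldsymbol{H}_\eps=\boldsymbol{H}$ at $\underline{V}$, and $\underline{V}$ is a solution of \cref{Eq: D-HJB HR} in the sense of \cref{Def: sol early}. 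By minimality, $\underline{V}^{\eps}\preceq\underline{V}$.

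\emph{Step 2: the reverse inequality.} By \cref{Prop: Tarski eps}, $\underline{V}^{\eps}$ is nondecreasing as $\eps$ decreases and converges to $\underline{V}$. This alone gives only $\underline{V}^{\eps}\preceq\underline{V}$ in the limit, not equality for finite $\eps$. So I would argue directly that $\underline{V}^{\eps}$ solves \cref{Eq: D-HJB H} for $\eps$ small, which by minimality of $\underline{V}$ gives $\underline{V}\preceq\underline{V}^{\eps}$.

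\emph{Main obstacle: a uniform gradient bound.} The key is a lower bound $\D^-\underline{V}^{\eps}_{i,j}\geq\varsigma'>0$ that is uniform in $\eps$ (at fixed $\D x$). I would obtain it by convergence on the finite grid. Since $\underline{V}^{\eps}\to\underline{V}$ uniformly on $\mathcal{G}^{\D x}$ and $\D^-\underline{V}_{i,j}\geq\varsigma$, there is $\eps_0$ such that $\D^-\underline{V}^{\eps}_{i,j}\geq\varsigma/2$ for all $\eps<\eps_0$. Then, for $1/\eps>\rho^{\p}(\varsigma/2)^{-\p}\big((1-\g)\check{U}_{0,1}\big)^{\frac{1-\g\p}{1-\g}}$, the truncation is again inactive at $\underline{V}^{\eps}$. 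In addition, $\D^-\underline{V}^{\eps}_{i,j}>0$, so the $+\infty$ branch of $H^\downarrow$ is never triggered. Hence $\boldsymbol{H}_\eps(\cdot,\underline{V}^{\eps},\dots)=\boldsymbol{H}(\cdot,\underline{V}^{\eps},\dots)$, and $\underline{V}^{\eps}$ solves \cref{Eq: D-HJB H} with $\check{U}\preceq\underline{V}^{\eps}\preceq\check{V}$. Minimality of $\underline{V}$ then gives $\underline{V}\preceq\underline{V}^{\eps}$, which combined with Step 1 yields $\underline{V}^{\eps}=\underline{V}$.

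As an alternative that avoids the limit argument, one could try to rerun the proof of \cref{Prop: positive Dv} directly for \cref{Eq: D-HJB HR}, using only the barriers $\check{U},\check{V}$. The limit argument above is the safer route, since it relies only on results already established.
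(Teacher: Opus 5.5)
Your proof is correct and rests on the same mechanism as the paper. The paper's only justification is the remark preceding the statement: $\D^-\underline V_{i,j}\geq\varsigma$ makes the truncation at $1/\eps$ inactive once $1/\eps>\rho^{\p}\varsigma^{-\p}\big((1-\g)\check U_{0,1}\big)^{\frac{1-\g\p}{1-\g}}$, so the two problems ``have the same minimal solution.'' That is your Step 1. As you note, it only yields $\underline V^{\eps}\preceq\underline V$, and this inequality already follows from the monotonicity in \cref{Prop: Tarski eps}, so Step 1 is dispensable.

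The reverse inequality is left implicit in the paper, and your Step 2 supplies it. On the fixed finite grid, uniform convergence $\underline V^{\eps}\to\underline V$ gives convergence of the difference quotients, so $\D^-\underline V^{\eps}_{i,j}\geq\varsigma/2$ for small $\eps$. Then $\underline V^{\eps}$ solves \cref{Eq: D-HJB H} within $[\check U,\check V]$, and minimality of $\underline V$ applies.

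The trade-off is that you get equality only for $\eps$ below a non-explicit $\eps_0$, which depends on how fast $\underline V^{\eps}\to\underline V$. The paper instead states an explicit threshold. However, equality at that explicit threshold would require a lower bound on $\D^-\underline V^{\eps}$ itself, which the paper does not provide. ``Sufficiently small'' is all the statement asks for, so your version suffices.

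There is one slip in Step 1.
\begin{itemize}
\item For $0<\theta<1$ one has $\g\p>1$, so $1-\g\p$ and $1-\g$ are both negative and the exponent $\frac{1-\g\p}{1-\g}$ is positive. Hence $t\mapsto t^{\frac{1-\g\p}{1-\g}}$ is increasing, not decreasing.
\item $\check U\preceq V$ gives $(1-\g)V_{i,j}\leq(1-\g)\check U_{i,j}\leq(1-\g)\check U_{0,1}$. So $\check U_{0,1}$ corresponds to the largest, not the smallest, admissible value of $(1-\g)V$.
\end{itemize}
The two errors cancel and your bound is correct, but the justification should be stated this way. This is also why the lower barrier $\check U$, rather than $\check V$, is what controls the truncation here.
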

 Next, we introduce \cref{HTK algo down}, which is a variant of \cref{HTK algo} in which the outer loop starts with $(\check{V}_1,\check{V}_2)$. Then, $V^{(k)}$ is a non increasing sequence of grid functions. 
 \begin{algorithm}[h!]
\caption{Howard-Tarski-Kantorovich algorithm: downward variant}
\label{HTK algo down}
\begin{algorithmic}[1]
\State 
Initialize $V^{(0)}_{i,j}=\check{V}_{i,j}$, and set $k = 0$ \Comment{Outer loop}
\Repeat
\State Use Howard algorithm to solve \cref{Eq: Tarski solve Vk} for $V^{(k+1)}$ 

   \Repeat
   \State Initialize $c^{(k,0)}_{i,j}=rx_i+y_j$, and set $n = 0$ \Comment{Inner loop}
   \State Policy evaluation: solve \cref{Eq: Howard inside Tarski solve V} for $V^{(k,n+1)}$
   \State Update the policy with \cref{Eq: Howard-Tarski-Kantorovich P-update}
      \State $n\gets n+1$
     \Until{$\sum_j\left(\max_i\left\vert c^{F,(k,{n}+1)}_{i,j}-c^{F,(k,{n})}_{i,j}\right\vert+\max_i\left\vert c^{B,(k,{n}+1)}_{i,j}-c^{B,(k,{n})}_{i,j}\right\vert\right)<\text{Tol}_c$}
 \State Set $V^{(k+1)}_{i,j}=V^{(k,n)}_{i,j}$.
 \State $k\gets k+1$
\Until{$\sum_j\max_i\frac{\left\vert V^{(k+1)}_{i,j}-V^{(k)}_{i,j}\right\vert}{1+\left\vert V^{(k)}_{i,j}\right\vert}<\text{Tol}_V$.} 
\end{algorithmic}
\end{algorithm}
\begin{theorem}\label{Thm: Tarski convergence maximal}
The sequence $V^{(k)}$ generated by \cref{HTK algo down} converges to the maximal solution $\overline{V}^{\eps}$ of \cref{Eq: D-HJB HR}. 
\end{theorem}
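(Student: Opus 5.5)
The argument mirrors the proof of \cref{Thm: Tarski convergence}, with the order reversed and the barrier $\check{V}$ playing the role of $\check{U}$. Maximal solution is understood in the sense of \cref{Def: sol early}: $\overline{V}^{\eps}$ is a solution of \cref{Eq: D-HJB HR} with $\check{U}\preceq \overline{V}^{\eps}\preceq\check{V}$, and $V^{\eps}\preceq\overline{V}^{\eps}$ for every such solution $V^{\eps}$. I would argue in four steps.

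\textbf{Step 1: the inner loop.} For each outer iterate with $\check{U}\preceq V^{(k)}\preceq \check{V}$, the inner Howard loop converges to $V^{(k+1)}=\Gamma_{\eps}(V^{(k)})$. The map $\Gamma_{\eps}$ is well defined and single valued on this order interval by \cref{Prop: comparison eps}. Since $\widetilde V=V^{(k)}$ is frozen, the inner loop is a standard Howard iteration for a monotone linear-in-gradient system, exactly as in \cref{Thm: Howard-Newton}. The inner initialization $c=rx_i+y_j$ (zero savings) produces a subsolution, so the inner iterates increase and are bounded by the supersolution $\check V$.

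\textbf{Step 2: invariance.} Starting from $V^{(0)}=\check{V}$, \cref{Prop: invariance eps} and induction give $\check{U}\preceq V^{(k)}\preceq\check{V}$ for all $k$. Hence $\lc(1-\g)V^{(k)}_{i,j}\rc^{\frac{1-\g\p}{1-\g}}$ is well defined and uniformly bounded.

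\textbf{Step 3: monotone convergence.} \cref{Lemma: early barrier}(ii) gives $V^{(1)}=\Gamma_{\eps}(\check{V})\preceq \check{V}=V^{(0)}$. If $V^{(k)}\preceq V^{(k-1)}$, both lie in $[\check U,\check V]$, so \cref{Lemma: monotone fp} yields $V^{(k+1)}\preceq V^{(k)}$. The sequence is therefore nonincreasing and bounded below by $\check{U}$. On the finite grid it converges pointwise, hence uniformly, to some $\widehat{V}$ with $\check U\preceq\widehat V\preceq\check V$.

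\textbf{Step 4: passing to the limit.} I expect this to be the main point needing care. I need $\widehat V=\Gamma_{\eps}(\widehat V)$, that is, that $\widehat V$ solves \cref{Eq: D-HJB HR}. I would let $k\to\infty$ in \cref{Eq: Tarski solve Vk}. This requires $\boldsymbol{H}_{\eps}(x_i,y_j,\widetilde V,p^F,p^B)$ to be continuous jointly in $(\widetilde V,p^F,p^B)$. It is, because $\boldsymbol H_\eps$ is a supremum over the compact control sets $[0,\bar c_j]$ and $[\bar c_j,1/\eps]$ of functions continuous in all arguments, and $\widetilde V$ stays in the compact range $[\check U,\check V]$ away from $0$. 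The truncation by $1/\eps$ is exactly what provides this compactness, even if some $\D^-V$ is nonpositive.

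\textbf{Maximality.} Let $V^{\eps}$ be any solution with $\check{U}\preceq V^{\eps}\preceq\check{V}$. Then $V^{(0)}=\check V\succeq V^{\eps}$. If $V^{(k)}\succeq V^{\eps}$, \cref{Lemma: monotone fp} applied with $\widetilde U=V^{\eps}$ and $\widetilde V=V^{(k)}$ gives $V^{\eps}=\Gamma_\eps(V^\eps)\preceq \Gamma_\eps(V^{(k)})=V^{(k+1)}$. This is the mirror of \cref{Cor: leq Vmin}. By induction and passage to the limit, $V^{\eps}\preceq\widehat V$. Hence $\widehat V=\overline V^{\eps}$, which also proves that the maximal solution exists.
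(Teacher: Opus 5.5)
Your proposal is correct and follows the route the paper indicates. It runs the proof of \cref{Thm: Tarski convergence} in reverse from $\check{V}$, using \cref{Lemma: early barrier}(ii), \cref{Lemma: monotone fp} and \cref{Prop: invariance eps}, together with the zero-savings initialization of the inner Howard loop that the paper singles out because $V^{(k)}$ need not be a subsolution of \cref{Eq: Tarski solve Vk}. Your joint-continuity argument for passing to the limit and your induction $V^{\eps}\preceq V^{(k)}$ fill in details the paper leaves implicit; one small fix is that $\mathcal{F}(c,\cdot)\to-\infty$ as $c\to 0$, so the supremum defining $H^{\uparrow}$ should be taken over $[\delta,\bar{c}_j(x_i)]$ for some $\delta>0$, which is harmless since $\bar{c}_j(x_i)\geq r\ux+y_1>0$.
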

The proof is similar to that of \cref{Thm: Tarski convergence}, hence we omit the details. Observe that the design of Howard iteration inner loop is different. In \cref{HTK algo}, $V^{(k,n)}$ is an increasing sequence for both indexes $k$ and $n$. Therefore, we initialize $V^{(k,0)}_{i,j}=V^{(k)}_{i,j}$ at each inner loop of \cref{HTK algo} in order to accelerate convergence. In \cref{HTK algo down}, $V^{(k)}$ is decreasing while $V^{(k,n)}$ is increasing in $n$ for each fixed $k$. Therefore, $V^{(k)}$ itself may no longer be a subsolution of \cref{Eq: Tarski solve Vk} and we initialize the inner loop of \cref{HTK algo down} using a feasible consumption policy. \par
Similarly to \cref{Prop: Tarski eps}, we can obtain the maximal solution $\overline{V}$ as the limit of $\overline{V}^{\eps}$ when $\eps\to 0$. Due to the lack of comparison principle in the case $0<\theta<1$, we do not have uniqueness results for \cref{HJB} or \cref{Eq: D-HJB H}. Nevertheless, in all our experiments. we have observed that \cref{HTK algo} and \cref{HTK algo down} converge to the same solution, i.e. $\underline{V}=\overline{V}$. This leads us to think that the solution to \cref{Eq: D-HJB H} is in fact unique also with $0<\theta<1$, even if this is not yet proved.

 \section{Numerical results}\label{Sec: Numerical}
For a complete solution of the MFG system \cref{MFG}, including the computation of equilibrium interest rates, we refer the reader to \cite{Achdou:2026aa}. 
In all numerical experiments, we apply uniform stopping criteria: $\text{Tol}_c=10^{-7}$ and $\text{Tol}_V=10^{-10}$. 
 We fix the following parameters (as in \cite{Achdou:2026aa}):
$
\rho=0.05,\,y_1=0.5,\,\,y_2=1.5,\,\lambda_1=0.2,\,\,\lambda_2=0.2,\,\ux=-0.15
$
and consider the following tests. {\textit{Test 1 MFG}}, with $\theta>1$, is solved with \cref{HN algo}. {\textit{Test 2 MFG}} and {\textit{Test 3 MFG}}, with $0<\theta<1$, are both solved with \cref{HTK algo} and \cref{HTK algo down}. 
\begin{itemize}
\item[] {\textit{Test 1 MFG:}} $\g=2$, $\psi=0.4$, $r^*=0.0288$.
\item[] {\textit{Test 2 MFG:}} $\g=4$, $\psi=0.5$, $r^*=0.0266$. {\textit{Test 3 MFG:}} $\g=20$, $\psi=0.5$, $r^*=0.0086$.
\end{itemize}

\begin{figure}[h!]
	\centering
	\caption{ Saving policy and asset distribution for Test 2 (solid) and 3 (dotted)}\label{Test 23 sg}
	\begin{tabular}{cc}
	\includegraphics[width=0.46\textwidth]{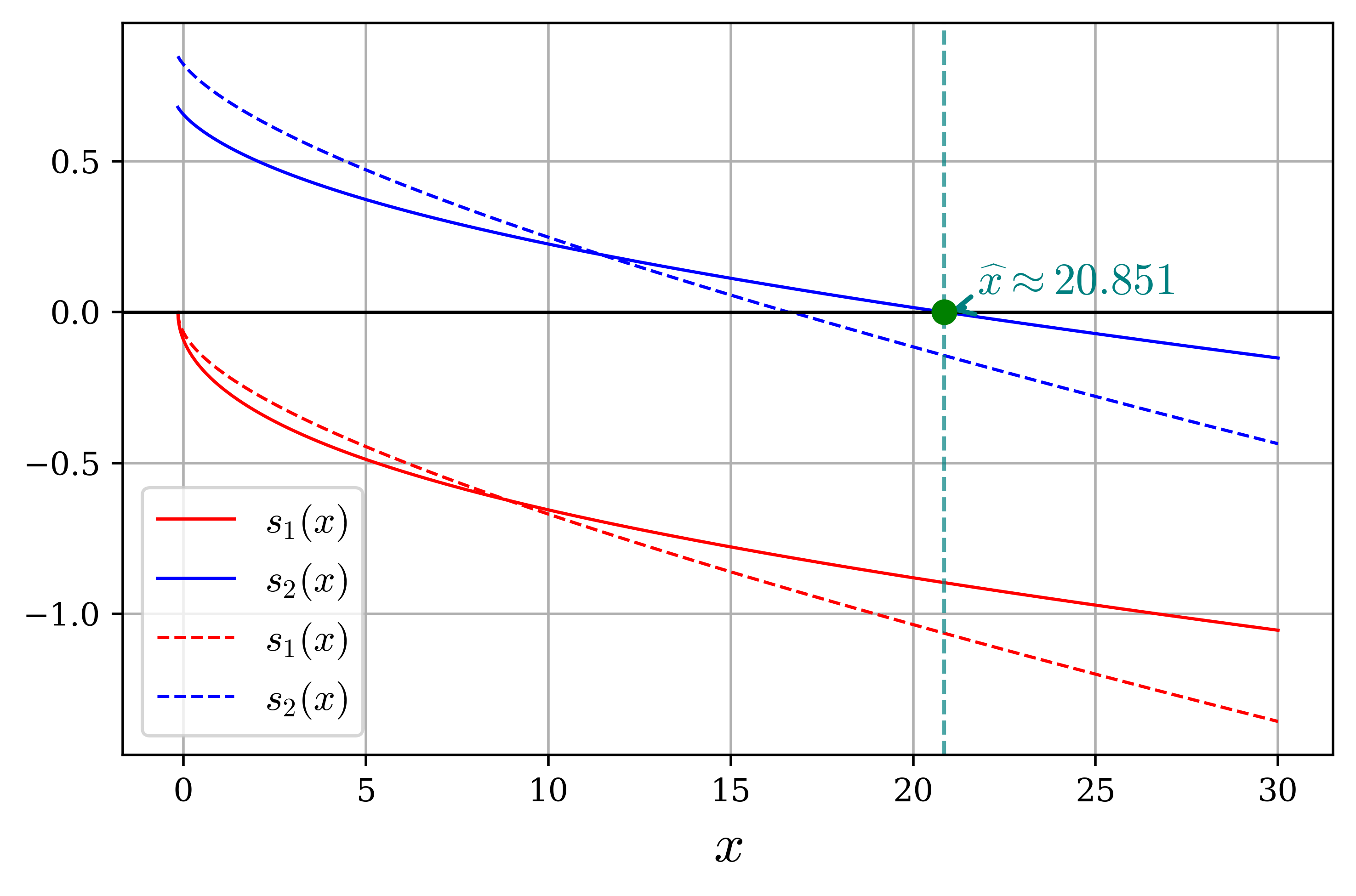} &
	\includegraphics[width=0.46\textwidth]{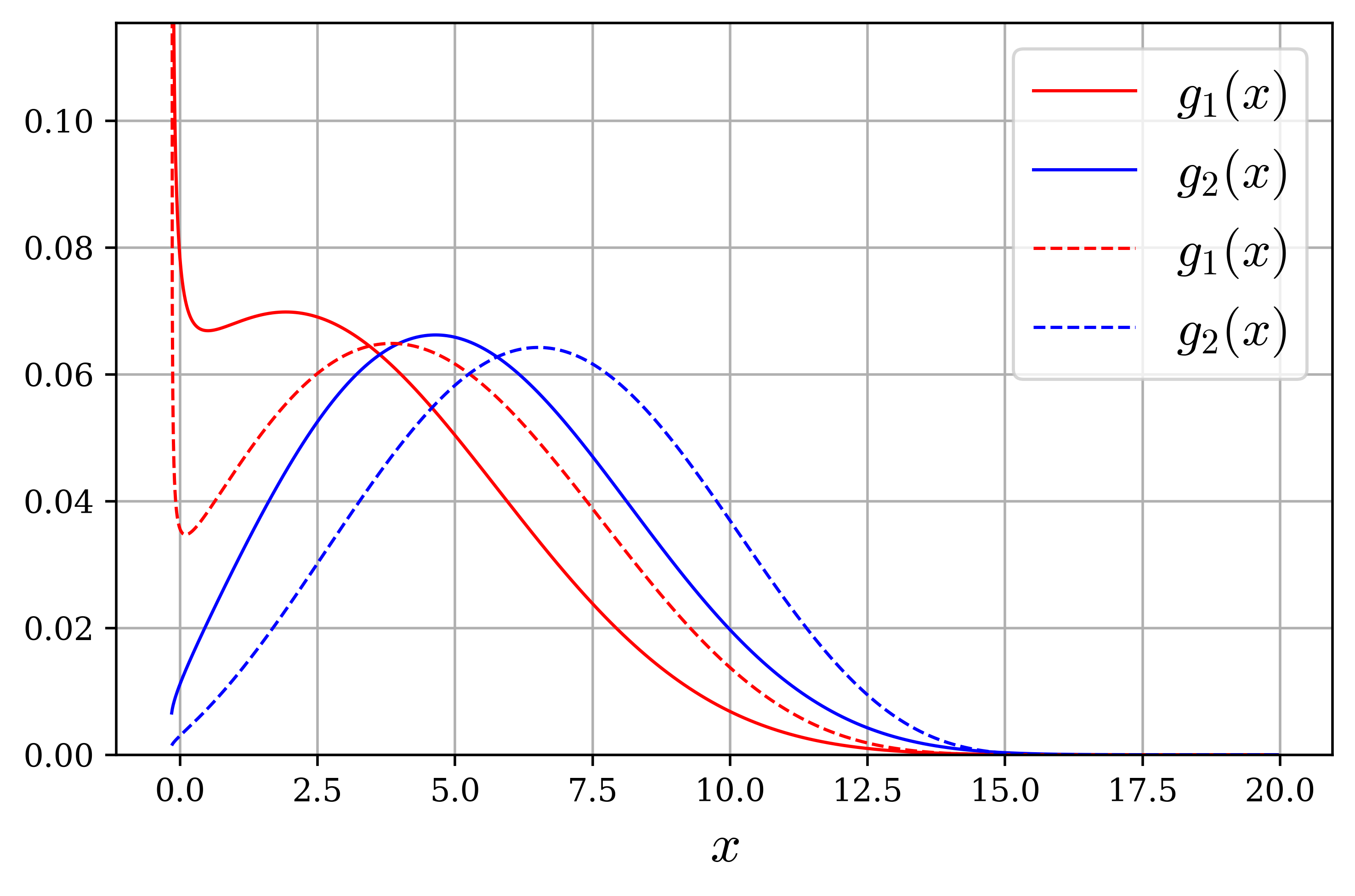} \\
	\end{tabular}
\end{figure}
Next, we focus on the convergence analysis of the finite-difference scheme and the algorithmic performance of the HJB solver. To isolate these aspects, tests for which the interest rates are held fixed at the equilibrium values are referred to as {\textit{Test 1 HJB}}, {\textit{Test 2 HJB}} and {\textit{Test 3 HJB}}.\par
For Test 1, \cref{Fig: vn_HN} illustrates the fact that the sequence produced by \cref{HN algo} is non decreasing as stated in \cref{Thm: Howard-Newton}, i.e. $V^{(n)} \preceq V^{(n+1)}$. For {\textit{Test 2 HJB}}, \cref{Fig: vk_HTK} (resp. \cref{Fig: vk_HTKdown})  illustrates the fact, stated in \cref{Thm: Tarski convergence} (resp \cref{Thm: Tarski convergence maximal}) , that \cref{HTK algo} (resp. \cref{HTK algo down}) produces a non decreasing (resp. non increasing) sequence of discrete functions.

 \begin{figure}[h!]
   \centering
    \caption{\cref{HN algo} constructs a non decreasing sequence of grid functions}\label{Fig: vn_HN}
    \includegraphics[width=0.5\linewidth]{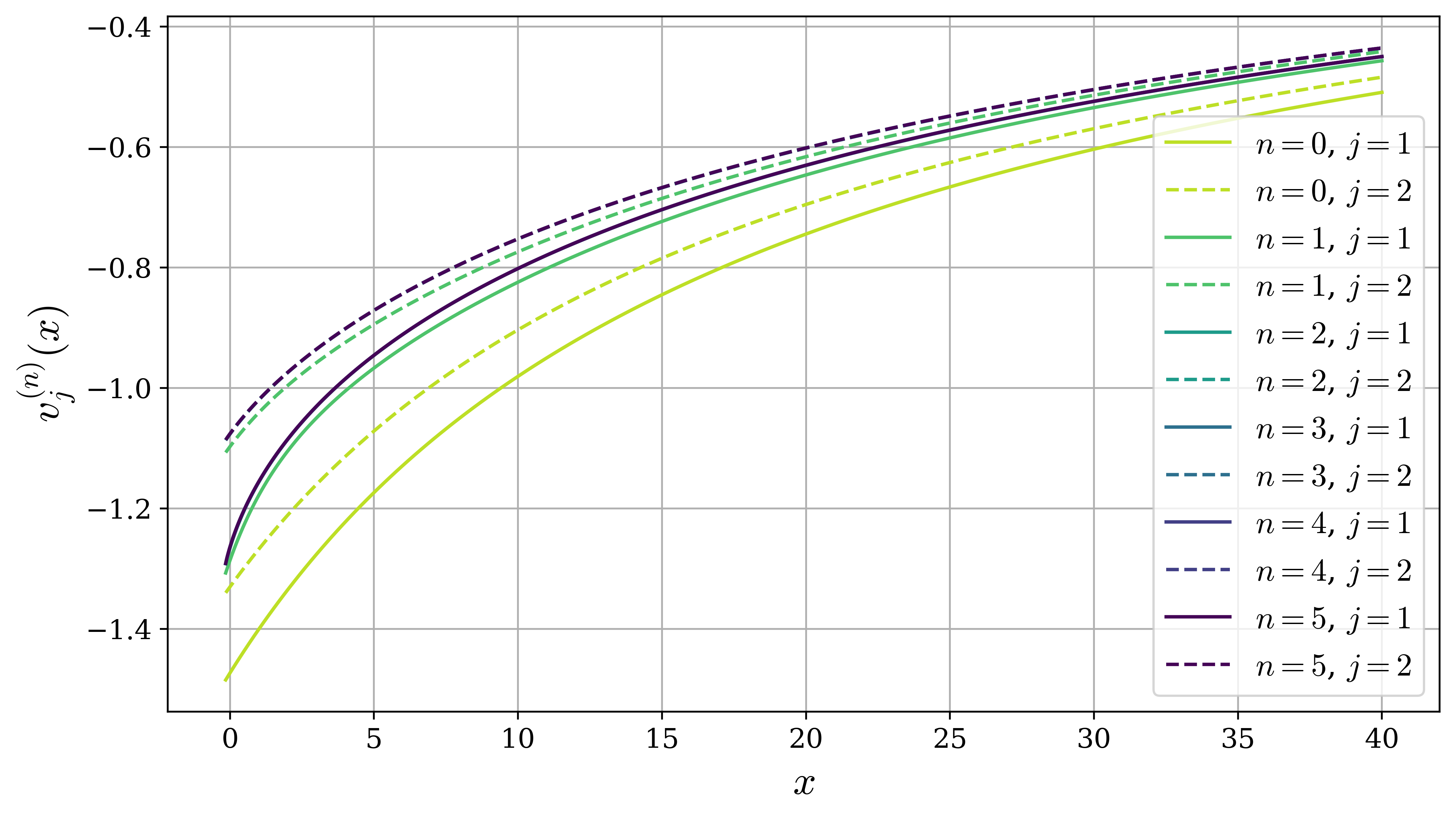}
\end{figure}
  \begin{figure}[h!]
   \centering
    \caption{\cref{HTK algo} constructs a non decreasing sequence of grid functions}\label{Fig: vk_HTK}
    \includegraphics[width=0.5\linewidth]{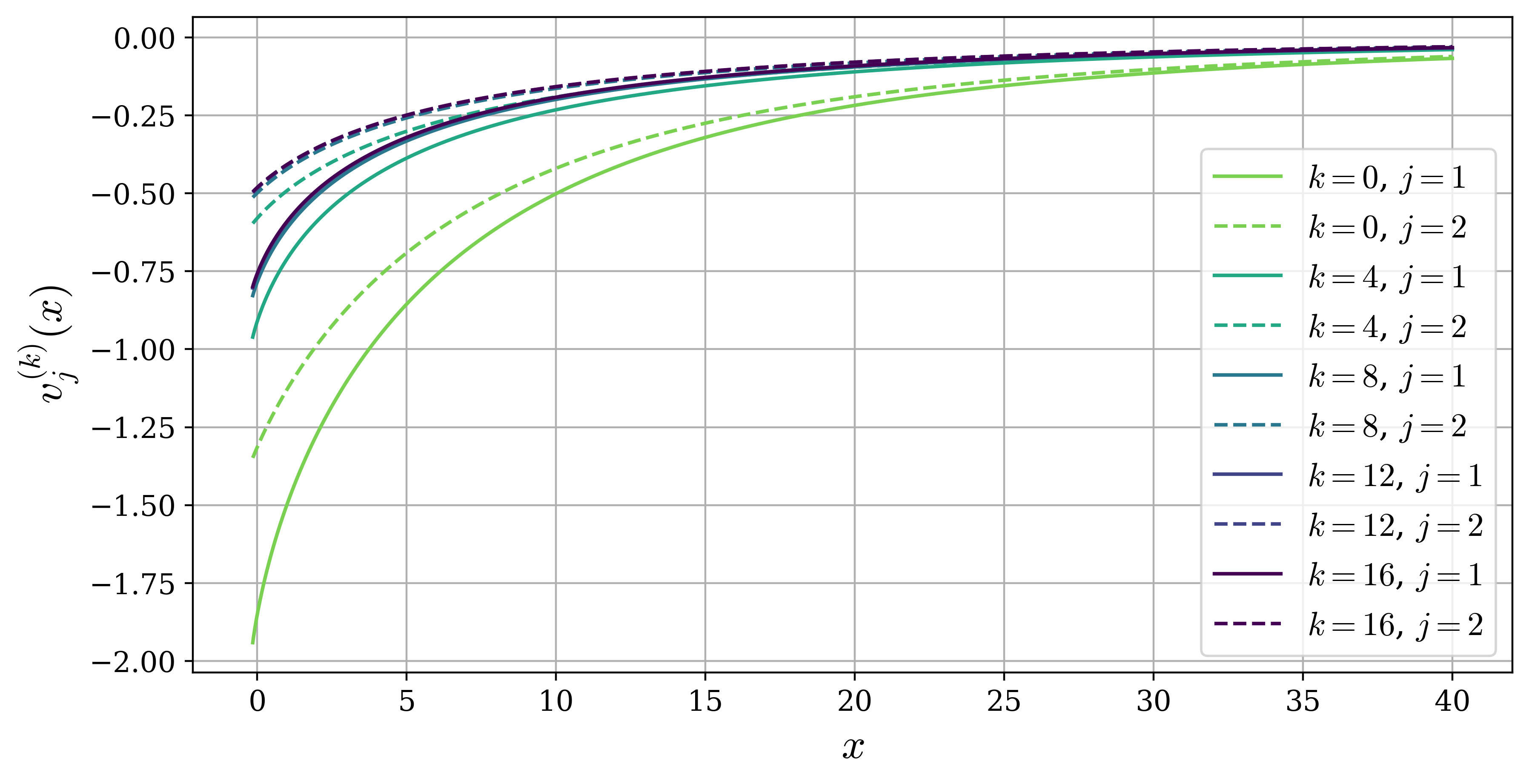}
\end{figure}
  \begin{figure}[h!]
   \centering
    \caption{\cref{HTK algo down} constructs a non increasing sequence of grid functions}\label{Fig: vk_HTKdown}
    \includegraphics[width=0.5\linewidth]{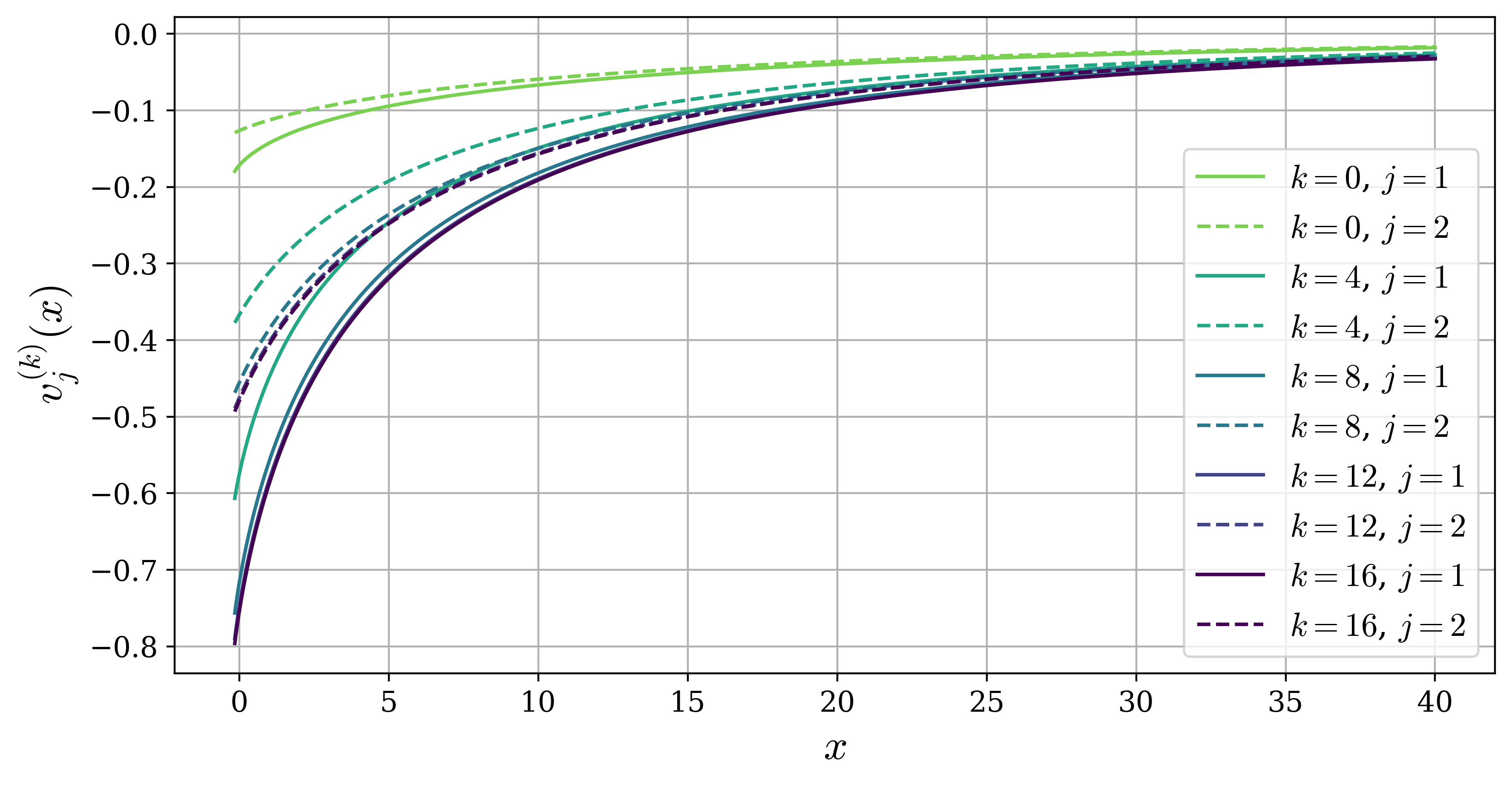}
\end{figure}

We now consider an additional {\textit{Test 4 HJB}} in which all parameters are identical to those in {\textit{Test 1 HJB}}, with the exception that $\lambda_2=0.02$. \cref{s14} illustrates the results of \cref{Prop: saving}. In particular,  we observe $s_2(\ux)=0$ in {\textit{Test 4 HJB}}. Such a behavior would not occur if \cref{rho-r2} is satisfied. In the case corresponding to {\textit{Test 4 HJB}}, the agents associated with $y_2$ do not have precautionary motive to save if the transition risk $\lambda_2$ is too small. Note that our scheme still works for {\textit{Test 4 HJB}}, although we have not analyzed this situation for brevity.\par
\begin{figure}[!h]
   \centering
    \caption{Saving policy in {\textit{Test 1 HJB}} (solid) and {\textit{Test 4 HJB}} (dotted)}\label{s14}
    \includegraphics[width=0.46\linewidth]{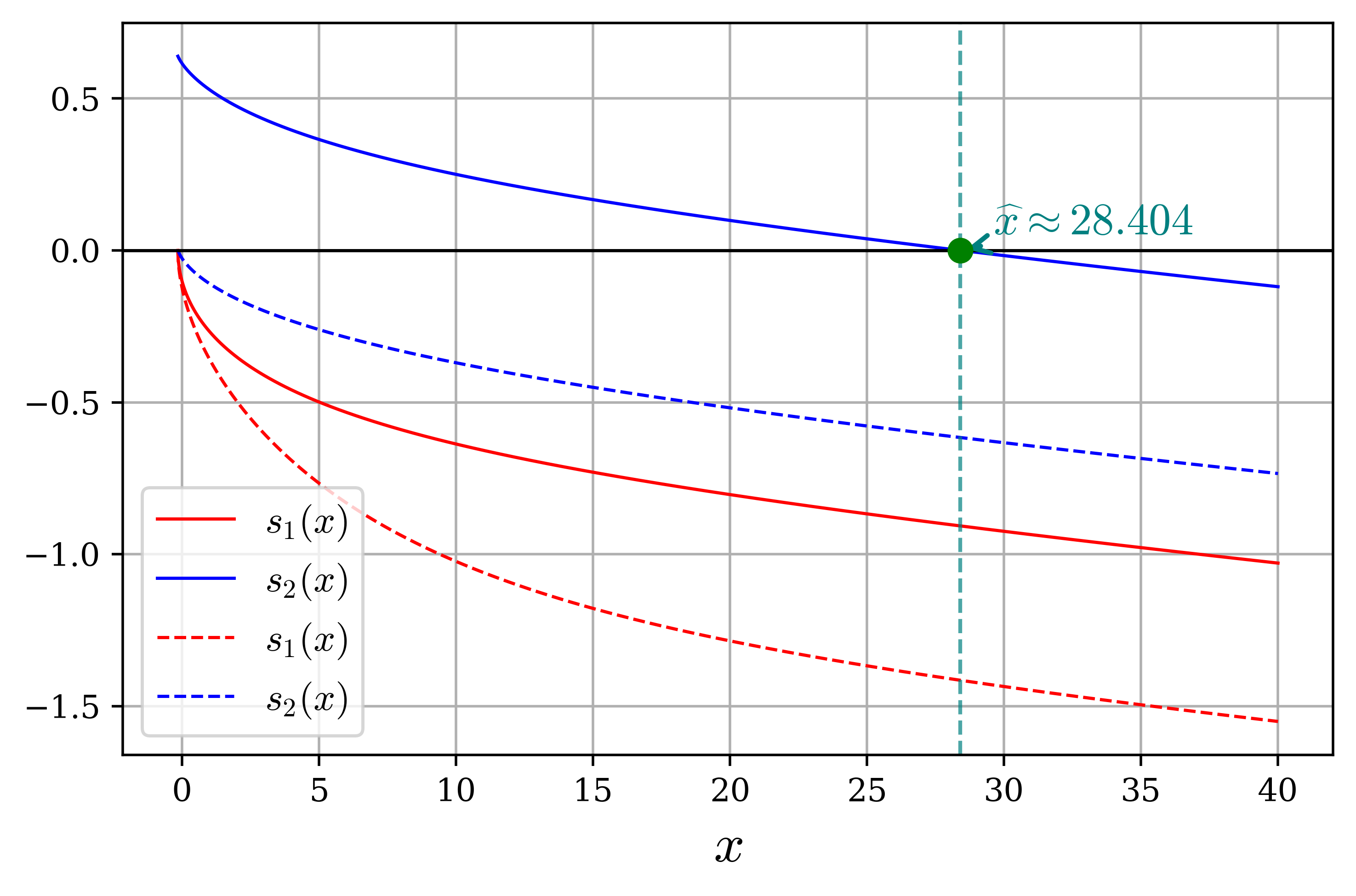}
\end{figure}
We next supplement the numerical results of {\textit{Test 1 HJB}} with a numerical estimation of the convergence rate illustrating  \cref{Subsec: rate}. Since the exact solution $(v_1, v_2)$ of the HJB system is not available in closed form,
we compute a reference solution $(V_1^{\text{ref}}, V_2^{\text{ref}})$ on a fine grid with
$\Delta x_{\text{ref}} = 0.001$ and treat it as a proxy for the true solution. We then solve the scheme on a sequence of ten coarser grids, with $\Delta x$ ranging from $0.002$ to $0.1$, spaced geometrically so that the grid spacings are uniformly distributed
on a log scale. For each coarse grid spacing $\Delta x > \Delta x_{\text{ref}}$, the reference
solution is interpolated onto the coarse grid $\mathcal{G}^{\Delta x}$ via the piecewise linear
interpolation $\mathbb{I}[\cdot]$. We use $e_j^{\D x}=\max_{j,\,x\in \mathcal{G}^{\D x}} \left\vert \mathbb{I}[V_j^{\text{ref}}](x)-V_{j}(x)\right\vert$ to substitute $\max_{j,\,x\in \mathcal{G}^{\D x}} \left\vert v_j(x)-V_{j}(x)\right\vert$. \par
\cref{Fig: rate} displays errors as functions of
$\Delta x$ on a log-log scale, together with a reference line of slope $1$. Both error curves decrease monotonically as $\Delta x \to 0$, confirming the convergence of the
scheme. Moreover, the curves run parallel to the slope-$1$ reference line, consistent with the theoretical bound established in \cref{Thm: convergence rate}.
\begin{figure}[!h]
   \centering
    \caption{convergence rate in {\textit{Test 1 HJB}}}\label{Fig: rate}
    \includegraphics[width=0.6\linewidth]{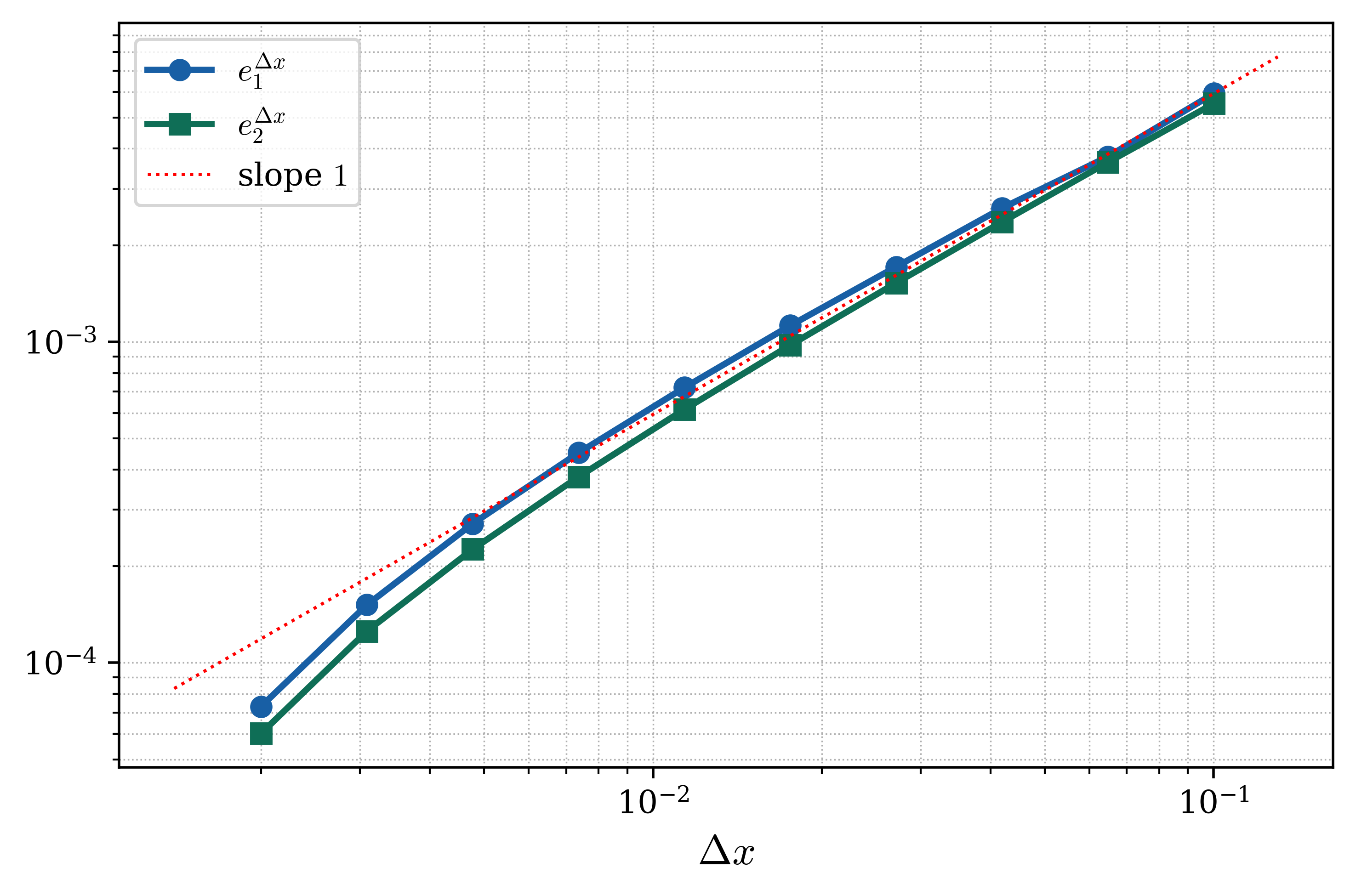}
\end{figure}\\
\noindent{\bf{Acknowledgment.}} \small{Yves Achdou was  partially supported by the ANR (Agence Nationale de la Recherche) through project ANR-22-CE40-0010-01 and  by the chair Finance and Sustainable Development and FiME Lab (Institut Europlace de Finance).\par
Part of this paper was written while Qing Tang was a visitor at the Laboratoire Jacques-Louis Lions, Universit\'e Paris Cit\'e, supported by a China Scholarship Council grant No. 202406410221.}
\bibliographystyle{siam} 
\bibliography{Computating_HA}

	\end{document}